\documentclass[12pt]{article}
\usepackage{amsmath,amsfonts,amssymb,amsthm,amscd}
\title{Galois theory, functional Lindemann-Weierstrass, and  Manin maps.}
\date{February 6,  2015}
\author{Daniel Bertrand \thanks{Thanks to the ANR-09-BLAN-0047 for support, as well as MSRI, Berkeley}\\Universit\'e P. \& M. Curie \and Anand Pillay
\thanks{Thanks to  EPSRC grant EP/I002294/1,  MSRI, Berkeley, as well as NSF grant DMS-1360702} 
\\University of Notre Dame }
\newtheorem{Theorem}{Theorem}[section]
\newtheorem{Proposition}[Theorem]{Proposition}
 
\newtheorem{Remark}[Theorem]{Remark}
\newtheorem{Lemma}[Theorem]{Lemma}
\newtheorem{Corollary}[Theorem]{Corollary}
\newtheorem{Fact}[Theorem]{Fact}

\newtheorem{Conjecture}[Theorem]{Conjecture}
   
\newcommand{\Q}{\mathbb Q}  
\newcommand{\Z}{\mathbb Z}

\newcommand{\C}{\mathbb C}

\newcommand{\Ga}{\mathbb G}

\def\mapright#1{\smash{\mathop{\longrightarrow}\limits^{#1}}}
\begin{document}
\maketitle

\begin{abstract}
We prove several new results of  Ax-Lindemann
type for semi-abelian varieties over the algebraic closure $K$ of $\C(t)$, making heavy use of  the   Galois theory of logarithmic differential equations.
Using related techniques, we also give a generalization of the theorem of the kernel for 
 abelian varieties over $K$. This paper is a continuation of \cite{LW} as well as an elaboration on the methods of Galois descent introduced in \cite{Bertrand-Fourier} and \cite{Bertrand-descent}. 
\end{abstract}

\tableofcontents

\section{Introduction}
 The paper has three  related themes,
the common feature being differential Galois theory and its applications. 

\vspace{2mm}

 Firstly, given a semiabelian variety $B$ over the algebraic closure $K$ of $\C(t)$, a $K$-rational point $a$ of the Lie algebra $LG$ of its universal vectorial extension $G = \tilde B$, and a solution $y \in G(K^{diff})$ of the logarithmic differential equation 
 $$\partial\ell n_{G}(y) = a, ~a \in LG(K),$$ 
we want to describe $tr.deg(K^{\sharp}_{G}(y)/K^{\sharp}_{G})$ in terms of ``gauge transformations" {\em over $K$}. Here  $K^{\sharp}_{G}$ is the differential field generated over $K$ by solutions in $K^{diff}$ of $\partial\ell n_{G}(-) = 0$.
 Introducing this field as base presents both advantages and difficulties. On the one hand, it allows us to use the differential Galois theory developed by the second author in  \cite{DGTI}, \cite{DGTII}, \cite{Pillay-DGTIV}, thereby replacing the study of transcendence degrees by the computation of a Galois group. On the other hand, we have only a partial knowledge of the extension $K^{\sharp}_{G}/K$. However, it was observed by the first author in \cite{Bertrand-Fourier}, \cite{Bertrand-descent} that in the case of an abelian variety, what we do know essentially suffices to perform a Galois descent from $K^{\sharp}_{G}$ to the field $K$ of the searched-for gauge transform.  In \S 2.2 and \S 3 of the present paper, we extend this principle to  semi-abelian varieties $B$ whose  toric part   is 
$\Ga_{m}$,  and give a definitive description of  $tr.deg(K^{\sharp}_{G}(y)/K^{\sharp}_{G})$ when $B$ is an abelian variety.

The main application we have in mind of these Galois theoretic results forms the second theme of our paper, and concerns Lindemann-Weierstrass statements  for the semiabelian variety $B$ over $K$, by which we mean the description  of the transcendence degree of $exp_{B}(x)$ where $x$ is a $K$-rational point of the Lie algebra $LB$ of $B$. The problem is covered  in the above setting by choosing as data
$$a := \partial_{LG}(\tilde x) \in \partial_{LG}(LG(K)),$$
where $\tilde x$ is an arbitrary $K$-rational lift of $x$ to $G = \tilde B$. This study was initiated in our joint paper \cite{LW}, where the Galois approach was mentioned, 
but only under the hypothesis that $K^{\sharp}_{G} = K$, described as {\it $K$-largeness} of $G$. There are natural conjectures in analogy with the well-known ``constant" case (where $B$ is over $\C$), although as pointed out in \cite{LW}, there are also counterexamples provided by nonconstant extensions of a constant elliptic curve by the multiplicative group. In \S 2.3 and \S  4 of the paper,  we extend the main result of \cite{LW} to the base $K^{\sharp}_{G}$, but assuming the toric part of $B$ is at most $1$-dimensional. Furthermore, we give in this case a full solution of the Lindemann-Weierstrass statement when the abelian quotient of $B$ too is $1$-dimensional. This  uses results from \cite{BMPZ} which deal with the ``logarithmic" case.   In this direction, 
we will also formulate an ``Ax-Schanuel"  type conjecture for abelian varieties over $K$. 

\vspace{2mm}

The third theme of the paper concerns the ``theorem of the kernel", which we generalize in \S 2.4 and \S 5 by proving that linear independence with respect to $End(A)$ of points  $y_{1},.., y_{n}$ in  $A(K)$ implies linear independence of $\mu_{A}(y_{1})$,..., $\mu_{A}(y_{n})$ with respect to $\C$ (this
answers a question posed to us by Hrushovski). 
 Here $A$ is an 
abelian variety over $K = \C(t)^{alg}$ with $\C$-trace $0$ and $\mu_{A}$ is the differential-algebraic  Manin map.  However, we will give an example showing that its $\C$-linear extension $\mu_{A}\otimes 1$ on $A(K)\otimes_{\Z}\C$ is not always injective. In contrast, we observe that the $\C$-linear extension $M_{K,A}\otimes 1$ of the classical (differential-arithmetic) Manin map $M_{K,A}$ is always injective.  Differential Galois theory and the logarithmic case of nonconstant Ax-Schanuel are  involved in the proofs.

\section{Statements of results }

\subsection{Preliminaries on logarithmic equations}

We will here give a quick background to the basic  notions and objects so as to be able to state our main results in the next subsections. The remaining parts 3, 4, 5  of the paper are devoted to the proofs. We refer the reader  to \cite{LW} for more details including differential algebraic preliminaries. 

\vspace{2mm}

We  fix a  differential field $(K,\partial)$   of characteristic $0$ whose field of constants $C_{K}$ is algebraically closed (and  can often be assumed to be $\C$).  We usually 
assume that  $K$ is algebraically closed, and denote by  $K^{diff}$ the differential closure of $K$.
We let $\cal U$ denote a ``universal" differential field containing $K$, with constant field $\cal C$.  If $X$ is an algebraic variety over $K$ we will identify $X$ with its set $X(\cal U)$ of $\cal U$ points, unless we say otherwise.

We start with algebraic $\partial$-groups, which provide the habitat of the (generalised) differential Galois theory of \cite{DGTI}, \cite{DGTII}, \cite{Pillay-DGTIV} discussed later on.  A (connected) {\it algebraic $\partial$-group} over $K$ is a (connected) algebraic group $G$ over $K$ together with a lifting $D$ of the derivation $\partial$ of $K$ to a derivation of the structure sheaf ${\cal O}_{G}$ which respects the group structure.  The derivation $D$ identifies with a regular homomorphic section $s$, not of the tangent bundle of $G$, but of a certain shifted tangent bundle $T_{\partial}(G)$ over $G$,  locally defined by equations $\sum_{i=1,..n}\partial P/\partial x_{i}(\bar x)u_{i} + P^{\partial}(\bar x)$, for polynomials  $P$ in the ideal of $G$, where $P^{\partial}$ is obtained by applying the derivation $\partial$ of $K$ to the coefficients of $P$. This $T_{\partial}(G)$ is itself a (connected) algebraic group over $K$. 

 We write the algebraic $\partial$-group as $(G,D)$ or $(G,s)$. 
Not every algebraic group over $K$ has a $\partial$-structure. But when $G$ is defined over the constants $C_{K}$ of $K$, there is a privileged $\partial$-structure $s_{0}$ on $G$ which is precisely the $0$-section of $TG = T_{\partial}G$.  
Given an algebraic $\partial$-group $(G,s)$ over $K$ we obtain an associated ``logarithmic derivative" $\partial \ell n_{G,s}(-)$ from $G$ to the Lie algebra $LG$ of $G$: $\partial\ell n_{G,s}(y) = \partial(y)s(y)^{-1}$, where the product is computed in the algebraic group $T_{\partial}(G)$. This is a differential rational crossed homomorphism from $G$ onto $LG$ (at the level of $\cal U$-points or points in a differentially closed field)  defined over $K$. Its kernel $Ker(\partial\ell n_{G,s})$ is a differential algebraic subgroup of $G$ which we denote $(G,s)^{\partial}$, or simply $G^\partial$ when the context is clear. Now $s$ equips  
the Lie algebra $LG$ of $G$ with its own structure of a $\partial$-group (in this case a $\partial$-module) which we call $\partial_{LG}$  (depending on $(G,s)$) and again the kernel is  denoted $(LG)^{\partial}$. 

In the case where $G$ is defined over $C_{K}$ and $s = s_{0}$, $\partial\ell n_{G,s}$  is precisely Kolchin's logarithmic derivative, taking $y\in G$ to $\partial(y)y^{-1}$. In general, as soon as $s$ is understood, we will abbreviate $\partial \ell n_{G,s}$ by $\partial\ell n_{G}$.

\vspace{2mm}

By a {\it logarithmic differential equation} over $K$ on the algebraic $\partial$-group $(G,s)$, we mean a differential equation $\partial\ell n_{G,s}(y) = a$ for some $a\in LG(K)$. 
When $G = GL_{n}$ and $s = s_{0}$  this is the equation for a fundamental system of solutions of a linear differential equation $Y' = aY$ in vector form. And more generally  for $G$ an algebraic group over $C_{K}$ and $s = s_{0}$ this is a logarithmic differential equation on $G$ over $K$ in the sense of Kolchin.  There is a well-known Galois theory here. In the given differential closure $K^{diff}$ of $K$,  any two solutions $y_{1}, y_{2}$ of $\partial\ell n_{G}(-) = a$, in $G(K^{diff})$, differ by an element in the kernel $G^\partial$ of $\partial\ell n_{G}(-)$. But $G^\partial(K^{diff})$  is precisely $G(C_{K})$. Hence $K(y_{1}) = K(y_{2})$. In particular $tr.deg(K(y)/K)$ is the same for all solutions $y$ in $K^{diff}$. Moreover $Aut(K(y)/K)$ has the structure of an algebraic subgroup of $G(C_{K})$: for any $\sigma \in Aut(K(y)/K)$, let $\rho_{\sigma}\in G(C_{K})$ be such that $\sigma(y) = y\rho_{\sigma}$. Then the map taking $\sigma$ to $\rho_{\sigma}$ is an isomorphism between $Aut(K(y)/K)$ and an algebraic subgroup $H(C_{K})$ of $G(C_{K})$, which we call the differential Galois group of $K(y)/K$.   This depends on the choice of solution $y$, but another choice yields a conjugate of $H$. Of course when $G$ is commutative, $H$ is independent of the choice of $y$. In any case $tr.deg(K(y)/K) = dim(H)$, so computing the differential Galois group gives us a transcendence estimate. 

Continuing with this Kolchin situation, we have the following well-known fact, whose proof we present
 in the  setting of the more general situation considered in Fact 2.2.(i).

\begin{Fact} [for $G/C_K$] Suppose  $K$ algebraically closed.  Then, $tr.deg(K(y)/K)$ is the dimension of a minimal connected algebraic subgroup $H$ of $G$, defined over $C_{K}$, such for some $g\in G(K)$, 
$gag^{-1} + \partial\ell n_{G}(g) \in LH(K)$. Moreover $H(C_{K})$ is the differential Galois group of $K(y)/K)$.
\end{Fact}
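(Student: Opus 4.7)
\emph{Proof plan.} Let $H \le G$ be the algebraic subgroup over $C_{K}$ determined by the differential Galois group, so that $H(C_{K}) = Aut(K(y)/K)$ and $\sigma(y) = y\rho_{\sigma}$ with $\rho_{\sigma} \in H(C_{K})$ for every $\sigma$. Since $K$ is algebraically closed it is relatively algebraically closed in $K(y)$, and the Galois correspondence identifies $H/H^{0}$ with the Galois group of the maximal algebraic subextension of $K(y)/K$; this subextension is trivial, so $H$ is connected.

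To verify the gauge property for this $H$, observe that $\sigma(y)H = y\rho_{\sigma}H = yH$ in $G/H$, so the coset $yH \in (G/H)(K^{diff})$ is fixed by every $\sigma \in Aut(K(y)/K)$. Its coordinates lie in $K(y)^{Aut(K(y)/K)} = K$ by the Galois correspondence, so $yH \in (G/H)(K)$. The fibre of $G \to G/H$ over $yH$ is a non-empty $K$-variety (an $H$-torsor), and since $K$ is algebraically closed it has a $K$-point $g_{0}$; then $g_{0}^{-1}y \in H(K^{diff})$. With $g := g_{0}^{-1}$, the cocycle identity
$$\partial \ell n_{G}(gy) = gag^{-1} + \partial \ell n_{G}(g)$$
puts the right-hand side in $LG(K)$ (since $a \in LG(K)$ and $g \in G(K)$), while the left-hand side is the intrinsic log-derivative of $gy \in H(K^{diff})$ and so lies in $LH(K^{diff})$. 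Their common value therefore lies in $LG(K) \cap LH(K^{diff}) = LH(K)$, which is the gauge condition.

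For minimality, suppose $H'$ is any connected $C_{K}$-subgroup of $G$ with $g_{1} \in G(K)$ satisfying $b := g_{1}ag_{1}^{-1} + \partial \ell n_{G}(g_{1}) \in LH'(K)$. Since $H'$ is defined over $C_{K}$, the logarithmic equation $\partial \ell n_{H'}(-) = b$ on $H'$ admits a solution $y'_{0} \in H'(K^{diff})$, and we may write $g_{1}y = y'_{0}c$ with $c \in G(C_{K})$. Applying any $\sigma$ and using $\sigma$-invariance of $g_{1}$ and $c$ gives $\sigma(y'_{0}) = y'_{0}(c\rho_{\sigma}c^{-1})$. But $\sigma(y'_{0})$ remains in $H'(K^{diff})$, so $c\rho_{\sigma}c^{-1} \in H'(K^{diff}) \cap G(C_{K}) = H'(C_{K})$ for every $\sigma$, hence $H \subseteq c^{-1}H'c$ and $\dim H \le \dim H'$. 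Combined with the general fact $tr.deg(K(y)/K) = \dim H$ from differential Galois theory, this gives the statement.

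The main technical point is the descent step producing $g_{0}$: it combines the Kolchin fixed-field theorem (to push $yH$ down from $K^{diff}$ to $K$) with the triviality of torsors over an algebraically closed field (to lift the $K$-point of $G/H$ to one of $G$); both uses of $K$ being algebraically closed are essential. This is also the step that will need to be generalised when $G$ and $s$ are no longer defined over $C_{K}$, as in Fact 2.2(i).
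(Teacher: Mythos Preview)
Your proof is correct and follows essentially the same line as the paper's: identify $H$ with the differential Galois group, show the coset $yH$ descends to $K$, and pick a $K$-rational representative $g_{0}$ (equivalently $g^{-1}$) in that coset to obtain the gauge transformation. The only real difference is in how the descent step is packaged: the paper invokes Kolchin's constrained cohomology (the $H^{\partial}$-orbit of $y$ is defined over $K$, hence has a $K$-point by \cite{DGTI}, Proposition~3.2), whereas you argue more directly that $yH\in(G/H)(K)$ via the fixed-field theorem and then use triviality of algebraic torsors over an algebraically closed field. In this constant-coefficient setting the two arguments are interchangeable; the paper's phrasing is chosen because it is the one that generalises cleanly to Fact~2.2(i). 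Your write-up is also more complete than the paper's sketch, since you spell out connectedness of $H$ and the minimality direction explicitly, both of which the paper leaves implicit.
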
 
\begin{proof} Let $H$ be a connected algebraic subgroup of $G$, defined over $C_{K}$ such that $H^\partial(K^{diff}) = H(C_{K})$ is the differential Galois group of $K(y)$ over $K$. Now the $H^\partial(K^{diff})$-orbit of $y$ is defined over $K$ in the differential algebraic sense, so the $H$-orbit of $y$ is defined over $K$ in the differential algebraic sense.  A result of Kolchin on constrained cohomology (see Proposition 3.2 of \cite{DGTI},
or Theorem 2.2 of \cite{Bertrand-descent}) implies that this orbit has a $K$-rational point 
$g^{-1}$. So, there exists $z^{-1}  \in H$ such that $g^{-1} = y z^{-1}$, and $z = gy$, which satisfies $K(y) = K(z)$,   is a solution of $\partial\ell n_{G}(-) = a'$ where $a' = gag^{-1} + \partial\ell n_{G}(g)$.
\end{proof}
\noindent
(Such a map $LG(K) \to LG(K)$ taking $a\in LG(K)$ to $gag^{-1}  + \partial\ell n_{G}(g)$ for some $g\in G(K)$ is called a gauge transformation.)

\vspace{2mm} 

Now in the case of an arbitrary algebraic $\partial$-group $(G,s)$ over $K$, and logarithmic differential equation $\partial\ell n_{G,s}(-) = a$ over $K$, two solutions $y_{1},y_{2}$ in $ 
G(K^{diff})$ differ by an element of $(G,s)^{\partial}(K^{diff})$ which in general may not be contained in $G(K)$.  So both to obtain a transcendence statement independent of the  choice 
of solution, as well as a Galois theory, we should work over $K^{\sharp}_{G,s}$ which is 
the (automatically differential) field generated by $K$ and $(G,s)^{\partial}
(K^{diff})$, whose algebraic closure in $K^{diff}$ will be denoted by $K^{\sharp�\; �alg}_{G,s}$. As with $\partial \ell n_G$ and $G^\partial$, we will abbreviate $K^{\sharp}_{G,s}$ as $K^\sharp_{G}$ , or even $K^{\sharp}$, when the context is clear, and similarly for its algebraic closure.  Fixing a solution $y\in G(K^{diff})$ of  $\partial\ell n_{G}(-) = a$, for $\sigma\in Aut(K^{\sharp}(y)/K^{\sharp})$, $\sigma(y) = y\rho_{\sigma}$ for unique 
$\rho_{\sigma}\in G^{\partial}(K^{diff}) = G^\partial(K^\sharp) \subseteq G(K^{\sharp})$, and again the map $\sigma \to \rho_{\sigma}$ defines an isomorphism between $Aut(K^{\sharp}(y)/K^{\sharp})$ 
and $(H,s)^{\partial}(K^{diff})$ for an algebraic $\partial$ subgroup $H$ of $(G,s)$, ostensibly defined over $K^{\sharp}$.  The $\partial$-group $H$  (or more properly $H^{\partial}$\;, or $H^{\partial}
(K^\sharp)$)  is called the {\it (differential) Galois group} of $K^{\sharp}(y)$ over $K^{\sharp}$, and when $G$ is commutative does not depend on the choice of $y$, just on the data $a \in LG(K)$ of the logarithmic equation, and in fact only on the image of $a$ in the cokernel $LG(K)/\partial \ell n_G G(K)$ of $\partial \ell n_G$.
Again $tr.deg(K^{\sharp}(y)/K^{\sharp}) = dim(H)$.
In any case, Fact 1.1 extends to this context with essentially
 the same proof.  This can also be extracted from  Proposition 3.4 of \cite{DGTI} and the set-up of  \cite{Pillay-DGTIV}. 
For the commutative case (part (ii) below) see  \cite{Bertrand-descent}, Theorem 3.2.

\begin{Fact} [for $G/K$]  Let $y$ be a solution of $\partial\ell n_{G,s}(-) = a$ in $G(K^{diff})$, and let $K^\sharp = K(G^\partial)$, with algebraic closure $K^{\sharp \; alg}$. Then 

(i)  $tr.deg(K^{\sharp}(y)/K^{\sharp})$
 is the dimension of a minimal connected algebraic $\partial$-subgroup $H$ of $G$, defined over $K^{\sharp \; alg}$ such that  
 $gag^{-1} +  \partial\ell n_{G,s}(g) \in LH(K^{\sharp \; alg})$ for some $g\in G(K^{\sharp \; alg})$. And $H^{\partial}(K^{\sharp \; alg})$
 is the differential Galois group of $K^{\sharp \; alg}(y)/K^{\sharp \; alg}$. 

(ii) Suppose that $G$ is commutative. Then, the identity component of the differential Galois group of $K^{\sharp}(y)/K^{\sharp}$ is $H^\partial(K^\sharp)$, where $H$ is the smallest algebraic $\partial$-subgroup  of   $G$ defined over $K^\sharp$ such that $a \in LH + \Q.\partial \ell n_{G,s} G(K^\sharp)$.
\end{Fact}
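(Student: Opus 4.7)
The plan is to transpose the proof of Fact 1.1 to the generalized setting of Pillay's differential Galois theory for algebraic $\partial$-groups—this handles (i) almost verbatim—and then, for (ii), to perform a finite Galois descent from $K^{\sharp \; alg}$ down to $K^\sharp$, exploiting commutativity of $G$; as indicated in the statement, the descent step is essentially Theorem 3.2 of \cite{Bertrand-descent}, which I would simply quote.

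For (i), invoke \cite{DGTI,Pillay-DGTIV} to identify $Aut(K^{\sharp \; alg}(y)/K^{\sharp \; alg})$ with $(H,s)^\partial(K^{diff})$ for a connected algebraic $\partial$-subgroup $H$ of $(G,s)$ defined over $K^{\sharp \; alg}$, noting $\dim H = tr.deg(K^{\sharp \; alg}(y)/K^{\sharp \; alg}) = tr.deg(K^\sharp(y)/K^\sharp)$. The $H^\partial$-orbit of $y$ is a differential-algebraic torsor defined over $K^{\sharp \; alg}$, whose Zariski closure is the algebraic coset $yH$. Kolchin's constrained cohomology vanishing for $H^\partial$ over the algebraically closed differential field $K^{\sharp \; alg}$ (Proposition 3.2 of \cite{DGTI}, or Theorem 2.2 of \cite{Bertrand-descent}) produces a $K^{\sharp \; alg}$-rational point in that torsor; concretely, some $g \in G(K^{\sharp \; alg})$ satisfies $z := gy \in H$, and the cocycle identity for $\partial\ell n_{G,s}$ yields
\[ \partial\ell n_{G,s}(z) \;=\; g a g^{-1} + \partial\ell n_{G,s}(g) \;\in\; LH(K^{\sharp \; alg}). \]
Minimality of $\dim H$ is the reverse implication: any competing $H'$ with an analogous witness $g'$ makes $g'y$ a solution of a logarithmic equation on $H'$, so the Galois group embeds in $H'^\partial$ and $\dim H \leq \dim H'$.

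For (ii), commutativity turns $\partial\ell n_{G,s}$ into a group homomorphism on $\cal U$-points, so $\partial\ell n_{G,s} G(K^\sharp)$ is a subgroup of $LG(K^\sharp)$. Now apply Galois descent along $\Gamma := Gal(K^{\sharp \; alg}/K^\sharp)$ to the $H$ of (i): each conjugate $\gamma H$ has the same dimension and the analogous gauge property witnessed by $\gamma g$, so replacing $H$ by the connected subgroup $H + \gamma H$ (still witnessed by commutativity) and invoking minimality forces $\gamma H = H$. Hence $H$ is $\Gamma$-stable and descends to $K^\sharp$. Averaging in the commutative group law gives $g^\ast := \sum_{\gamma\in\Gamma}\gamma g \in G(K^\sharp)$, and summing the gauge identity produces $|\Gamma|\,a + \partial\ell n_{G,s}(g^\ast)\in LH(K^\sharp)$, whence
\[ a \;\in\; LH + \Q\cdot \partial\ell n_{G,s} G(K^\sharp), \]
the source of the $\Q$-coefficients. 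Since $K^{\sharp \; alg}/K^\sharp$ is algebraic, the identity component of $Aut(K^\sharp(y)/K^\sharp)$ matches $Aut(K^{\sharp \; alg}(y)/K^{\sharp \; alg}) = H^\partial(K^\sharp)$, completing the proof.

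The main obstacle is this descent step: one must simultaneously descend $H$ and the gauge $g$ from $K^{\sharp \; alg}$ to $K^\sharp$, preserve minimality, and accept that division by $|\Gamma|$ inside $G(K^\sharp)$ is generally unavailable—which forces the $\Q$-coefficients rather than a cleaner condition $a \in LH + \partial\ell n_{G,s} G(K^\sharp)$. This is the descent machinery of \cite{Bertrand-Fourier,Bertrand-descent}; it crucially requires commutativity of $G$, without which no such averaging is available.
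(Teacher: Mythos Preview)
Your treatment of part (i) is correct and matches the paper exactly: the paper states that ``Fact 1.1 extends to this context with essentially the same proof,'' and your argument via the $H^\partial$-orbit of $y$, Kolchin's constrained cohomology, and the cocycle identity for $\partial\ell n_{G,s}$ is precisely that extension.

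For part (ii), both you and the paper defer to Theorem 3.2 of \cite{Bertrand-descent}, so at the level of citation there is no discrepancy. However, the descent sketch you offer has a genuine gap: you set $\Gamma := Gal(K^{\sharp\;alg}/K^\sharp)$ and then form $g^\ast := \sum_{\gamma\in\Gamma}\gamma g$ and multiply by $|\Gamma|$, but $K^{\sharp\;alg}$ is the full algebraic closure of $K^\sharp$, so $\Gamma$ is profinite and infinite---neither the sum nor $|\Gamma|$ makes sense as written. The fix is straightforward: once $H$ is known to be defined over $K^\sharp$, the gauge element $g$ lies in $G(L)$ for some finite Galois extension $L/K^\sharp$, and one averages over $Gal(L/K^\sharp)$ instead. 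Your argument that $\gamma H = H$ via ``replacing $H$ by $H + \gamma H$ and invoking minimality'' is also unclear as stated (a larger group having the gauge property does not contradict minimality of $H$); the clean reason, used implicitly in the paper's later Claim in the proof of Proposition 2.5, is that in the commutative case the Galois group is a \emph{uniquely determined} subgroup of $G$ (not merely up to conjugacy), hence automatically invariant under any automorphism fixing the data $a$ and $K^\sharp$.
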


 We point out  that when $G$ is commutative, then in Fact 2.1 and 2.2,  the Galois group, say $H'$, of $K^\sharp(y)/K^\sharp$ is   a unique subgroup of $G$, so its identity component $H$ must indeed be  the smallest algebraic subgroup of $G$ with the required properties (see also \cite{Bertrand-descent}, \S 3.1). Of course, $H'$ is automatically connected in 2.2.(i), where the base $K^{\sharp \; alg}$ is algebraically closed, but our proofs in \S 3 will require an appeal to 2.2.(ii). Now, in this commutative case, the map $\sigma \rightarrow \rho_\sigma$ described above depends $\Z$-linearly on $a$. So, if $N = [H':H]$ denotes the number of connected components of $H'$, then replacing $a$ by $Na$ turns the Galois group into a connected algebraic group, without modifying $K^\sharp$ nor $tr.deg(K^{\sharp}(y)/K^{\sharp}) =  tr.deg(K^{\sharp}(Ny)/K^{\sharp})$. Therefore, in the computations of Galois groups later on, we will tacitly replace $y$ by $Ny$ and determine the connected component $H$ of $H'$. But it turns out that in all cases under study, we can then assume that $y$ itself lies in $H$, so the Galois group $H'$ of $K^{\sharp}(y)/K^{\sharp}$  coincides with $H$ and will in the end always be connected   
\footnote{~We take opportunity of this paper to mention two errata in \cite{Bertrand-descent} : in the proof of its Theorem 3.2, 
replace ``of finite index" by ``with quotient of finite exponent";  in the proof of Theorem 4.4, use the reduction process described above to justify that the Galois group is indeed connected.}.

\subsection{Galois theoretic results}

The question which we deal with in this paper  is when and whether in Fact 2.2, it suffices to consider $H$ defined over $K$ and $g\in G(K)$. In fact it is not hard to see that the Galois group is defined over $K$, but the second point is problematic. The case where $(G,s)$ is a $\partial$-module, namely $G$ is a vector space $V$, and the logarithmic derivative $\partial\ell n_{G,s}(y)$ has the form $\nabla_{V}(y) = \partial y - By$ for some $n\times n$ matrix $B$ over $K$, was considered in  \cite{Bertrand-unipotent}, and shown to provide counterexamples, unless   the $\partial$-module $(V,\nabla_{V})$ is semisimple. The rough idea is that the Galois group $Gal(K^\sharp_V/K)$ of $\nabla_V$  is then reductive, allowing an argument of  Galois descent from $K^\sharp_V$ to $K$ to  
construct a $K$-rational gauge transformation $g$. The argument was extended in  \cite{Bertrand-Fourier}, \cite{Bertrand-descent} to $\partial$-groups $(G,s)$ attached to abelian varieties, which by Poincar\'e reducibility, are in a sense again semi-simple.

\smallskip 

We will here focus on  the {\it almost semiabelian} case: namely certain $\partial$-groups attached to semiabelian varieties, which provide the main source of non semi-simple situations. 
If $B$ is a semiabelian variety over $K$, then $\tilde B$, the universal vectorial extension of $B$, is a (commutative) algebraic group over $K$ which has a {\em unique} algebraic $\partial$-group structure.  Let $U$ be any unipotent algebraic $\partial$-subgroup of $\tilde B$. Then $\tilde B/U$ with its unique $\partial$-group structure is what we mean by an almost semiabelian $\partial$-group over $K$. When $B$ is an abelian variety $A$ we call $\tilde A/U$ an almost abelian algebraic $\partial$-group over $K$.  
If $G$ is an almost semiabelian algebraic $\partial$-group over $K$, then because the $\partial$-group structure $s$ on $G$ is unique, the abbreviation $K^{\sharp}_{G}$  for  $K^{\sharp}_{G,s}$ is  now unambiguous. 
We found no obstruction for the following to be true, where for different reasons we take $K$ to be $\C(t)^{alg}$  (in fact the algebraic closure of a function field in one variable over the constants is enough).

\begin{Conjecture}  Let $G$ be an almost semiabelian $\partial$-group over $K = \C(t)^{alg}$. Let $a\in LG(K)$, and $y\in G(K^{diff})$ be such that $\partial\ell n_{G}(y) = a$.
Then $tr.deg(K^{\sharp}_{G}(y)/K^{\sharp}_{G})$ is the dimension of the smallest algebraic $\partial$-subgroup $H$ of $G$ defined over $K$ such that $a\in LH + \partial\ell n_{G}(G(K))$,  i.e. $a + \partial\ell n_{G}(g)\in LH(K)$ for some $g\in G(K)$ . Equivalently the smallest algebraic $\partial$-subgroup $H$ of $G$, defined over $K$, such that $y\in H + G(K) + G^{\partial}(K^{diff})$. Moreover $H^{\partial}(K^{diff})$ is  
the Galois group of $K^{\sharp}_{G}(y)$ over $K^{\sharp}_{G}$. 
\end{Conjecture}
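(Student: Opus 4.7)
The natural starting point is Fact 2.2(ii), which identifies the identity component of the Galois group of $K^{\sharp}_{G}(y)/K^{\sharp}_{G}$ with $\tilde H^{\partial}$, where $\tilde H$ is the smallest algebraic $\partial$-subgroup of $G$ \emph{defined over $K^{\sharp}_{G}$} such that $a\in L\tilde H+\Q\cdot\partial\ell n_{G}(G(K^{\sharp}_{G}))$. After replacing $a$ by a suitable multiple $Na$ we may assume the Galois group is connected and that $y\in\tilde H$, as explained in the discussion following Fact 2.2. Let $H$ denote the conjectured smallest algebraic $\partial$-subgroup of $G$ defined over $K$ with $a\in LH+\partial\ell n_{G}(G(K))$. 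The inclusion $\tilde H\subseteq H$ is automatic; the task is to prove the reverse inclusion, which splits into two descents: (i) $\tilde H$ is in fact defined over $K$, and (ii) the $K^{\sharp}_{G}$-rational gauge witnessing $a\in L\tilde H+\partial\ell n_{G}(G(K^{\sharp}_{G}))$ may be taken $K$-rational.

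Step (i) is easy. Any $\sigma \in Aut(K^{\sharp \; alg}_{G}/K)$ fixes $a \in LG(K)$, so $\sigma(\tilde H)$ inherits the minimality property characterising $\tilde H$; hence $\sigma(\tilde H)=\tilde H$ and $\tilde H$ is defined over $K$. Rename $\tilde H$ as $H$. For step (ii), write $a=b+\partial\ell n_{G}(g^{\sharp})$ with $b\in LH(K^{\sharp}_{G})$ and $g^{\sharp}\in G(K^{\sharp}_{G})$. For $\sigma\in Aut(K^{\sharp \; alg}_{G}/K)$ the cocycle $c(\sigma):=\sigma(g^{\sharp})-g^{\sharp}$ satisfies $\partial\ell n_{G}(c(\sigma))=\sigma(b)-b\in LH$, so its reduction $\bar c$ modulo $H$ takes values in $(G/H)^{\partial}(K^{\sharp \; alg}_{G})$. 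If $\bar c$ is a coboundary, say $\bar c(\sigma)=\sigma(\bar h)-\bar h$ for some $\bar h\in (G/H)(K^{\sharp \; alg}_{G})$, then lifting $\bar h$ to $\tilde h^{\sharp}\in G(K^{\sharp}_{G})$ makes $g^{\sharp}-\tilde h^{\sharp}$ invariant modulo $H(K^{\sharp}_{G})$, so of the form $g+h^{\sharp}$ with $g\in G(K)$ and $h^{\sharp}\in H(K^{\sharp}_{G})$. Substituting, $a-\partial\ell n_{G}(g)=b+\partial\ell n_{G}(h^{\sharp})\in LH(K^{\sharp}_{G})\cap LG(K)=LH(K)$, yielding the required $K$-rational gauge.

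The principal obstacle is thus the triviality of the cocycle $\bar c$ in the differential algebraic group $(G/H)^{\partial}$ over $K^{\sharp \; alg}_{G}/K$. When $G$ is almost abelian, Poincar\'e reducibility reduces this to cocycles in the Manin kernel of a product of simple abelian varieties, and Kolchin's theorem on constrained cohomology (used already in the proof of Fact 2.1) trivialises them; this is what should drive the proof of the conjecture in the abelian case treated in \S\S 3--4. In the genuinely semiabelian case, however, $G/H$ may carry a non-semisimple $\partial$-module piece originating from the toric part of $B$, and \cite{Bertrand-unipotent} shows that cocycles in such unipotent $\partial$-modules need not be coboundaries. The toric-rank-one case of \S 3 can still be handled because the single unipotent extension can be controlled explicitly; a uniform proof would require a much finer structural description of $K^{\sharp}_{G}/K$ in higher toric rank, and this non-semisimple cohomological obstruction is the anticipated bottleneck preventing a full resolution of Conjecture 2.3.
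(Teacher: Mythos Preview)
Your overall architecture—descend the Galois group to $K$, then descend the gauge via a cocycle argument—does match the shape of the paper's argument in the cases it actually settles (Proposition 2.5 for almost abelian $G$; Theorems 2.6 and 2.7 under toric and semiconstancy restrictions). You are also right that the conjecture remains open in general and that the failure of semisimplicity of $K^{\sharp}_{G}/K$ in the genuinely semiabelian case is where things break down.

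However, there are two substantive divergences from the paper's proof in the almost abelian case, and the second indicates that you have misidentified the mechanism driving the descent.

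On step (i): the paper does not use $Aut(K^{\sharp\; alg}_{G}/K)$. It argues via $Aut(K^{diff}/K)$, model-theoretic homogeneity of $K^{diff}$ over $K$, and elimination of imaginaries in $DCF_{0}$: the set $H_{1}^{\partial}(K^{diff})$ is $Aut(K^{diff}/K)$-invariant because $a\in LG(K)$ and $K^{\sharp}$ is setwise invariant, hence $K$-definable. Your version presupposes that $K^{\sharp}_{G}/K$ is Galois in some usable sense, which is part of what must be established.

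On step (ii), more seriously: the tool that trivialises the cocycle in the almost abelian case is \emph{not} Kolchin's constrained cohomology. That result (invoked in the proof of Fact 2.1) finds a $K$-rational point on a $K$-definable torsor; it does not address cocycles from $Gal(K^{\sharp}/K)$ into $(G/H)^{\partial}$. The paper's argument rests instead on Fact 3.1 and Remark 3.2: for $G=\tilde A$ one has $K^{\sharp}_{\tilde A}=K^{\sharp}_{U_{A}}$, a \emph{Picard--Vessiot} extension of $K$ whose Galois group $J(\C)$ is a \emph{semisimple} linear algebraic group. The proof then splits into two cases according to whether $H$ and $H_{1}$ have the same image in $A$. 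In Case (I) the quotient $H/H_{1}$ is identified with a unipotent $\partial$-group $\bar V$; the cocycle lands in the rational $J(\C)$-representation $\bar V^{\partial}(K^{diff})$, and complete reducibility for reductive groups forces $H^{1}(J(\C),\bar V^{\partial})=\mathrm{Ext}_{J(\C)}(1,\bar V^{\partial})=0$. In Case (II) one passes to a quotient isomorphic to $\overline{B_{2}}$ for a simple abelian variety $B_{2}$, and the contradiction comes from a ``no nontrivial homomorphism from a semisimple linear group to $B_{2}(\C)$ or to $\overline{B_{2}}^{\partial}$'' argument—not from cohomological vanishing at all. Your sketch collapses this dichotomy and attributes both halves to a single principle that does not apply. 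The semisimplicity of $J$ is the crux, and it is exactly this structural input that becomes unavailable once a nontrivial torus is present; see Remark 3.3 and the discussion in \S 3.3 of the paper.
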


The conjecture can be restated as: there is a smallest algebraic $\partial$-subgroup $H$  of $(G,s)$ defined over $K$ such that $a\in LH + \partial\ell n_{G}(G(K))$ and it coincides with the Galois group of $K^{\sharp}_{G}(y)$ over $K^{\sharp}_{G}$. In comparison with Fact 2.2.(ii), notice that since $K$ is algebraically closed, $\partial \ell n_G(G(K))$ is already a $\Q$-vector space, so we do not need to tensor with $\Q$ in the condition on $a$.

A corollary of Conjecture 2.3 is the following  special {\it generic  case}, where an additional assumption on non-degeneracy is made on $a$:

\begin{Conjecture}  Let $G$ be an almost semiabelian $\partial$-group over $K = \C(t)^{alg}$. Let $a\in LG(K)$, and $y\in G(K^{diff})$ be such that $\partial\ell n_{G}(y) = a$.
Assume that $a\notin LH+ \partial\ell n_{G} G(K)$ for any proper algebraic $\partial$-subgroup $H$ of $G$, defined over $K$ (equivalently $y\notin H + G(K) + G^{\partial}(K^{diff})$ for any proper algebraic $\partial$-subgroup of $G$ defined over $K$). Then $tr.deg(K^{\sharp}_{G}(y)/K^{\sharp}_{G}) = dim(G)$. 
\end{Conjecture}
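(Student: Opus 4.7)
The plan is to show that, under the non-degeneracy hypothesis on $a$, the differential Galois group of $K^{\sharp}_{G}(y)/K^{\sharp}_{G}$ is all of $G^{\partial}$, so that $tr.deg(K^{\sharp}_{G}(y)/K^{\sharp}_{G}) = \dim G$ by Fact 2.2. Since $G$ is commutative, Fact 2.2(ii) identifies the identity component of this Galois group with $H^{\partial}(K^{\sharp}_{G})$, where $H$ is the unique smallest algebraic $\partial$-subgroup of $G$ defined over $K^{\sharp}_{G}$ with $a \in LH + \Q\cdot \partial\ell n_{G} G(K^{\sharp}_{G})$. The target is thus to prove $H = G$.

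The first step is a Galois descent of $H$ from $K^{\sharp}_{G}$ down to $K$. For every differential automorphism $\sigma$ of $K^{\sharp}_{G}/K$, we have $\sigma(a) = a$ (as $a \in LG(K)$) and $\sigma$ preserves the group $\partial\ell n_{G} G(K^{\sharp}_{G})$; thus $\sigma(H)$ is again an algebraic $\partial$-subgroup of $G$ over $K^{\sharp}_{G}$ satisfying the defining property of $H$, and by uniqueness $\sigma(H) = H$. Since $K^{\sharp}_{G}/K$ is a generalized strongly normal extension in the sense of \cite{DGTI, DGTII, Pillay-DGTIV}, with fixed field $K$, this invariance forces $H$ to be defined over $K$.

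Suppose, for contradiction, that $H \subsetneq G$. Let $\pi : G \to G' := G/H$, again an almost semi-abelian $\partial$-group over $K$, and set $\bar a := L\pi(a) \in LG'(K)$ and $\bar y := \pi(y) \in G'(K^{diff})$. Applying $L\pi$ to the decomposition of $a$ and clearing a rational denominator $N$, one gets $N\bar a \in \partial\ell n_{G'} G'(K^{\sharp}_{G})$. The crucial subclaim is that one can find a primitive for $N\bar a$ already in $G'(K)$: once this is achieved, $N\bar a \in \partial\ell n_{G'} G'(K)$, and dividing by $N$ — using divisibility of $G'(K)$ and the $\Q$-vector space structure on $LG'(K)$, both of which hold since $K$ is algebraically closed — yields $\bar a \in \partial\ell n_{G'} G'(K)$. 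Lifting the primitive to $G(K)$ via surjectivity of $\pi$ on $K$-rational points then gives $a \in LH + \partial\ell n_{G} G(K)$, contradicting the hypothesis. Hence $H = G$, and the conjecture follows.

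The hard part is precisely this last descent: showing that a logarithmic equation $\partial\ell n_{G'}(z) = \bar a$ over $K$ on an almost semi-abelian $\partial$-group $G'$ which acquires a $K^{\sharp}_{G}$-rational solution already has a $K$-rational one. This is a question about the differential Galois structure of the extension $K^{\sharp}_{G}/K$. When $G'$ is an abelian variety (or a torus), the relevant Galois group is reductive and the descent can be carried out by the semi-simplicity / constrained-cohomology arguments of \cite{Bertrand-Fourier} and \cite{Bertrand-descent}. In the mixed semi-abelian case, however, unipotent contributions arising from the toric part fibered over a non-constant abelian quotient destroy reductivity, and unconditional descent is not currently available; this is the reason the paper establishes the conjecture only when the toric part of $B$ is at most one-dimensional.
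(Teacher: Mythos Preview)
The statement you are addressing is Conjecture 2.4, which the paper does \emph{not} prove in general; it establishes only the special cases of Theorems 2.6 and 2.7 (toric part of dimension at most one). So there is no ``paper's own proof'' to compare against, and your final paragraph is exactly right: the conjecture is open, and you have correctly located the obstruction --- namely, the descent from a $K^{\sharp}_{G}$-rational solution of $\partial\ell n_{G'}(-)=\bar a$ on the quotient $G'=G/H$ to a $K$-rational one. That said, your write-up is a proof \emph{outline} with an acknowledged gap, not a proof; as such it should not be presented as settling the conjecture.

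Two remarks on the parts you do argue. First, your descent of $H$ to $K$ via $Aut(K^{\sharp}_{G}/K)$-invariance and ``fixed field $K$'' is morally correct but rests on an assertion (that $K^{\sharp}_{G}/K$ is a single generalized strongly normal extension with fixed field $K$) that is not established in the paper and is somewhat delicate in the general semiabelian situation. The paper's argument (the Claim in the proof of Proposition 2.5, repeated as Claim~1 in the proof of Theorem~2.6) is more robust: it works inside $K^{diff}$ and uses that $H^{\partial}(K^{diff})$ is $Aut(K^{diff}/K)$-invariant because both the equation $\partial\ell n_{G}(-)=a$ and the set $K^{\sharp}_{G}$ are, together with elimination of imaginaries in $DCF_{0}$. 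You would strengthen your write-up by adopting that argument.

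Second, your route through the single quotient $G'=G/H$ is more abstract than the paper's approach to the special cases. In the proof of Theorem~2.6 the paper does not attempt a general descent on $G/H$; instead it enlarges $H$ to a maximal proper $\partial$-subgroup $H_{1}$ so that $G/H_{1}$ is of a very specific shape (either $\Ga_{m}$, or $\overline{A_{2}}$ for a simple abelian variety $A_{2}$), and then exploits the concrete structure of $K^{\sharp}_{G}$ over $K$ (Remarks~3.2 and~3.3, Fact~3.1) to show that the image of $y$ in this small quotient is $K$-rational. Your framework would reproduce these cases if you specialized $G'$ accordingly; conversely, the paper's case analysis makes transparent why no general descent is currently available when the toric part is larger.
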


We will prove the following results in the direction of  Conjectures 2.3 and (the weaker) 2.4. 

\begin{Proposition} Conjecture 2.3  holds when $G$ is ``almost abelian". 
\end{Proposition}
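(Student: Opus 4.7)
The plan is to derive Proposition 2.5 from Fact 2.2(ii) via a two-stage Galois descent from $K^{\sharp} = K^{\sharp}_{G}$ down to $K$, extending the method of \cite{Bertrand-descent} from the universal vectorial extension $\tilde A$ itself to its $\partial$-quotients $G = \tilde A/U$ by unipotent $\partial$-subgroups. Since $G$ is commutative, Fact 2.2(ii) already supplies the smallest algebraic $\partial$-subgroup $H_{0}$ of $G$ defined over $K^{\sharp}$ with $a \in LH_{0}(K^{\sharp}) + \Q \cdot \partial \ell n_{G} G(K^{\sharp})$, a gauge transformation $g \in G(K^{\sharp})$, and the identification of $H_{0}^{\partial}(K^{\sharp})$ as the identity component of the Galois group; in particular $tr.deg(K^{\sharp}(y)/K^{\sharp}) = \dim H_{0}$. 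To match Conjecture 2.3 I must then show (a) that $H_{0}$ is $K$-rational, and (b) that $g$ may be chosen in $G(K)$. The tensor factor $\Q$ then disappears, because $G(K)$, and hence $\partial \ell n_{G} G(K)$, is divisible over the algebraically closed field $K$ (using that $G = \tilde A/U$ is an extension of an abelian variety by a vector group modulo a unipotent group, all of whose $K$-points form divisible groups).

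Part (a) is purely formal. Since $a$ and the unique $\partial$-structure $s$ on $G$ are $K$-rational, any $\sigma \in \mathrm{Aut}(K^{\sharp \; alg}/K)$ sends $H_{0}$ to a subgroup $\sigma(H_{0})$ satisfying the same defining condition, and the uniqueness of the smallest such subgroup forces $\sigma(H_{0}) = H_{0}$. Hence $H_{0}$ descends to $K$.

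The substantive step is (b). Writing $\bar G = G/H_{0}$ (again an almost abelian $\partial$-group over $K$) and $\pi: G \to \bar G$, one has $\partial \ell n_{\bar G}(\pi(g)) = -\pi(a) \in L\bar G(K)$, and what is needed is $\bar g' \in \bar G(K)$ satisfying the same equation; any lift $g' \in G(K)$ of such a $\bar g'$ then yields $a + \partial \ell n_{G}(g') \in LH_{0}(K)$, as required. Equivalently, the $1$-cocycle $\sigma \mapsto \sigma(\pi(g)) - \pi(g)$, taking values in $\bar G^{\partial}(K^{\sharp})$ along $\mathrm{Aut}(K^{\sharp \; alg}/K)$, must be a coboundary. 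This is the main obstacle. I plan to handle it by invoking the Galois descent of \cite{Bertrand-Fourier} and \cite{Bertrand-descent}, whose key inputs in the almost abelian case are Poincar\'e reducibility for the underlying abelian variety $A$, providing a near-semisimple decomposition of the $\partial$-group structure on $\tilde A$ that passes to the quotient $\tilde A/U = G$, together with Kolchin's constrained cohomology vanishing as already exploited in the proof of Fact 2.2(i). Both ingredients fail as soon as $B$ has a torus of dimension $\geq 2$, which is precisely why (as the introduction warns) the semiabelian analogue is established only when the toric part is at most one-dimensional.
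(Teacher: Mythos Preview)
Your overall framework matches the paper's: both identify the Galois group $H_{0}$ (the paper calls it $H_{1}$) over $K^{\sharp}$, establish its $K$-rationality, and then seek to descend the gauge transformation to $K$. Part (a) is fine; the paper argues via model-theoretic homogeneity of $K^{diff}$ and elimination of imaginaries in $DCF_{0}$, but your uniqueness argument works just as well.

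Part (b), however, is a genuine gap rather than a proof. You name Poincar\'e reducibility and Kolchin's constrained cohomology as the key inputs, but neither is the operative tool here. The decisive fact, which you do not invoke, is Remark~3.2 (resting on Fact~3.1): $K^{\sharp}_{G} = K^{\sharp}_{U_{A}}$ is a Picard--Vessiot extension of $K$ whose Galois group $J(\C)$ is a \emph{semisimple} linear algebraic group over $\C$. The paper then splits the descent according to whether $H_{0}$ and the minimal $K$-rational candidate $H$ have the same image in $A$. In Case~(I) the quotient $H/H_{0}$ is unipotent, the cocycle $\sigma \mapsto \sigma(\bar y) - \bar y$ lands in a rational representation of $J(\C)$, and semisimplicity of $J$ gives complete reducibility, hence $H^{1}(J(\C),\bar V^{\partial}) = 0$. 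In Case~(II) the quotient has a simple abelian-variety factor $\overline{B_{2}}$, and one uses instead that a connected semisimple linear group admits no nontrivial homomorphism to $B_{2}(\C)$ or to $\overline{B_{2}}^{\partial}(K^{diff})$, forcing the image of $y$ there to be $K$-rational. Your single cocycle argument in $\bar G^{\partial}$ does not separate these two mechanisms, and Kolchin's constrained cohomology (which produces $K$-points on $K$-defined torsors, as in the proof of Fact~2.1) does not address the vanishing of $H^{1}(J(\C), \bar G^{\partial})$ at all. Poincar\'e reducibility is relevant only indirectly, through the proof of Fact~3.1 and the structure of the $\partial$-subgroups of $\tilde A$; it does not by itself trivialize the cocycle.
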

\noindent 
The truth of the weaker Conjecture 2.4 in the almost abelian case   is already established in  \cite{Bertrand-Fourier}, Section 8.1(i). This reference does not address Conjecture 2.3 itself,  even if  in this case, the ingredients for its proof are there (see also \cite{Bertrand-descent}). So we take the liberty to give a reasonably self-contained  proof of Proposition 2.5 in Section 3.

\vspace{2mm}
 
As announced above, one of the  main points of the Galois-theoretic part of this paper is to try to extend Proposition 2.5 to the almost semiabelian case. 
Due to technical complications, which will be discussed later, we restrict our attention to the simplest possible extension of the almost abelian case, namely where the toric part of the semiabelian variety is $1$-dimensional, and also we sometimes just consider the  generic  case. So   the next proposition gives Conjecture 2.4 under the restriction on the toric part.
 For simplicity of notation we will work with an almost semiabelian $G$ of the form $\tilde B$ for $B$ semiabelian. 

\begin{Theorem}  Suppose that $B$ is a semiabelian variety over $K = \C(t)^{alg}$ with toric part of dimension $\leq 1$. Let $G = \tilde B$, $a\in LG(K)$ and $y\in G(K^{diff})$ a solution of $\partial\ell n_{G}(-) = a$.  Suppose that for no proper algebraic $\partial$-subgroup $H$ of $G$ defined over $K$  is 
$y\in H + G(K)$. Then $tr.deg(K^{\sharp}_{G}(y)/K^{\sharp}_{G}) = dim(G)$ and $G^{\partial}(K^{diff})$ is the differential Galois group.
\end{Theorem}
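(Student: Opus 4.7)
Since the case of trivial toric part is Proposition 2.5, I assume the toric part of $B$ equals $\Ga_m$, so that the exact sequence $0 \to \Ga_m \to B \to A \to 0$ pulls back to an exact sequence of algebraic $\partial$-groups $0 \to \Ga_m \to G \to \tilde A \to 0$ over $K$, with projection $\pi : G \to \tilde A$. Set $\bar y := \pi(y) \in \tilde A(K^{diff})$ and $\bar a := d\pi(a) \in L\tilde A(K)$, so $\partial \ell n_{\tilde A}(\bar y) = \bar a$.

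The first step is to transfer the non-degeneracy to $\bar y$. If $\bar y \in H_0 + \tilde A(K)$ for some proper $K$-rational $\partial$-subgroup $H_0 \subsetneq \tilde A$, then, since $H^1(K,\Ga_m) = 0$, the map $G(K) \twoheadrightarrow \tilde A(K)$ is surjective, so a lift $g_0 \in G(K)$ of a coset representative gives $y - g_0 \in \pi^{-1}(H_0)$, violating the hypothesis since $\pi^{-1}(H_0) \subsetneq G$ is a proper $K$-rational $\partial$-subgroup. Proposition 2.5 applied to $\bar y$ then yields that the Galois group of $K^\sharp_{\tilde A}(\bar y)/K^\sharp_{\tilde A}$ is $\tilde A^\partial$; one then checks, using that $K^\sharp_G$ is obtained from $K^\sharp_{\tilde A}$ by adjoining solutions of logarithmic equations on $\Ga_m$ (which are algebraically independent from $\bar y$) that the Galois group of $\bar y$ over the larger base $K^\sharp_G$ remains $\tilde A^\partial$. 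Consequently the Galois group $H$ of $K^\sharp_G(y)/K^\sharp_G$, a $\partial$-subgroup of $G$ by Fact 2.2(ii), projects surjectively onto $\tilde A$.

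It remains to exclude $H \subsetneq G$. In that case $H \cap \Ga_m$ is a proper, hence finite, $\partial$-subgroup of $\Ga_m$, and the identity component $H^0$ defines, up to isogeny, a $K^\sharp_G$-rational $\partial$-splitting of the extension $\pi$. The main obstacle is to rule such a splitting out. The plan is to run a Galois descent in the spirit of \cite{Bertrand-Fourier}, \cite{Bertrand-descent} (the technology that underlies Proposition 2.5): one analyzes the action of the differential Galois group of $K^\sharp_G/K$ on the affine space of $\partial$-splittings of $\pi$, and exploits the rigidity of $\partial$-subgroups of $\Ga_m$ together with the almost-abelian descent on the $\tilde A$-side to force any $K^\sharp_G$-rational splitting to already be defined over $K$. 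A $K$-rational $\partial$-splitting of $\pi$ would identify $G$ up to isogeny with $\tilde A \times \Ga_m$ as algebraic $\partial$-groups over $K$; decomposing $y = (y_A, y_m)$ accordingly, the fullness of the Galois group on the $\tilde A$-factor together with Kolchin's theory for $\Ga_m$ would then exhibit a proper $K$-rational $\partial$-subgroup $H' \subsetneq G$ with $y \in H' + G(K)$, contradicting the hypothesis. The restriction to a $1$-dimensional toric part enters essentially at the descent step, where it provides the rigidity of the $\Ga_m$-fiber that the argument requires; extending the method to higher-dimensional tori would need an analogous, and currently unavailable, control of the toric side.
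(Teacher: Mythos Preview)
Your two-case split (does the Galois group $H$ surject onto $\tilde A$ or not?) matches the paper's, but you miss a key simplification and overcomplicate the second case.

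The Galois group $H$ of $K^\sharp_G(y)/K^\sharp_G$ is \emph{automatically defined over $K$}, not merely over $K^\sharp_G$. The argument (Claim~1 in the paper, and already in the proof of Proposition~2.5) is model-theoretic: $H^\partial(K^{diff})$ is the set of $g \in G^\partial(K^{diff})$ such that $yg$ and $y$ have the same type over $K^\sharp_G$; since $a \in LG(K)$ and $K^\sharp_G$ are $Aut(K^{diff}/K)$-invariant, so is $H^\partial$, hence it is $K$-definable by elimination of imaginaries in $DCF_0$. Thus if $H$ surjects onto $\tilde A$ with $H \subsetneq G$, the resulting isogeny-splitting $G \sim \Ga_m \times \tilde A$ is already over $K$, and your proposed Galois descent of the splitting is simply not needed. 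The paper then finishes this case without any further obstacle: since $G$ splits, $G^\partial = \Ga_m(\C) \times \tilde A^\partial$, so $K^\sharp_G = K^\sharp_{\tilde A}$, which is Picard--Vessiot over $K$ with semisimple Galois group (Remark~3.2); the projection $d = y/H \in \Ga_m(K^\sharp_G)$ generates a PV extension of $K$ with Galois group in $\Ga_m(\C)$, and Galois-theoretic disjointness of a semisimple group from a torus forces $d \in \Ga_m(K)$, whence $y \in H + G(K)$. So what you call ``the main obstacle'' does not exist, and this is also not where the one-dimensionality of $T$ is really used---it is used to conclude that $H \subsetneq G$ surjecting onto $\tilde A$ forces a splitting at all.

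Your route through Proposition~2.5 in the first case also has a gap. Proposition~2.5 identifies the Galois group over $K^\sharp_{\tilde A}$ via the smallest $H_0$ with $\bar y \in H_0 + \tilde A(K) + \tilde A^\partial(K^{diff})$, but you only verify $\bar y \notin H_0 + \tilde A(K)$; these differ by $U_A^\partial(K^{diff})$, which is generally not in $\tilde A(K)$, and $H_0 + U_A$ could a priori be all of $\tilde A$. The paper avoids Proposition~2.5 here and works top-down: assuming $H$ maps to a proper subgroup of $\tilde A$, it enlarges $H$ to a maximal proper $\partial$-subgroup $H_1$ of $G$ with $G/H_1 \cong \overline{A_2}$ for some simple abelian subvariety $A_2$, uses $K$-largeness $K^\sharp_{\overline{A_2}} = K$, and then the structure of $K^\sharp_G$ over $K^\sharp_{U_A}$ (Remarks~3.2 and~3.3: a semisimple PV extension followed by a union of $\Ga_m$-PV extensions) to force $\eta = y/H_1$ into $(G/H_1)(K)$. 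This yields $y \in H_1 + G(K)$ directly, which is exactly the weaker hypothesis the theorem assumes; your bottom-up approach would at best give $y \in \pi^{-1}(H_0) + G(K) + G^\partial(K^{diff})$, which is not what is needed.
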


Note that the hypothesis  above `` $y\notin H + G(K)$ for any proper algebraic $\partial$-subgroup of $G$ over $K$ " is formally weaker than `` $y\notin H + G(K) + G^{\partial}(K^{diff})$ for any proper algebraic $\partial$-subgroup of $G$ over $K$ "  but nevertheless suffices, as shown by the proof of 2.6  in  Section 3.2. 
More specifically, assume that $G = \tilde A$ for a simple traceless abelian variety $A$,  that the maximal unipotent $\partial$-subgroup $U_A$ of $\tilde A$ vanishes, and that $a = 0 \in L\tilde A(K)$. The proposition then {\it implies} that any $y \in \tilde A^\partial(K^{diff})$ is actually defined over $K$, so $K^\sharp_{\tilde A} = K$. As in \cite{Bertrand-Fourier}, \cite{Bertrand-descent}, this property of  
$K$-largeness of $\tilde A$ (when $U_A = 0$) is in fact one of the main {\it ingredients} in the proof of 2.6.
As explained in \cite{DGTIII} it is based on the {\em strong minimality} of ${\tilde A}^{\partial}$  in this context, which was itself proved in an unpublished paper \cite{HS}, although there are other published accounts. Recently it has been noted in \cite{BBP-ML} that this $K$-largeness property can be seen rather more directly, using only {\em simplicity} of $A$, avoiding in particular ``deep" results from either model theory or differential algebraic geometry.

\vspace{2mm}

Our last Galois-theoretic result requires  the {\it semiconstant} notions introduced in \cite{LW}, although our notation will be a slight modification of that in \cite{LW}.  First a connected algebraic group $G$ over $K$ is said to be constant if $G$ is isomorphic (as an algebraic group) to an algebraic group defined over $\C$ (equivalent $G$ arises via base change from an algebraic group $G_{\C}$ over $\C$). For $G$ an algebraic group over $K$, $G_{0}$ will denote the largest (connected) constant algebraic subgroup of $G$. 
We will concentrate on the case $G = B$ a semiabelian variety over $K$, with $0\to T \to B \to A\to 0$ the canonical exact sequence.  So now $A_{0}$, $B_{0}$ denote the constant parts of $A,B$ respectively.  The inverse image of $A_{0}$ in $B$ will be called the semiconstant part of $B$ and will now be denoted by  $B_{sc}$. We will call $B$ semiconstant if $B = B_{sc}$ which is equivalent to requiring that $A = A_{0}$, and  moreover allows the possibility that $B = B_{0}$ is constant. 
(Of course, when $B$ is constant, $\tilde B$, which is also constant, obviously satisfies Conjecture 2.3, in view of Fact 2.1.)

\begin{Theorem} Suppose that $K = \C(t)^{alg}$ and that $B = B_{sc}$ is a semiconstant semiabelian variety over $K$ with toric part of dimension $\leq 1$.  Then Conjecture 2.3 holds for $G = \tilde B$. 
\end{Theorem}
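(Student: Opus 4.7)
The plan is to combine Proposition~2.5, applied to the abelian quotient, with a Galois descent argument for the residual $1$-dimensional toric piece. I use the exact sequence $0 \to T \to G \to \tilde A \to 0$, where $T$ is the toric part of $B$ and $\tilde A$ is the universal vectorial extension of $A = A_0$, which is constant. The case $T = 0$ is covered directly by Proposition~2.5, so I assume $T = \Ga_m$. Since $A$ is defined over $\C$, so is $\tilde A$, and the unique induced $\partial$-structure on $\tilde A$ must be the zero section $s_0$, giving $\tilde A^\partial(K^{diff}) = \tilde A(\C) \subseteq K$ and hence $K^\sharp_{\tilde A} = K$.

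First, I project the data: let $\pi : G \to \tilde A$, $y_A = \pi(y)$, $a_A = L\pi(a) \in L\tilde A(K)$. Proposition~2.5 applied to $\tilde A$ produces the smallest algebraic $\partial$-subgroup $H_A \leq \tilde A$ defined over $K$ (in fact over $\C$, since $s_0$-$\partial$-subvarieties of a constant group descend to the constants), together with $g_A \in \tilde A(K)$ such that $a_A - \partial\ell n_{\tilde A}(g_A) \in LH_A(K)$; moreover $H_A(\C)$ is the Galois group of $K(y_A)/K$. Because $K$ is algebraically closed, Hilbert~90 for $T = \Ga_m$ makes $G(K) \twoheadrightarrow \tilde A(K)$ surjective, so I lift $g_A$ to some $g \in G(K)$ and, after replacing $y$ by $y - g$ and $a$ by $a - \partial\ell n_G(g)$ (absorbing a constant correction in $\tilde A(\C) \subseteq K$), I may assume $y_A \in H_A(K^{diff})$, equivalently $y \in G_A(K^{diff})$ where $G_A := \pi^{-1}(H_A)$ is an algebraic $\partial$-subgroup of $G$ over $K$ fitting in $0 \to T \to G_A \to H_A \to 0$. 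The minimality of $H_A$ now forces any candidate smallest $\partial$-subgroup $H \leq G$ over $K$ with $a \in LH + \partial\ell n_G G(K)$ to satisfy $\pi(H) = H_A$; together with $y \in G_A$, this gives $H \subseteq G_A$. The smallest such $H$ is then determined by whether the $\partial$-extension $G_A \to H_A$ admits a $K$-rational splitting and whether the residual torus component $a_T$ of $a$ is a $K$-rational logarithmic derivative on $T$: either $H = G_A$ (dimension $\dim H_A + 1$), or $H$ is the image of a $K$-section of $\pi|_{G_A}$ (dimension $\dim H_A$).

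The main obstacle is to verify that this explicit $K$-minimal $H$ coincides with the differential Galois group $H^\sharp$ from Fact~2.2(ii) --- the smallest $\partial$-subgroup over $K^\sharp_G$ with $a \in LH^\sharp + \partial\ell n_G G(K^\sharp_G)$. The inclusion $H^\sharp \subseteq H$ is automatic. For the reverse inclusion, I first observe that $H^\sharp$ is $Aut(K^\sharp_G/K)$-invariant, since $G$ is commutative and so $H^\sharp$ depends only on $a \in LG(K)$ and not on the choice of $y$; hence $H^\sharp$ is defined over $K$ as a subvariety. It then remains to descend a $K^\sharp_G$-rational gauge transformation to $K$. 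The key structural observation is that the constancy of $\tilde A$ forces $K^\sharp_G/K$ to be of purely multiplicative type: the exact sequence $0 \to T^\partial \to G^\partial \to \tilde A^\partial = \tilde A(\C) \subseteq K$ shows that $K^\sharp_G$ is generated over $K$ by $T^\partial(K^{diff})$ (with $T = \Ga_m$ carrying the $\partial$-structure determined by the extension class $[B] \in A^\vee(K)$) together with finitely many $K^{diff}$-lifts of points of $\tilde A(\C)$. The obstruction to descending a $K^\sharp_G$-splitting of $G_A \to H_A$ (respectively a $K^\sharp_G$-solution of the torus logarithmic equation $\partial\ell n_T(-) = a_T$) to $K$ then lies in a torsor under $\mathrm{Hom}^\partial_K(H_A, T)$ (respectively under $T^\partial$), and vanishes by a commutativity/Kolchin argument exploiting the explicit multiplicative nature of $K^\sharp_G/K$, in the spirit of the Galois descent techniques of \cite{Bertrand-Fourier, Bertrand-descent} and relying on the logarithmic case of nonconstant Ax-Schanuel invoked elsewhere in the paper. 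This identifies $H^\partial(K^{diff})$ with the Galois group and yields Conjecture~2.3 in the semiconstant case.
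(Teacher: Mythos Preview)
Your overall architecture---project to $\tilde A$, solve there via Proposition~2.5 (or really Fact~2.1, since $\tilde A$ is constant), then lift and descend---is reasonable and shares some setup with the paper, but the part you flag as the ``main obstacle'' is not actually proved. The paper's argument for Theorem~2.7 does not proceed by a single abstract descent of a $K^\sharp_G$-gauge transformation; instead it takes the minimal $K$-defined $\partial$-subgroup $H$ with $y\in H+G(K)+G^\partial(K^{diff})$, uses Theorem~2.6 to dispose of the case $H=G$, translates so that $y\in H$, and then does a concrete case analysis on the shape of $H$ inside $G$ (the cases $H=\Ga_m$; $H$ surjecting onto $\tilde A$; $H$ disjoint from $T$; $T\le H$ with the Galois group $H_1$ either complementing $T$ or not). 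The key structural input, corresponding to your remark that $K^\sharp_G/K$ is ``of multiplicative type'', is made precise as the paper's Fact~3.4: since $\tilde A^\partial=\tilde A(\C)\subset K$, the group $G^\partial(K^{diff})$ is a union of $\Ga_m(\C)$-cosets each defined over $K$, so $K^\sharp_G$ is a directed union of Picard--Vessiot extensions of $K$ with Galois groups inside $\Ga_m^n(\C)$. Each case is then settled by an explicit Galois-theoretic independence argument (no common quotient between $\Ga_m^n(\C)$ and the relevant constant group), or, in the delicate case $H=\Ga_m$, by showing that dependence of $y$ on generators of $K^\sharp_G$ forces an integral relation $ky=z+g$ with $z\in G^\partial$, $g\in G(K)$, contradicting the minimality of $H$.

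Your write-up replaces exactly this content with ``vanishes by a commutativity/Kolchin argument \dots\ relying on the logarithmic case of nonconstant Ax--Schanuel''. That is where the gap lies. No form of Ax--Schanuel is used (or available) in the paper's proof of 2.7; Conjecture~4.1 appears later as an open problem, and the logarithmic Ax--Schanuel ingredient (Fact~5.1) is used only in Section~5 for the Manin map results. Also, your parenthetical that $T=\Ga_m$ carries ``the $\partial$-structure determined by the extension class $[B]$'' is off: $T$ is a $\partial$-subgroup of $G$ and simply carries the standard structure, with $T^\partial=\Ga_m(\C)$; the extension class governs how the cosets of $T^\partial$ in $G^\partial$ sit over $\tilde A(\C)$, which is precisely what Fact~3.4 records. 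To make your outline into a proof you would need to supply, in place of the descent hand-wave, the concrete independence arguments the paper gives---in particular the $H=\Ga_m$ case, which does not reduce to a torsor-vanishing statement but to an explicit analysis of algebraic subgroups of $\Ga_m^{n+1}(\C)$.
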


\subsection{Lindemann-Weierstrass via Galois theory}

We are now ready to describe the impact of the previous Galois theoretic results on Ax-Lindemann problems, where $a = \partial_{LG} (\tilde x) \in \partial_{LG}(LG(K))$.  

\vspace{2mm}

Firstly, from Theorem 2.6  we will deduce directly the main result of \cite{LW} (Theorem 1.4), when $B$ is semiabelian with toric part at most  $\Ga_{m}$,  but now with transcendence degree computed over $K^{\sharp}_{\tilde B}$:

\begin{Corollary} Let $B$ be a semiabelian variety over  $K 
= \C(t)^{alg}$ 
such that the toric part of $B$ is of dimension $\leq 1$  and $B_{sc} = B_{0}$ (i.e. the semiconstant part $B_{sc}$ of $B$ is constant). Let $x\in LB(K)$, and lift $x$ to $\tilde x\in L\tilde B (K)$.  Assume that 
\newline
$(^*)$ for no proper algebraic subgroup $H$ of $\tilde B$ defined over $K$ is $\tilde x\in LH(K) +  (L{\tilde B})^{\partial}(K)$, 
\newline
which under the current assumptions is equivalent to demanding that for no proper semiabelian subvariety $H$ of $B$, is $x\in LH(K) + LB_{0}(\C)$. Then,
 
 \smallskip
 
(i) any solution $\tilde y\in B({\cal U})$  of $\partial \ell n_{\tilde B}(-) = \partial_{L\tilde B}(\tilde x)$ satisfies

\smallskip

 \centerline{$tr.deg(K^{\sharp}_{\tilde B})(\tilde y)/K^{\sharp}_{\tilde B}) = dim(\tilde B)$ \;;}
 
\smallskip

(ii) in particular,   $y := exp_{B}(x)$ satisfies $tr.deg(K^{\sharp}_{\tilde B}(y)/K^{\sharp}_{\tilde B}) = dim(B)$, i.e.  is a generic point over $K^{\sharp}_{\tilde B}$ of $B$.  
\end{Corollary}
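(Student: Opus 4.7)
The plan is to apply Theorem~2.6 to $G = \tilde B$ with $a = \partial_{L\tilde B}(\tilde x) \in LG(K)$: part~(i) then reduces to verifying that no proper algebraic $\partial$-subgroup $H$ of $\tilde B$ over $K$ satisfies $\tilde y \in H + \tilde B(K)$, whereas part~(ii) will follow from~(i) by projecting along $\pi : \tilde B \twoheadrightarrow B$. Indeed, choosing $\tilde y$ so that $\pi(\tilde y) = y$ (possible up to an element of $\tilde B^\partial \subseteq K^\sharp_{\tilde B}$), the fact that $\tilde y$ is generic in $\tilde B$ over $K^\sharp_{\tilde B}$ forces $y = \pi(\tilde y)$ to be generic in $B$ over $K^\sharp_{\tilde B}$, since any proper $K^\sharp_{\tilde B}$-subvariety of $B$ would pull back to a proper $K^\sharp_{\tilde B}$-subvariety of $\tilde B$ containing $\tilde y$.

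For the hypothesis of Theorem~2.6, I would argue contrapositively. Suppose $\tilde y = h + g$ with $h \in H$, $g \in \tilde B(K)$, and $H$ a proper algebraic $\partial$-subgroup of $\tilde B$ over $K$. Applying the homomorphism $\partial\ell n_{\tilde B}$ gives
\[
\partial_{L\tilde B}(\tilde x) \;=\; \partial\ell n_{\tilde B}(h) + \partial\ell n_{\tilde B}(g),
\]
with $\partial\ell n_{\tilde B}(h) \in LH$ (as $H$ is a $\partial$-subgroup) and $\partial\ell n_{\tilde B}(g) \in L\tilde B(K)$. Picking a solution $\tilde g \in L\tilde B(\mathcal{U})$ of $\partial_{L\tilde B}(\tilde g) = \partial\ell n_{\tilde B}(g)$ (a formal logarithm of $g$), one obtains $\partial_{L\tilde B}(\tilde x - \tilde g) \in LH$, so the image of $\tilde x - \tilde g$ in the quotient $\partial$-module $L\tilde B/LH$ is horizontal, i.e.\ lies in $(L\tilde B/LH)^\partial$. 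Lifting this horizontal class back to $L\tilde B$ and descending all the data to $K$ would give $\tilde x \in LH(K) + (L\tilde B)^\partial(K)$, contradicting $(\ast)$. The parenthetical equivalence of $(\ast)$ with its formulation in terms of semiabelian subvarieties of $B$ is checked separately: a proper algebraic subgroup $H \subset \tilde B$ witnessing failure of $(\ast)$ may be replaced by the preimage under $\pi$ of a proper semiabelian subvariety of $B$, since under $B_{sc}=B_0$ the unipotent summands of $L\tilde B$ are absorbed into $(L\tilde B)^\partial(K)$.

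The main obstacle is the $K$-rational descent step: both $\tilde g$ and the lift of the horizontal class of $\tilde x - \tilde g$ a priori live in $K^{diff}$ or $\mathcal{U}$, not in $K$. The hypotheses that the toric part of $B$ has dimension at most one and $B_{sc} = B_0$ are precisely what should permit this descent, by ensuring that $(L\tilde B)^\partial(K)$ already captures all the relevant horizontal sections of $L\tilde B$ modulo $K$-rational data. This is in the spirit of the Galois descent arguments of~\cite{Bertrand-Fourier} and~\cite{Bertrand-descent}, on which the proof will rely at this step.
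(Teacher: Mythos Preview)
Your starting point agrees with the paper's: reduce (i) to Theorem~2.6 by assuming $\tilde y \in H + G(K)$ for some proper $\partial$-subgroup $H$ of $G=\tilde B$, and deduce (ii) from (i) by projection. But the contrapositive you outline for (i) does not close: the ``descent'' you flag as the main obstacle is in fact a genuine gap, and the paper's argument does \emph{not} proceed by descending $\tilde g$ or horizontal lifts back to $K$.

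Concretely, from $\partial_{LG}(\tilde x)=\partial\ell n_G(h)+\partial\ell n_G(g)$ you only obtain, in $L(G/H)$, that the image $x'$ of $\tilde x$ satisfies $\partial_{L(G/H)}(x')=\partial\ell n_{G/H}(g')$ with $g'\in (G/H)(K)$. This does \emph{not} yield $x'\in (L(G/H))^\partial(K)$: your decomposition writes $x'$ as a sum of two typically transcendental pieces (the image of your $\tilde g$ and the horizontal class), and there is no mechanism to lift horizontal classes of $L(G/H)$ back into $(LG)^\partial$, nor to make them $K$-rational. The hypotheses $B_{sc}=B_0$ and toric rank $\le 1$ do not by themselves force such a descent; the references \cite{Bertrand-Fourier}, \cite{Bertrand-descent} concern descent of Galois groups, not of individual logarithms.

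The paper instead enlarges $H$ to a \emph{maximal} proper connected $\partial$-subgroup $H_1$, so that $G/H_1$ is one of: $\Ga_m$; a simple constant abelian variety; or $\tilde A_1/U_{A_1}$ for a simple traceless $A_1$. Both the images $x'\in L(G/H_1)(K)$ and $y'\in (G/H_1)(K)$ are then automatically $K$-rational, and the hypothesis $(\ast)$ is preserved in the quotient (Lemma~4.2(ii) of \cite{LW}). One now has a $K$-rational solution $y'$ of $\partial\ell n_{G/H_1}(-)=\partial_{L(G/H_1)}(x')$ with $x'$ nondegenerate, and this is ruled out case by case using Ax--Lindemann in the constant case and Manin--Chai (Proposition~4.4 of \cite{LW}) in the traceless case. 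So the contradiction comes from the nonexistence of a $K$-rational ``exponential'' in a simple quotient, not from descending your transcendental logarithm $\tilde g$ to $K$. You should replace your descent paragraph by this passage to a simple quotient and the appeal to those two results.
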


See \cite{LW} for the analytic description of $exp_{B}(x)$ in (ii) above.  
In particular $exp_{B}(x)$ can be viewed as a a point of $B({\cal U})$.  We recall briefly the argument.  Consider $B$ as the generic fibre  of a family ${\bf B}\to S$
 of complex semiabelian varieties over a complex curve $S$, and $x$ as a rational section $x: S \to L{\bf B}$ of the corresponding family of Lie algebras.  Fix a small disc $U$ in $S$, such that $x:U\to L{\bf B}$ is holomorphic, and let $exp(x) = y:U\to {\bf B}$ be the holomorphic section obtained by composing with the exponential map in the fibres.  So $y$ lives in the differential field of meromorphic functions on $U$, which contains $K$, and can thus be embedded over $K$  in the  universal differentially closed field ${\cal U}$.   So talking about  $tr.deg(K_{\tilde B}^{\sharp}(y)/K_{\tilde B}^{\sharp})$ makes sense.

Let us comment on the methods.  In \cite{LW} an essential use was made of the so-called ``socle theorem" (Theorem 4.1 of \cite{LW}) in order to prove Theorem 1.4 there.  As recalled in the introduction, 
a differential Galois theoretic approach was also mentioned (\cite{LW}, \S 6), but could be worked out only when $\tilde B$ is $K$-large. 
In the current paper, we dispose of this hypothesis, and obtain  a stronger result, namely over $K^{\sharp}_{\tilde B}$\;, but for the time being at the expense of restricting the toric part of $B$. But even using the socle theorem does not seem to allow us to drop this restriction or even simplify the proof.

 When $B=A$ is an abelian variety one obtains a stronger statement than Corollary 2.8. This is Theorem 4.4 of \cite{Bertrand-descent}, which for the sake of completeness we restate, and  will deduce  from Proposition 2.5 in  Section 4.1.

\begin{Corollary} Let $A$ be an abelian variety over $K = \C(t)^{alg}$. Let $x\in LA(K)$, and let $B$ be the smallest abelian subvariety of $A$ such that $x\in LB(K) + LA_{0}(\C)$. Let $\tilde x\in L\tilde A (K)$ be a lift of $x$ and let $\tilde y\in \tilde A({\cal U})$ be such that $\partial\ell n_{\tilde A}(\tilde y) = \partial_{L\tilde A}(\tilde x)$. Then, 
$\tilde B^\partial$ is the Galois group of  $K^{\sharp}_{\tilde A}(\tilde y)$ over $K^{\sharp}_{\tilde A}$, so

\smallskip

 (i) $tr.deg(K^{\sharp}_{\tilde A}(\tilde y)/K^{\sharp}_{\tilde A}) =  dim(\tilde B) = 2 dim(B)$, and in particular:

\smallskip

(ii) $y := exp_{A}(x)$ satisfies $tr.deg(K^{\sharp}_{\tilde A}(y)/K^{\sharp}_{\tilde A}) = dim(B)$. 
\end{Corollary}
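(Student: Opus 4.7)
The plan is to apply Proposition 2.5 to $G = \tilde A$ with datum $a := \partial_{L\tilde A}(\tilde x)\in L\tilde A(K)$, and then to identify the resulting minimal algebraic $\partial$-subgroup $H$ of $\tilde A$ defined over $K$ with $\tilde B$. Once that identification is made, Proposition 2.5 immediately yields that $\tilde B^{\partial}$ is the Galois group of $K^{\sharp}_{\tilde A}(\tilde y)/K^{\sharp}_{\tilde A}$, establishing (i). Part (ii) then follows by pushing down along $\pi : \tilde A \to A$: the fibre of $\pi$ restricted to the $\tilde B^{\partial}$-orbit of $\tilde y$ is a torsor under $U_{B}^{\partial}$, of dimension $\dim U_{B} = \dim B$, so that $tr.deg(K^{\sharp}_{\tilde A}(y)/K^{\sharp}_{\tilde A}) = 2\dim B - \dim B = \dim B$.

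For the containment $H\subseteq \tilde B$, I would check that $\tilde B$ already satisfies the defining condition $\partial_{L\tilde A}(\tilde x)\in L\tilde B(K)+\partial\ell n_{\tilde A}(\tilde A(K))$. Decompose $x = x_{B}+x_{0}$ with $x_{B}\in LB(K)$ and $x_{0}\in LA_{0}(\C)$, and lift this decomposition to $L\tilde A$: pick $\tilde x_{B}\in L\tilde B(K)$ over $x_{B}$ and $\tilde x_{0}\in L\tilde A_{0}(\C)$ over $x_{0}$ (using that $\tilde A_{0}\subseteq \tilde A$ is a constant $\partial$-subgroup surjecting onto $A_{0}$ over $\C$), and set $\tilde u:=\tilde x - \tilde x_{B}-\tilde x_{0}\in LU_{A}(K)$. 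Apply $\partial_{L\tilde A}$ term by term: the first summand lies in $L\tilde B(K)$ because $\tilde B$ is a $\partial$-subgroup; the second vanishes because $\tilde x_{0}\in L\tilde A_{0}(\C)\subseteq (L\tilde A)^{\partial}(K)$; and the third equals $\partial\ell n_{\tilde A}(\tilde u)\in\partial\ell n_{\tilde A}(\tilde A(K))$ because on the vector $\partial$-subgroup $U_{A}$ the logarithmic derivative and the $\partial$-module action coincide. Summing gives the required containment.

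For the reverse containment $\tilde B\subseteq H$, take any algebraic $\partial$-subgroup $H$ of $\tilde A$ defined over $K$ with $\partial_{L\tilde A}(\tilde x) = h + \partial\ell n_{\tilde A}(g)$ for some $h\in LH(K)$ and $g\in\tilde A(K)$, and project via $\pi$ to $\bar H:=\pi(H)\subseteq A$. I would push the relation down modulo $LU_{A}$ and reinterpret it via the differential-algebraic Manin map $\mu_{A}$: up to contributions from the constant part, the image of $\partial\ell n_{\tilde A}(\tilde A(K))$ in $LA$ is captured by $LA_{0}(\C)$. The resulting statement is $x\in L\bar H(K) + LA_{0}(\C)$. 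Minimality of $B$ then forces $B\subseteq\bar H$, and since $\tilde A$ carries a unique $\partial$-structure in which $\tilde B$ is the canonical $\partial$-lift of $B$, this forces $\tilde B\subseteq H$.

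The main obstacle is this minimality direction: converting $\partial_{L\tilde A}(\tilde x)\in LH(K)+\partial\ell n_{\tilde A}(\tilde A(K))$ into a clean statement on $x$ requires precise control of the image of $\partial\ell n_{\tilde A}(\tilde A(K))$ in $LA$ modulo $LU_{A}$. This is essentially the content of the theorem of the kernel for $A$, in a form that allows nontrivial $\C$-trace and ensures that the only obstruction to membership in $L\bar H(K)$ comes from $LA_{0}(\C)$. I expect this to be where one must appeal to the socle/Manin-kernel machinery for $A$ over $K = \C(t)^{alg}$, rather than to the more elementary ingredients used in the sufficiency direction.
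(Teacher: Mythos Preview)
Your overall plan---apply Proposition 2.5 to $G=\tilde A$ and identify the minimal $H$ with $\tilde B$---is sound, and your first containment $H\subseteq\tilde B$ is essentially correct (though the difference $\tilde u=\tilde x-\tilde x_B-\tilde x_0$ lies in $LW_A(K)$, not in $LU_A(K)$; the argument still goes through because $\partial_{L\tilde A}$ and $\partial\ell n_{\tilde A}$ agree on all of $W_A=LW_A$, not just on $U_A$). Similarly, in your sketch for (ii) the relevant unipotent fibre has dimension $\dim W_B=\dim B$, not $\dim U_B$.

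The genuine gap is in your reverse containment. You propose to ``push the relation $\partial_{L\tilde A}(\tilde x)=h+\partial\ell n_{\tilde A}(g)$ down modulo $LU_A$'' and read off $x\in L\bar H(K)+LA_0(\C)$ via the Manin map. But there is no natural projection of this relation to $LA$: neither $\partial_{L\tilde A}$ nor $\partial\ell n_{\tilde A}$ factors through $L\tilde A\to LA$, and the target of $\mu_A$ is $L\tilde A/\partial\ell n_{\tilde A}(W_A)$, not $LA$. More concretely, controlling the term $\partial\ell n_{\tilde A}(g)$ with $g\in\tilde A(K)$ arbitrary amounts to knowing when $\partial\ell n_{\tilde A}(g)\in\partial_{L\tilde A}(L\tilde A(K))+LH$, which is already a Manin--Chai type statement that you have not reduced to anything known. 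Your final step ``$B\subseteq\bar H$ forces $\tilde B\subseteq H$'' is in fact correct (it follows from the essentialness of $\tilde A\to A$), but you do not justify it.

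The paper proceeds differently and avoids this obstacle. It first performs your easy direction as a \emph{reduction}: after translating by a $K$-point and a $\partial$-point one may assume $x\in LB(K)$, $\tilde x\in L\tilde B(K)$, $\tilde y\in\tilde B(K^{diff})$. It then invokes the equivalence (Corollary H.5 of \cite{LW}) between the condition on $x$ and the condition $\tilde x\notin LH(K)+(L\tilde A)^\partial(K)$ for all proper $H\subset\tilde B$---note this involves $(L\tilde A)^\partial$, not $\partial\ell n_{\tilde A}(\tilde A(K))$. If the Galois group were a proper $H\subset\tilde B$, one enlarges $H$ to a maximal proper $\partial$-subgroup $H_1$, passes to the simple quotient $\tilde A/H_1$, and obtains a contradiction from Ax--Lindemann in the constant case or Manin--Chai (Proposition 4.4 of \cite{LW}) in the traceless case, exactly as in the proof of Corollary 2.8. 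So the hard input is packaged in the cited equivalence and the simple-quotient contradiction, rather than in a direct projection argument.
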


\vspace{2mm}
 
We now return to the semiabelian context. Corollary 2.8  is not true without the assumption that the semiconstant part of $B$  is constant. The simplest possible counterexample is given in section 5.3 of \cite{LW}: $B$ is a nonconstant extension of a constant elliptic curve $E_0$ by $\Ga_{m}$, with  judicious choices of $x$ and $\tilde x$.  
Moreover $\tilde x$ will satisfy assumption (*) in Corollary 2.8, but $tr.deg(K(\tilde y)/K) \leq 1$, which is strictly smaller than $dim(\tilde B) = 3$.  We will use 2.6 and 2.7 as well as material from \cite{BMPZ} to give a full account of this situation (now over $K^{\sharp}_{\tilde B}$, of course), and more generally, for all semiabelian surfaces $B/K$, as follows:

\begin{Corollary} Let  $B$ be an extension over $K = \C(t)^{alg}$ of an elliptic curve $E/K$ by $\Ga_{m}$. Let $x\in LB(K)$ satisfy
\newline
 $(^*)$  for any proper algebraic subgroup $H$ of $B$, $x\notin LH + LB_{0}(\C)$.
\newline
 Let $\tilde x \in L\tilde B(K)$ be a lift of $x$, let $\overline x$ be its projection to $LE(K)$, and let $\tilde y \in \tilde B(\cal U)$ be such that
$\partial\ell n_{\tilde B}(\tilde y) = \tilde x$. Then,
$tr.deg(K^{\sharp}_{\tilde B}(\tilde y)/K^{\sharp}_{\tilde B}) = 3$, unless $\bar x\in LE_{0}(\C)$ in which case $tr.deg(K^{\sharp}_{\tilde B}(\tilde y)/K^{\sharp}_{\tilde B})$ is precisely $1$. 
\end{Corollary}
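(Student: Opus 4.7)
The plan is to split according to whether the elliptic projection $\bar x$ of $\tilde x$ lies in $LE_0(\C)$, and in each branch to reduce to one of Corollary~2.8, Theorem~2.7, and the logarithmic machinery of \cite{BMPZ}. The key preliminary observation is that the condition $(^*)$, tested on $H = T$, forces $\bar x \neq 0$ (so automatically $\bar x \notin LE_0(\C) = 0$ when $E$ is non-constant) and also forces $B$ to be non-constant whenever $\bar x$ happens to lie in $LE_0(\C)$: indeed, if $B = B_0$ and $\bar x \in LE_0(\C) \subseteq LB_0(\C)$, then $x \in LT(K) + LB_0(\C)$, violating $(^*)$ at $H = T$. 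Thus the subcase $\bar x \in LE_0(\C)$ only arises when $E = E_0$ is constant and $B$ is a non-constant (semiconstant) extension of $E_0$ by $\Ga_m$ with $B_0 = T = \Ga_m$, which is the precise setting of the counterexample from \cite{LW}, \S 5.3.

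\textbf{Case 1: $\bar x \notin LE_0(\C)$.} The goal is $tr.deg(K^{\sharp}_{\tilde B}(\tilde y)/K^{\sharp}_{\tilde B}) = 3$. If $E$ is non-constant, or if $B$ is itself constant, then $B_{sc} = B_0$, and the hypothesis $(^*)$ of the present corollary coincides with the hypothesis $(^*)$ of Corollary~2.8 (via the identification $(L\tilde B)^\partial(K) = L\tilde B_0(\C)$ and its image $LB_0(\C)$ in $LB$), so Corollary~2.8 closes this case. Otherwise $E = E_0$ is constant while $B$ is non-constant, making $B$ semiconstant non-constant with toric part $\Ga_m$, so Theorem~2.7 establishes Conjecture~2.3 for $\tilde B$; the transcendence degree then equals $\dim H$ for $H$ the smallest $\partial$-subgroup of $\tilde B$ over $K$ with $\tilde x \in LH + \partial\ell n_{\tilde B}(\tilde B(K))$. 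Assuming for contradiction $H \subsetneq \tilde B$, one analyses the cases $\pi(H) \subsetneq B$ and $\pi(H) = B$ separately (where $\pi : \tilde B \to B$): in the first, projecting the equation and extracting the constant part yields $x \in L\pi(H) + LB_0(\C)$, contradicting $(^*)$; the second, in which $H$ would be a proper $\partial$-subgroup of $\tilde B$ surjecting onto $B$, is ruled out by projecting to the vector part $W_B$ and using $\bar x \notin LE_0(\C)$.

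\textbf{Case 2: $\bar x \in LE_0(\C)$.} The goal is $tr.deg = 1$. By the preliminary observation, $B$ is semiconstant non-constant with $B_0 = T = \Ga_m$, so Theorem~2.7 applies. Setting $\bar y := \exp_E(\bar x) \in E_0(\C) \subset E(K)$, the triviality over the algebraically closed field $K$ of $\Ga_m$- and $W_B$-torsors gives a $K$-rational lift $b \in \tilde B(K)$ of $\bar y$; replacing $\tilde y$ by $\tilde y - b$ reduces us to a logarithmic equation on $\ker(\tilde B \to E) = W_B \oplus T \cong \Ga_a \oplus \Ga_m$ over $K$. Because $E = E_0$ is constant, the canonical splitting $\tilde E = E_0 \oplus W_E$ and the identification $W_B = W_E$ embed $W_E(\C) \subset \tilde E^\partial$ into $\tilde B^\partial(K^{diff})$, absorbing the additive $W_B$-component of $\tilde y$ into $K^{\sharp}_{\tilde B}$. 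The remaining multiplicative $\Ga_m$-component satisfies a scalar logarithmic equation whose coefficient is controlled by the non-constant extension class of $B$; invoking the logarithmic Ax-Schanuel of \cite{BMPZ} together with $(^*)$, this solution is transcendent over $K^{\sharp}_{\tilde B}$, yielding exactly $tr.deg = 1$. The main obstacle is this second case: identifying precisely how $K^{\sharp}_{\tilde B}$ for a non-constant semiconstant $\tilde B$ absorbs the additive component but not the multiplicative one, and matching this with the transcendence verdict of \cite{BMPZ}, requires a careful explicit description of $\tilde B^\partial$ in terms of the non-constant extension class of $B$ in $E_0^\vee(K) \setminus E_0^\vee(\C)$.
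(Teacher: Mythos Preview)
Your overall case split and the identification of the key tools (Corollary~2.8 for $B_{sc}=B_0$, Theorem~2.7 for the genuinely semiconstant case, and \cite{BMPZ} for the hard logarithmic input) are correct, and your preliminary observation on when $\bar x\in LE_0(\C)$ can occur matches the paper. The execution of the two nontrivial branches, however, diverges from the paper and contains genuine gaps.

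\textbf{Case 1, semiconstant non-constant, $\bar x\notin LE(\C)$.} The paper does \emph{not} argue via the minimal $H$ from Theorem~2.7. Instead it projects $\tilde y$ to $\check y\in\tilde E$, applies Corollary~2.9 (constant Ax--Lindemann for $E=E_0$) to get $tr.deg(K(\check y)/K)=2$ with Galois group $\tilde E(\C)$, and then uses Fact~3.4 (that $K^\sharp_G$ is a union of $\Ga_m^n$-PV extensions of $K$) plus the absence of proper subgroups in $\tilde E(\C)\times\Ga_m^n(\C)$ projecting onto both factors to conclude $\check y$ is independent from $K^\sharp_G$ over $K$. The Galois group of $K^\sharp_G(\tilde y)/K^\sharp_G$ then surjects onto $\tilde E^\partial$, and since $B$ does not split, it must be all of $G^\partial$. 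Your alternative route through Theorem~2.7 would in principle work, but your case analysis of $H$ is not carried out: the assertion that ``projecting and extracting the constant part yields $x\in L\pi(H)+LB_0(\C)$'' does not follow directly from $a\in LH+\partial\ell n_G(G(K))$ (note $a=\partial_{LG}(\tilde x)$, not $\tilde x$), and the sub-case $\pi(H)=B$ with $H\subsetneq\tilde B$ needs an actual argument about which $\partial$-subgroups of $\tilde B$ surject onto $B$.

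\textbf{Case 2, $\bar x\in LE_0(\C)$.} This is where the real content lies, and here your proposal has a substantive gap. First, some technical slips: you should not set $\bar y=\exp_E(\bar x)$ but rather take $\check y$ as the projection of $\tilde y$ to $\tilde E$ (which lies in $\tilde E(\C)$ since $\check a=0$); and $\tilde E^\partial$ is a \emph{quotient} of $\tilde B^\partial$, not embedded in it, so the absorption of the additive component into $K^\sharp_{\tilde B}$ does not go the way you suggest. More seriously, your final sentence --- that the $\Ga_m$-component is ``transcendent over $K^\sharp_{\tilde B}$'' by BMPZ and $(^*)$ --- is precisely the statement to be proved, and you give no argument. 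The paper proceeds quite differently: after the reduction (which you do correctly identify) it observes that by Theorem~2.7 it suffices to rule out $a\in\partial\ell n_G(G(K))$, i.e.\ the existence of $\tilde s\in G(K)$ with $\partial_{LG}(\tilde x)=\partial\ell n_G(\tilde s)$. Assuming such $\tilde s$, one passes to the analytic logarithm $\tilde x_1=\log_G(\tilde s)$, shows $\tilde x_1\in LG(K^\sharp_{LG})$ and identifies $K^\sharp_{LG}$ with the period field $F_q$ of \cite{BMPZ}; then $F_{pq}(\log_B(s))=F_q$, and since the projection $p\in E(\C)$ is constant while $q$ is not, Case~(SC2) of the Main Lemma of \cite{BMPZ} forces $p$ to be torsion. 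A further step using \cite{BMPZ} (the isotrivial case of the Main Lemma) then forces a $\Ga_m$-component to be constant, finally contradicting $(^*)$. None of this chain is visible in your sketch; without it, Case~2 is not proved.
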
 

\noindent
Here, $E_0$ is the constant part of $E$. Notice that in view of Hypothesis (*), $E$ must descend to $\C$ and $B$ must be non-constant (hence not isotrivial) if $x$ projects to $LE_0(\C)$.

\subsection{Manin maps}

We finally discuss the results on the Manin maps attached to abelian varieties.  The expression ``Manin map" covers at least two maps. We are here mainly concerned with the model-theoretic or {\it differential algebraic Manin map}.   
  We identify our algebraic, differential algebraic, groups with their sets of points in a universal differential field $\cal U$ (or alternatively, points in a differential closure of whatever differential field of definition we work over). 
 So for now let $K$ be a differential field, and $A$ an abelian variety over $K$.  $A$ has a smallest Zariski-dense differential algebraic (definable in ${\cal U}$) subgroup $A^{\sharp}$, which can also be described as the smallest definable subgroup of $A$ containing the torsion. The definable group $A/A^{\sharp}$ embeds definably in a commutative unipotent algebraic group (i.e. a vector group) by Buium,  and results of Cassidy on differential algebraic vector groups yield a (non canonical) differential algebraic isomorphism between $A/A^{\sharp}$ and ${\Ga}_{a}^{n}$ where $n = dim(A)$, everything being defined over $K$. 
One can ask, among other things,  why the same $n$ can be chosen. The argument, as well as precise references to works of Buium and Cassidy, appears in Fact 1.10, Fact 1.13, and Lemma 1.14 of \cite{DGTII}, and on the face of it,  uses the ordinal-valued $U$-rank from model theory.  We sketch the argument, for completeness.  So $A/A^{\sharp}$ is a differential algebraic subgroup of some $({\cal U},+)^{m}$. Cassidy's classification of such groups says that
$A/A^{\sharp}$ is isomorphic (as a differential algebraic group, so also definably in the differential field $\cal U$) to some $({\cal U},+)^{r}\times {\cal T}$ where ${\cal T}$ is a finite-dimensional differential algebraic group. Now $A$ is connected with $U$-rank $\omega^{n}$, and $A^{\sharp}$ has finite $U$-rank. So $U$-rank inequalities give that $A/A^{\sharp}$ is connected, with $U$-rank $\omega^{n}$. This implies that ${\cal T}$ is trivial, hence $r=n$.  In any case we obtain a surjective differential algebraic homomorphism from $A$ to $({\cal U, +})^{n}$, which we call the Manin homomorphism. 
 
\vspace{2mm}

There is a somewhat more intrinsic account of this Manin map. 
Let $\tilde A$ be the universal vectorial extension of $A$ as discussed above, equipped with its unique algebraic $\partial$-group structure, and  let  $W_{A}$ be the unipotent part of $\tilde A$. We have the surjective differential algebraic homomorphism $\partial\ell n_{\tilde A}: {\tilde A} \to L\tilde A$. Note that if $\tilde y\in \tilde A$ lifts $y\in A$, then the image of $\tilde A$ under $\partial\ell n_{\tilde A}$, modulo the subgroup $\partial\ell n_{\tilde A}(W_{A})$ depends only on $y$. This gives a surjective differential algebraic homomorphism from $A$ to $L\tilde A/\partial\ell n(W_{A})$ which we call $\mu_{A}$.
\begin{Remark} Any abelian variety $A/K$ satisfies:  $Ker(\mu_{A}) = A^{\sharp}$.
\end{Remark}
\begin{proof} Let $U_{A}$ be the maximal algebraic subgroup of $W_{A}$ which is a $\partial$-subgroup of $\tilde A$. Then $\tilde A/U_{A}$ has the structure of an algebraic $\partial$-group, and as explained in \cite{LW}, the canonical map $\pi:\tilde A \to A$ induces an isomomorphism between $(\tilde A/U_{A})^{\partial}$ and $A^{\sharp}$.
As (by functoriality)
 $(\tilde A)^{\partial}$ maps onto $(\tilde A/U_{A})^{\partial}$,  $\pi:\tilde A \to A$ induces a surjective map $(\tilde A)^{\partial} \to A^{\sharp}$. 
Now as the image of $\mu_{A}$ is torsion-free, $ker(\mu_{A})$ contains $A^{\sharp}$. On the other hand, if $y\in ker(\mu_{A})$ and $\tilde y \in \tilde A$  lifts  $y$, then there is $z\in W_{A}$ such that $\partial\ell n_{\tilde A}(\tilde y) = \partial\ell n_{\tilde A}(z)$. So $\partial\ell n_{\tilde A}(\tilde y - z) = 0$ and $\pi(\tilde y - z) = y$, hence $y\in A^{\sharp}$.
\end{proof}
\noindent
Hence we call $\mu_{A}$ the (differential algebraic) Manin map. The target space embeds in an algebraic vector group hence has the structure of a $\cal C$-vector space which is unique  (any definable isomorphism between two commutative unipotent differential algebraic groups is an isomorphism of $\cal C$-vector spaces). 

\vspace{2mm}

Now assume that $K = \C(t)^{alg}$ and that $A$ is an abelian variety over $K$ with $\C$-trace $A_0 = 0$.
Then the ``model-theoretic/differential algebraic theorem of the kernel" is  (see Corollary K3 of \cite{LW}):
\begin{Fact} [$K = \C(t)^{alg}, A/K$   traceless] $Ker(\mu_{A})\cap A(K)$ is precisely the subgroup $Tor(A)$ of torsion points of $A$.
\end{Fact}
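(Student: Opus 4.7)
My plan is to establish the two inclusions separately.

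The easy direction $Tor(A) \subseteq Ker(\mu_A) \cap A(K)$ is immediate from Remark 2.11: since $A^\sharp$ is, by its defining property, the smallest Zariski-dense definable subgroup of $A$, it contains the Zariski-dense (and $K$-rational, as $K$ is algebraically closed) subgroup $Tor(A)$.

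For the reverse direction, my plan is to reduce to the classical theorem of the kernel (Manin--Chai) for traceless abelian varieties over $K = \C(t)^{alg}$, which asserts that $Ker(M_{K,A}) \cap A(K) = Tor(A)$ under the tracelessness hypothesis. Given $y \in A(K)$ with $\mu_A(y)=0$, I will fix a $K$-rational lift $\tilde y \in \tilde A(K)$ (which exists because $H^1(K, W_A) = 0$ for the vector group $W_A$) and set $a := \partial\ell n_{\tilde A}(\tilde y) \in L\tilde A(K)$. The class of $a$ modulo $\partial\ell n_{\tilde A}(W_A(K))$ is $M_{K,A}(y)$, while the hypothesis $\mu_A(y)=0$ reads exactly $a \in \partial\ell n_{\tilde A}(W_A(\mathcal{U}))$. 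So it suffices to upgrade this to $a \in \partial\ell n_{\tilde A}(W_A(K))$, giving $M_{K,A}(y) = 0$ and hence $y \in Tor(A)$ by Manin--Chai.

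The main obstacle will be this descent of $w \in W_A(\mathcal{U})$ from $\mathcal{U}$ to $K$, which I plan to handle by factoring through $U_A$. Since $U_A$ is precisely the algebraic kernel of $\partial\ell n_{\tilde A}|_{W_A}$, the induced map $W_A/U_A \to L\tilde A$ is injective, so the image $\bar w \in (W_A/U_A)(\mathcal{U})$ is the unique solution of the induced equation with $K$-rational right-hand side; uniqueness forces $Aut(\mathcal{U}/K)$-invariance and hence $K$-rationality of $\bar w$. Lifting $\bar w$ back to $w_1 \in W_A(K)$ (using vanishing $H^1(K, U_A)$) leaves a residual equation $\partial\ell n_{\tilde A}(u) = a - \partial\ell n_{\tilde A}(w_1) \in LU_A(K)$ to be solved for $u \in U_A(K)$, where Kolchin's vanishing of constrained cohomology for the unipotent $\partial$-group $U_A$ over the algebraically closed base $K$ --- the same mechanism used in the proof of Fact 2.1 to produce $K$-rational gauge transformations --- provides the desired $u$. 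Setting $w' = w_1 + u \in W_A(K)$ then yields $\partial\ell n_{\tilde A}(w') = a$, so $M_{K,A}(y) = 0$ and $y \in Tor(A)$. The detailed implementation of each step occupies Corollary K3 of \cite{LW}.
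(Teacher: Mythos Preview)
The paper itself does not prove Fact 2.12: it is quoted from Corollary K3 of \cite{LW}, exactly the reference you invoke in your last line. So there is no in-paper proof to compare your sketch against, and your ultimate deferral to that source matches the text.

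That said, your sketch contains a genuine gap at the last step. You want $u\in U_A(K)$ with $\partial\ell n_{\tilde A}(u)=a'$ for a given $a'\in LU_A(K)$, and you appeal to ``Kolchin's vanishing of constrained cohomology \dots\ the same mechanism used in the proof of Fact 2.1''. But in Fact 2.1 the mechanism is that an $H$-coset in $G$ (for $H$ a connected \emph{algebraic} group) defined over $K$ has a $K$-rational point --- essentially the vanishing of algebraic $H^1(K,H)$ over an algebraically closed $K$. What you need is that the fibre of $\partial\ell n_{U_A}$ over $a'$, a coset of the \emph{differential algebraic} group $U_A^\partial$, has a $K$-point; and this is false in general. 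Already for $U_A=\Ga$ with the trivial $\partial$-structure the equation reads $\partial u = a'$, and for $a'=1/t$ there is no solution in $K=\C(t)^{alg}$. So your descent from $W_A({\cal U})$ to $W_A(K)$ breaks down, and with it your route to $M_{K,A}(y)=0$. (Two smaller slips earlier do not affect the logic but should be fixed: $M_{K,A}(y)$ is the class of $a$ modulo $\nabla(L\tilde A(K))$, not modulo $\partial\ell n_{\tilde A}(W_A(K))$; and $U_A$ is not literally the kernel of $\partial\ell n_{\tilde A}|_{W_A}$ --- what is true, and what your argument actually uses, is that the induced map $W_A/U_A \to L\tilde A/LU_A$ has trivial kernel, which follows from the maximality of $U_A$.)
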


In section 5  we generalize Fact 2.12  by proving: 

\begin{Theorem} 
[$K = \C(t)^{alg}, A/K$  traceless] 
Suppose $y_{1},..,y_{n}\in A(K)$, $a_{1},$ $..,a_{n}\in \C$ are not all $0$, and $a_{1}\mu_{A}(y_{1}) + ... + a_{n}\mu_{A}(y_{n}) = 0 \in L\tilde A(K)/\partial \ell n_{\tilde A}(W_{A})$. Then $y_{1},..,y_{n}$ are linearly dependent  over $End(A)$.
\end{Theorem}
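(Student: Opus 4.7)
The plan is to prove the contrapositive: assume $y_1, \dots, y_n \in A(K)$ are linearly independent over $End(A)$ modulo torsion, and show that any relation $\sum_{i=1}^n a_i \mu_A(y_i) = 0$ with $a_i \in \C$ forces all $a_i = 0$. First I would lift each $y_i$ to some $\tilde y_i \in \tilde A(K)$ (possible since $K$ is algebraically closed, so the $W_A$-torsor $\tilde A \to A$ splits on $K$-points), set $\tilde x_i := \partial\ell n_{\tilde A}(\tilde y_i) \in L\tilde A(K)$, and let $\bar x_i \in LA(K)$ denote its image. Since $\partial\ell n_{\tilde A}$ restricts to a surjection $W_A \to LW_A$ of differential algebraic groups on $\cal U$-points, the quotient $L\tilde A/\partial\ell n_{\tilde A}(W_A)$ identifies with $LA$, and under this identification $\mu_A(y_i) = \bar x_i$; the hypothesis then reads $\sum a_i \bar x_i = 0$ in $LA(K)$. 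By Poincar\'e reducibility I would further reduce to the case where $A$ is isotypic, since the hypothesis distributes across simple isogeny factors and $End(A)$-dependence of $(y_i)$ pulls back from $End(B_j)$-dependence of the projections to each simple factor $B_j$.

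Next I would pass to the analytic picture to exploit the hypothesis. Over a small disk $U$ in the base curve, choose analytic lifts $u_i: U \to L\tilde A$ with $\exp_{\tilde A}(u_i) = \tilde y_i$. The canonical $\partial$-structure on $\tilde A$ is compatible, via $\exp_{\tilde A}$, with the Gauss-Manin connection on $L\tilde A$, so that $\partial_{L\tilde A}(u_i) = \tilde x_i$. Projecting to $LA$ and writing $\bar u_i$ for the image of $u_i$, the hypothesis becomes $\partial_{LA}\bigl(\sum a_i \bar u_i\bigr) = \sum a_i \bar x_i = 0$, so $\sum a_i \bar u_i$ lies in the horizontal subspace $LA^{\partial}(\cal U)$, which for the traceless $A$ is the $\C$-span $\Lambda_A \otimes_{\Z}\C$ of the period lattice.

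Finally I would apply the logarithmic case of the nonconstant Ax-Schanuel theorem for the traceless abelian variety $A$ (the content of \cite{BMPZ}, derivable from the Galois theoretic Corollary 2.9): for $y_1, \dots, y_n \in A(K)$ that are linearly independent over $End(A)$ modulo torsion, the analytic logarithms $\bar u_1, \dots, \bar u_n$ are $\C$-linearly independent modulo $LA(K) + LA^{\partial}(\cal U)$. Since our $\sum a_i \bar u_i$ lies in $LA^{\partial}(\cal U)$, this forces all $a_i = 0$, contradicting the hypothesis and completing the proof. The hard part will be establishing this logarithmic Ax-Schanuel input in precisely the form needed: the mechanism is to apply Corollary 2.9 to the product $A^n$ with $x = (\bar x_1, \dots, \bar x_n) \in LA^n(K)$, compute the Galois group of the associated exponential-type solution as $\tilde B^{\partial}$ for the minimal abelian subvariety $B \subseteq A^n$ over $K$ with $x \in LB(K)$, and then use the strong minimality of $A^\sharp$ for $A$ simple traceless (cf.\ \cite{HS}, \cite{BBP-ML}) together with the injectivity of $\mu_A$ modulo torsion (Fact 2.12) to translate any $\C$-linear relation among the $\bar u_i$'s into an $End(A)$-linear dependence among the $y_i$'s, closing the loop.
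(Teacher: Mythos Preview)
Your overall strategy is right in spirit---use a Galois-theoretic transcendence result on $A^n$ to force the relation to be trivial---but there is a genuine gap in the execution, and the key input is misidentified.

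The identification $L\tilde A/\partial\ell n_{\tilde A}(W_A) \cong LA$ is \emph{false} in general. You claim $\partial\ell n_{\tilde A}$ restricts to a surjection $W_A \to LW_A$, but under the identification $W_A = LW_A$ the restriction $\partial\ell n_{\tilde A}|_{W_A}$ equals $\nabla_{L\tilde A}|_{LW_A}$, and this lands in $LW_A$ only when $LW_A$ is $\nabla$-stable, i.e.\ when $U_A = W_A$. When $U_A \neq W_A$ (precisely the situation of the paper's counterexample in \S5.2, where $W_A + \nabla(W_A)$ is strictly larger than $W_A$), the image $\partial\ell n_{\tilde A}(W_A) = \nabla(LW_A)$ is not $LW_A$, so your projection to $LA$ and the subsequent use of ``$\partial_{LA}$'' and ``$LA^\partial$'' do not make sense: $LA$ carries no canonical $\partial$-module structure inherited from $L\tilde A$. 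Consequently your formulation of the logarithmic statement (``the $\bar u_i$ are $\C$-linearly independent modulo $LA(K) + LA^\partial$'') is not even well-posed.

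The paper avoids this entirely by staying inside $L\tilde A$. From $\sum a_i \partial\ell n_{\tilde A}(\tilde y_i) = \partial\ell n_{\tilde A}(z)$ with $z \in W_A$, one rewrites the right side as $\nabla(z)$ with $z \in LW_A \subset L\tilde A$; then choosing $\tilde x_i \in L\tilde A(K^{diff})$ with $\nabla(\tilde x_i) = \partial\ell n_{\tilde A}(\tilde y_i)$ (your $u_i$), one gets $\sum a_i \tilde x_i - d = z \in LW_A$ for some $d \in (L\tilde A)^\partial$. The transcendence input is Fact~5.1 (the \emph{logarithmic} case, from \cite{Bertrand-descent}), not Corollary~2.9, which is the exponential/Lindemann direction and goes the wrong way. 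Fact~5.1 applied to $A^n$ gives that $(\tilde x_1,\dots,\tilde x_n)$ is generic in $(L\tilde A)^n$ over $K^\sharp_{L\tilde A}$, so $\sum a_i \tilde x_i - d$ is generic in $L\tilde A$ and cannot lie in the proper subspace $LW_A$---contradiction. Your reduction to the isotypic case is unnecessary; Fact~5.1 handles arbitrary traceless $A$ directly.
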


Note that on reducing to a simple abelian variety,
 Fact 1.12 is the special case when $n=1$.   
Hrushovski asked whether the conclusion of Theorem 2.13 can be strengthened to the linear dependence  of $y_{1},..,y_{n}$ over $\Z$.  Namely is the extension $\mu_{A}\otimes 1$ of $\mu_A$ to $A(K)\otimes_{\Z}\C$  injective ? 
We found that an example of Yves Andr\'e (see \cite{LW}, p. 504, as well as \cite{LB}, IX.6) of a traceless abelian variety $A$ with  $U_{A} \neq W_{A}$ yields a counterexample. Namely:

\begin{Proposition} There exist  

- a simple traceless 4-dimensional abelian variety $A$ over $K = \C(t)^{alg}$, such that $End(A)$ is an order in a CM number-field $F$ of degree 4 over $\Q$,

-  four points $y_1,...,  y_4$ in $A(K)$ which are linearly dependent over  $End(A)$, but linearly independent over $\Z$, 

- and four complex numbers $a_1, ..., a_4$, not all zero,

\noindent
such that   $a_1 \mu_A(y_1) + ... + a_4 \mu_A(y_4) = 0$.
\end{Proposition}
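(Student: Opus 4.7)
The plan is to exhibit the promised counterexample using Andr\'e's construction. Let $A/K$ be the simple $4$-dimensional traceless abelian variety of \cite{LB}, IX.6 (see also \cite{LW}, p.~504), whose endomorphism ring $End(A)$ is an order in a CM field $F$ of degree $4$ over $\Q$. The essential feature of this example is that the maximal algebraic $\partial$-subgroup $U_A$ of the unipotent part $W_A$ of $\tilde A$ is strictly smaller than $W_A$.

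First, we observe that $\mu_A$ is $End(A)$-equivariant: each $\phi \in End(A)$ lifts uniquely to an endomorphism $\tilde\phi$ of $\tilde A$ preserving its unique $\partial$-group structure, hence preserving $W_A$ and commuting with $\partial\ell n_{\tilde A}$. Thus $End(A)$ acts on $T := L\tilde A / \partial\ell n_{\tilde A}(W_A)$, and because $T$ is a $\C$-vector space this action extends to an action of $F \otimes_\Q \C \cong \C^4$, decomposed via the four complex embeddings $\sigma_1,\dots,\sigma_4$ of $F$.

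Next, pick $y \in A(K)$ of infinite order (such a $y$ is provided by Andr\'e's construction). Since $A$ is simple, $F \cdot y$ has $\Q$-dimension $4$; choosing a $\Q$-basis $\phi_1,\dots,\phi_4$ of $F$ inside $End(A)$, the points $y_i := \phi_i y \in A(K)$ are $\Z$-independent while automatically $End(A)$-dependent (for instance $\phi_2 y_1 - \phi_1 y_2 = 0$). It then suffices to show that the $\C$-linear map
\[
\rho_y : F \otimes_\Q \C \longrightarrow T, \qquad \alpha \longmapsto \alpha \cdot \mu_A(y),
\]
has nontrivial kernel. Indeed, if $\alpha = \sum_i a_i \otimes \phi_i$ is a nonzero element of $\ker \rho_y$, then by the $End(A)$-equivariance of $\mu_A$,
\[
0 = \rho_y(\alpha) = \sum_i a_i \, \phi_i \cdot \mu_A(y) = \sum_i a_i \, \mu_A(\phi_i y) = \sum_i a_i \, \mu_A(y_i),
\]
which is the desired $\C$-linear relation.

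The main obstacle is therefore to show that $\rho_y$ is not injective. Under the splitting $F \otimes \C \cong \prod_i \C_{\sigma_i}$, the $F \otimes \C$-module $T$ decomposes into isotypic components, and it is enough to exhibit at least one component on which the projection of $\mu_A(y)$ vanishes. This is precisely where Andr\'e's construction intervenes: the strict inclusion $U_A \subsetneq W_A$ reflects a nontrivial action of the Gauss--Manin connection on $W_A/U_A$ that respects the CM-structure in such a way that the image $\partial\ell n_{\tilde A}(\tilde A(K)) \subset L\tilde A$ is misaligned with the CM-decomposition of $L\tilde A$. After passing to the quotient $T$, $\mu_A(y)$ ends up supported on a proper subset of the four isotypic pieces, and any nonzero $\C$-linear form on $F \otimes \C$ vanishing on this support yields a nonzero element of $\ker \rho_y$ whose coordinates $(a_1,\dots,a_4)$ against the basis $\phi_1,\dots,\phi_4$ give the required coefficients.
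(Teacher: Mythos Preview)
Your overall strategy coincides with the paper's: take a non-torsion $y\in A(K)$, set $y_i=\alpha_i y$ for an integral basis $\alpha_1,\dots,\alpha_4$ of $F$, and use the $F$-module structure to produce a $\C$-linear relation among the $\mu_A(y_i)$. The paper carries this out by working in $L\tilde A$ rather than in $T$: it writes a logarithm $\tilde x$ of $\tilde y$, observes that $\tilde x_i:=\rho(\alpha_i)\tilde x$ satisfies $\nabla\tilde x_i=\partial\ell n_{\tilde A}(\tilde y_i)$, and then finds $a_i$ so that $\sum a_i\tilde x_i\in U_A$, which forces $\sum a_i\mu_A(y_i)=0$. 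Your reformulation via the isotypic decomposition of $T$ is equivalent, and the kernel element you are looking for is precisely the $\sigma_2$-idempotent of $F\otimes\C$.

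However, your last paragraph is where the actual content of the proof should be, and there it is only asserted, not proved. Saying that $\partial\ell n_{\tilde A}(\tilde A(K))$ is ``misaligned with the CM-decomposition'' and that $\mu_A(y)$ ``ends up supported on a proper subset of the four isotypic pieces'' is not an argument. What is needed is the explicit CM-type of Andr\'e's family: $S=\{\sigma_1,\bar\sigma_1,2\sigma_2\}$ describes the $F$-representation on $W_A$, so $W_A=D_{\sigma_1}\oplus D_{\bar\sigma_1}\oplus P_{\sigma_2}$ and $L\tilde A=P_{\sigma_2}\oplus\Pi_{\sigma_1}\oplus\Pi_{\bar\sigma_1}\oplus P_{\bar\sigma_2}$. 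The entire $\sigma_2$-isotypic piece of $L\tilde A$ is the plane $P_{\sigma_2}$, and because $\nabla$ commutes with $\rho$, $P_{\sigma_2}$ is a $\nabla$-submodule of $W_A$, hence contained in $U_A$. Since $\nabla\!\mid_{U_A}:U_A\to U_A$ is surjective on points, $U_A\subset\partial\ell n_{\tilde A}(W_A)$, so the $\sigma_2$-component of $T=L\tilde A/\partial\ell n_{\tilde A}(W_A)$ is zero. That is the fact that makes $\rho_y$ non-injective; without it your proof is incomplete. Equivalently, in the paper's formulation: the representation of $F$ on $L\tilde A/U_A$ is $(\sigma_1\oplus\bar\sigma_1\oplus\bar\sigma_2)^{\oplus 2}$, missing $\sigma_2$, so four elements of $F$ become $\C$-linearly dependent there.

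Two smaller points. First, the ``essential feature'' you cite, $U_A\subsetneq W_A$, merely says $A$ is non-constant; what you actually need is $U_A\neq 0$, and more precisely that $U_A$ contains a full isotypic component of $L\tilde A$. Second, the existence of a non-torsion $y\in A(K)$ is not a formality---it does not come ``from Andr\'e's construction'' automatically and deserves a word (e.g.\ base-changing along a suitable cover of the Shimura curve, or invoking that $A(K)$ has positive rank for a non-isotrivial family over a curve).
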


\noindent
 In fact, for $i = 1, ..., 4$, we will construct lifts $\tilde y_i \in \tilde A(K)$ of the points $y_i$, and solutions $\tilde x_i \in L\tilde A(K^{diff})$ to the equations $\nabla(\tilde x_i) = \partial \ell n_{\tilde A} \tilde y_i$ (where we have set $\nabla := \nabla_{L\tilde A} =  \partial_{L\tilde A}$, with $\nabla_{|LW_A} = \partial \ell n_{\tilde A | W_A}$ in the identification $W_A = LW_A$), and will find a non-trivial relation
$$a_1 \tilde x_1 + ... + a_4 \tilde x_4 := u \in U_A(K^{diff}) \qquad ({\frak R}).$$
 Since $U_A$ is a $\nabla$-submodule of $L\tilde A$, this implies that 
$a_1 \partial \ell n_{\tilde A} \tilde y_1 + ... + a_4 \partial \ell n_{\tilde A} \tilde y_4$  lies in $U_A$.  And  since $U_A \subseteq W_A$, this in turn shows  that 
$$a_1 \mu_A(y_1) + ... + a_4 \mu_A(y_4) = 0 ~ {\rm in} ~ L\tilde A / \partial \ell n_{\tilde A}(W_A),$$ 
contradicting the injectivity of $\mu_A \otimes 1$. 

\vspace{2mm}

We conclude with a remark on the more classical  {\it differential  arithmetic}  Manin map $M_{K,A}$\;, where the stronger version {\it is} true.
Again $A$ is an abelian variety over $K = \C(t)^{alg}$ with $\C$-trace $0$. As above, we let $\nabla$ denote $\partial_{L\tilde A}:L\tilde A \to L\tilde A$.  The map $M_{K,A}$ is then the
homomorphism from $A(K)$ to   $L\tilde A(K)/\nabla(L\tilde A(K))$ which attaches to a point $y \in A(K)$ the class  $M_{K,A}(y)$ of $\partial\ell n_{\tilde A}(\tilde y)$ in  $L\tilde A(K)/\nabla(L\tilde A(K))$, for any 
lift $\tilde y$ of $y$ to $\tilde A(K)$. This class is independent of the lift, since $\partial \ell n_{\tilde A}$ and $\partial_{L\tilde A}$ coincide on $W_A = LW_A$. Again $L\tilde A(K)/\nabla(L\tilde A(K))$ is a $\C$-vector space. The initial theorem of Manin (see \cite{Coleman})
says that $Ker(M_{K,A}) = Tor(A) + A_{0}(\C)$, so in the traceless case is precisely $Tor(A)$.

\begin{Proposition} 
[$K = \C(t)^{alg}, A/K$   traceless] 
 The $\C$-linear extension
 $M_{K,A}\otimes 1: A(K)\otimes_{Z}\C \to L\tilde A(K)/\nabla(L\tilde A(K))$ is injective. 
\end{Proposition}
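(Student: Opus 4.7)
By Lang--N\'eron, since $A/K$ is traceless, $A(K)$ is finitely generated, and by Fact~2.12 (classical Manin), $\mathrm{Ker}(M_{K,A}) = \mathrm{Tor}(A)$. Thus $M_{K,A}$ embeds the free $\Z$-module $A(K)/\mathrm{Tor}(A)$ of rank $r:=\mathrm{rank}_{\Z}A(K)$ into the $\C$-vector space $V := L\tilde{A}(K)/\nabla(L\tilde{A}(K))$, and $M_{K,A}\otimes 1$ is injective if and only if $M_{K,A}(y_1),\ldots,M_{K,A}(y_r)$ are $\C$-linearly independent in $V$ for a $\Z$-basis $y_1,\ldots,y_r$ of $A(K)/\mathrm{Tor}(A)$. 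The plan is to argue by contradiction: assume $\sum_{i=1}^r a_i M_{K,A}(y_i) = 0$ in $V$ with $a_i\in\C$ not all zero, and derive a contradiction.

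I would first lift the $y_i$ to $\tilde{y}_i \in \tilde{A}(K)$ and set $b_i := \partial\ell n_{\tilde{A}}(\tilde{y}_i) \in L\tilde{A}(K)$, so that the hypothesis reads $\sum a_i b_i = \nabla(v)$ for some $v \in L\tilde{A}(K)$. Picking any $\tilde{x}_i \in L\tilde{A}(K^{diff})$ with $\nabla(\tilde{x}_i) = b_i$, the element $c := \sum a_i \tilde{x}_i - v$ is annihilated by $\nabla$ and so lies in $(L\tilde{A})^\partial$, yielding the central relation
$$\sum_{i=1}^r a_i\,\tilde{x}_i \;=\; v + c, \qquad v \in L\tilde{A}(K),\; c \in (L\tilde{A})^\partial. \qquad (*)$$

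To contradict $(*)$ I would apply differential Galois theory to the $\partial$-module $L\tilde{A}$. The $\C$-linear map $\Pi\colon (L\tilde{A})^r \to L\tilde{A}$, $(u_i)\mapsto \sum a_i u_i$, is a morphism of $\partial$-modules because the $a_i$ are $\partial$-constants. Since $v \in L\tilde{A}(K)$ and $c \in (L\tilde{A})^\partial \subseteq L\tilde{A}(K^\sharp)$ are fixed by every $\sigma \in \mathrm{Aut}(K^{diff}/K^\sharp)$, applying $\sigma$ to $(*)$ yields $\sum a_i c_i(\sigma) = 0$ for the Galois cocycle $c_i(\sigma) := \sigma(\tilde{x}_i) - \tilde{x}_i \in (L\tilde{A})^\partial$. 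Hence, by Fact~2.2(ii) applied to the commutative $\partial$-group $(L\tilde{A})^r$ with data $(b_1,\ldots,b_r)$, the smallest $\partial$-submodule $H \subseteq (L\tilde{A})^r$ over $K^\sharp$ satisfying $(b_1,\ldots,b_r) \in H + \nabla\bigl((L\tilde{A})^r(K^\sharp)\bigr)$ must be contained in $\ker\Pi \subsetneq (L\tilde{A})^r$. The plan is then to show that the $\Z$-linear independence of the $y_i$ modulo torsion, combined with the $K$-largeness of $\tilde{A}$ for traceless $A$ (as in~\cite{BBP-ML} and the descent arguments in the proofs of Proposition~2.5 and Corollary~2.9), forces $H = (L\tilde{A})^r$, yielding the desired contradiction.

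The hardest step will be the Galois descent needed to control $H$: Fact~2.2(ii) naturally gives data over $K^\sharp$, yet the $\Z$-linear independence hypothesis lives at the level of $K$, and the crucial input $v$ must be genuinely $K$-rational (not merely $K^\sharp$-rational). This $K$-rationality is precisely what distinguishes $M_{K,A}$ from $\mu_A$, and is the place where the logarithmic case of nonconstant Ax--Schanuel on $L\tilde{A}$ enters. The subtlety is well illustrated by Proposition~2.14: in Andr\'e's CM example, the element playing the role of ``$v+c$'' lies in $U_A(K^{diff})$ but fails to lie in $L\tilde{A}(K)+(L\tilde{A})^\partial$, so the descent for $M_{K,A}$ must exploit the genuine $K$-rationality of $v$ to preclude such behaviour.
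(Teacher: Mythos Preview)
Your setup through relation $(*)$ is correct and matches the paper's. The gap is in the step you flag as ``the plan'': you assert that $\Z$-linear independence of $y_1,\ldots,y_r$ (together with $K$-largeness) will force $H=(L\tilde A)^r$. This is false in general. By Fact~5.1, the Galois group of $K^{\sharp}_{L\tilde A}(\tilde x_1,\ldots,\tilde x_r)$ over $K^{\sharp}_{L\tilde A}$ is $(L\tilde B)^\partial$, where $B$ is the connected component of the Zariski closure of $\Z\cdot(y_1,\ldots,y_r)$ in $A^r$; when $End(A)\neq\Z$, the points $y_i$ can be $\Z$-independent yet $End(A)$-dependent, so $B\subsetneq A^r$ and $H\subsetneq (L\tilde A)^r$. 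Your contradiction therefore does not fire. Relatedly, your appeal to ``$K$-largeness of $\tilde A$ for traceless $A$'' is unwarranted: $K$-largeness requires $U_A=0$, which fails precisely in the Andr\'e example behind Proposition~2.14.

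What the paper actually does (after reducing to $A$ simple) is not to enlarge $H$ but to exploit the inclusion $H=(L\tilde B)^\partial\subseteq\ker\Pi$ that your Galois computation yields. Since $A$ is simple, $B$ is generated by images $\alpha(A)$ for $\alpha=(\alpha_1,\ldots,\alpha_n)\in End(A)^n$ with $\alpha(A)\subseteq B$; the inclusion then reads $\sum_i a_i\,\rho(\alpha_i)=0$ in $End_\C\bigl((L\tilde A)^\partial\bigr)$, where $\rho$ is the action of $D:=End(A)\otimes\Q$ on $(L\tilde A)^\partial$. The decisive new ingredient, entirely absent from your proposal, is that $\rho$ is defined over $\Q$ (it respects the Betti lattice $H_1(A,\Z)$), hence is a sum of copies of the reduced representation $R$ of $D$, and $R\otimes 1\colon D\otimes_\Q\C\hookrightarrow Mat_{ef}(\C)$ is injective. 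Writing $\sum_i a_iR(\alpha_i)=(R\otimes 1)\bigl(\sum_i\alpha_i\otimes a_i\bigr)=0$ and using that the $a_i$ may be assumed $\Z$-linearly independent, one gets $\alpha_i=0$ for all $i$, hence $B=0$ and $(y_1,\ldots,y_r)$ is torsion --- the desired contradiction. Your intuition that the $K$-rationality of $v$ is what distinguishes $M_{K,A}$ from $\mu_A$ is on the right track (it guarantees $v\in K^\sharp$ so that the Galois step goes through), but the substantive missing idea is this $\Q$-rationality of the $End(A)$-representation, not any further Galois descent.
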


\section{Computation of Galois groups}

Here we prove the Galois theoretic statements   2.5, 2.6  and 2.7 announced in \S 2.2. We assume throughout that  $K = \C(t)^{alg}$.

\subsection{The abelian case}

Let us first set up the notations.  Let $A$ be an abelian variety over $K$, and let $A_{0}$ be its $\C$-trace, which we view as a subgroup of $A$ defined over $\C$. Let $\tilde A$ be the universal vectorial extension of $A$. We have the short exact sequence
$0\to W_{A}\to \tilde A \to A \to 0$. 
Let $U_{A}$ denote the (unique) maximal $\partial$-subgroup of $\tilde A$ contained in $W_{A}$.  By Remark 7.2 of \cite{Bertrand-Fourier}, we have:

\begin{Fact} ${\tilde A}^{\partial}(K^{diff}) = \tilde A_{0}(\C) + Tor(\tilde A) + U_{A}^{\partial}(K^{diff})$. 
\end{Fact}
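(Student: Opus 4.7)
The inclusion $\supseteq$ is immediate, as each of $\tilde A_0(\C)$, $\mathrm{Tor}(\tilde A)$, and $U_A^\partial(K^{diff})$ is visibly contained in $\tilde A^\partial(K^{diff})$. For the converse, I would first record the structural observation that $\tilde A^\partial \cap W_A = U_A^\partial$: the composition of $\partial\ell n_{\tilde A}|_{W_A}$ with the projection $L\tilde A \to LA$ is an algebraic homomorphism whose kernel is precisely $U_A$, by the defining property of $U_A$ as the largest algebraic subgroup of $W_A$ stable under the $\partial$-structure of $\tilde A$; and on $U_A$ itself, $\partial\ell n_{\tilde A}$ agrees with $\nabla_{U_A}$, whose kernel is $U_A^\partial$.

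Now fix $\tilde y \in \tilde A^\partial(K^{diff})$. Its image in $(\tilde A/U_A)(K^{diff})$ lies in $(\tilde A/U_A)^\partial$, and invoking the identification $(\tilde A/U_A)^\partial \cong A^\sharp$ recalled in the proof of Remark~2.11, the further image $y := \pi(\tilde y) \in A(K^{diff})$ belongs to $A^\sharp(K^{diff})$.

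The heart of the matter, and the main obstacle, is to establish that $A^\sharp(K^{diff}) = A_0(\C) + \mathrm{Tor}(A)$. After the isogeny decomposition $A \sim A_0 \times A_1$ with $A_1$ traceless, the constant case $A_0^\sharp = A_0(\C)$ is direct. For the traceless factor, I would invoke the theorem of Hrushovski--Sokolovic (or the alternative treatment of \cite{BBP-ML}) asserting that the Manin kernel of a simple traceless abelian variety over $K$ is strongly minimal and locally modular. Combined with the fact that the differential closure $K^{diff}$, being prime over $K$, realizes only isolated types, this forces every $K^{diff}$-point of $A_1^\sharp$ to be algebraic over $K$, hence to lie in $A_1(K)$; the theorem of the kernel (Fact~2.12) then identifies such points with $\mathrm{Tor}(A_1)$.

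To conclude, write $y = y_0 + y_t$ with $y_0 \in A_0(\C)$ and $y_t \in \mathrm{Tor}(A)$. Lift $y_0$ to $\tilde y_0 \in \tilde A_0(\C)$, which is possible since $\tilde A_0$ is defined over $\C$ and surjects onto $A_0$. Lift $y_t$ to some $\tilde y_t \in \mathrm{Tor}(\tilde A)$: starting from an arbitrary lift, divisibility and torsion-freeness of the vector group $W_A$ allow one to correct by an element of $W_A$ to obtain a torsion lift. The point $\tilde y' := \tilde y_0 + \tilde y_t$ then lies in $\tilde A^\partial(K)$, so $\tilde y - \tilde y' \in \tilde A^\partial \cap W_A = U_A^\partial$, which yields the desired decomposition of $\tilde y$.
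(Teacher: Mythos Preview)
Your proposal is correct and follows exactly the route the paper indicates: the paper does not prove Fact~3.1 in full but cites \cite{Bertrand-Fourier} and names as ingredients Chai's theorem of the kernel and the strong minimality of $A^{\sharp}$ for simple traceless $A$ (from \cite{HS} or \cite{BBP-ML}), which are precisely what you invoke.

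Two small technical remarks. First, your argument for $\tilde A^{\partial}\cap W_A = U_A^{\partial}$ via an ``algebraic homomorphism'' $W_A\to LA$ is not quite right as written, since $\partial\ell n_{\tilde A}$ involves the derivation; it is cleaner to deduce this intersection from the isomorphism $(\tilde A/U_A)^{\partial}\cong A^{\sharp}$ of Remark~2.11, which forces $(W_A/U_A)\cap(\tilde A/U_A)^{\partial}=0$. Second, local modularity plays no role in showing $A_1^{\sharp}(K^{diff})\subseteq A_1(K)$: what you need is that $A_1^{\sharp}$ is strongly minimal with infinitely many $K$-algebraic points (the torsion), so that its generic type is non-isolated and hence unrealized in the atomic model $K^{diff}$.
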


Let us briefly remark that the ingredients behind Fact 3.1 include Chai's theorem (see \cite{Chai} and  \S K of \cite{LW}), as well as the strong minimality of $A^{\sharp}$ when $A$ is simple and traceless from \cite{HS}. As already pointed out in connection with $K$-largeness,  the reference to \cite{HS} can be replaced by the easier arguments from \cite{BBP-ML}. 
Let $K_{\tilde A}^{\sharp}$ be the (automatically differential) field  generated over $K$ by ${\tilde A}^{\partial}(K^{diff})$, and likewise with  $K_{U_{A}}^{\sharp}$ for $(U_{A})^{\partial}(K^{diff})$.  
So by Fact 3.1 $K_{\tilde A}^{\sharp} = K_{U_{A}}^{\sharp}$.  Also, as recalled at the beginning of Section 8 of \cite{Bertrand-Fourier}, we have:
\begin{Remark} $K_{U_{A}}^{\sharp}$ is a Picard-Vessiot extension of $K$ whose Galois group (a linear algebraic group over $\C$) is semisimple. 
\end{Remark} 

\subsubsection*{Proof of Proposition 2.5}
Here, $G$ is  an {\em almost abelian} $\partial$-group over $K$, and for simplicity of notation
we assume $G = \tilde A$, as was announced for the semi-abelian case. 
Fix a copy $K^{diff}$ of the differential closure of $K$, and let $y\in G(K^{diff})$ be such that $a = \partial \ell n_{G}(y)$ lies in $LG(K)$.  Note that in the set-up of Conjecture 2.3, $y$ could be very well be an element of $U_{A}$, for instance when $a\in LU_{A} = U_{A}$, so in a sense we move outside the almost abelian context. 
In any case, let $H$ be a minimal connected $\partial$-subgroup of $G$ defined over $K$ such that $y\in H + G(K) + G^{\partial}(K^{diff})$.  
We will prove that $H^{\partial}(K^{diff})$ is the  
differential Galois group of $K^{\sharp}(y)$ over $K^{\sharp}$ where $K^{\sharp} = K^{\sharp}_{G}$. 
We recall from the comments after Fact 2.2 on the commutative case that we can and do assume that this Galois group is connected. Also, the statement   implies that $H$ is actually the smallest connected $\partial$-subgroup of $G$ over $K$ such that  $y\in H + G(K) + G^{\partial}(K^{diff})$, as required.

Let $H_{1}^{\partial}$ be the  
Galois group of $K^{\sharp}(y)$ over $K^{\sharp}$ with $H_{1}$ a $\partial$-subgroup of $G$ which on the face of it is defined over $K^{\sharp}$. Again 
by remarks after Fact 2.2, 
$H_{1}$ is a connected $\partial$-subgroup of $H$.  So we aim to show that $H = H_{1}$. 

\vspace{2mm}
\noindent
{\em Claim.} $H_{1}$ is defined over $K$ as an algebraic group.
\newline
{\em Proof.}  It is enough to show that $H_1^{\partial}$ is defined over $K$ as a differential algebraic group.  This is a very basic model-theoretic  argument, but may be a bit surprizing at the algebraic-geometric level, as $K^{\sharp}(y)$ need not be a ``differential Galois extension" of $K$ in any of the usual meanings. We use the fact that any definable (with parameters) set in the differentially closed field  $K^{diff}$ which is $Aut(K^{diff}/K)$-invariant, is definable over $K$. This follows from model-theoretic homogeneity of $K^{diff}$ over $K$ as well as elimination of imaginaries in $DCF_{0}$.  
Now $H_{1}^{\partial}(K^{diff})$ is the set of $g\in G^{\partial}(K^{diff})$ such that $y_{1}g$ and $y_{1}$ have the same type over $K^{\sharp}$ for some/any $y_{1}\in G(K^{diff})$ such that $\partial\ell n_{G}(y_{1}) = a$.  As $a\in LG(K)$ and $K^{\sharp}$ is setwise invariant under $Aut(K^{diff}/K)$ it follows that $H_{1}^{\partial}(K^{diff})$ is also $Aut(K^{diff}/K)$-invariant, so defined over $K$.  This proves the claim.

\vspace{2mm}
\noindent
Note that we may assume   $y\in H$ whereby $\partial\ell n_{G}(y) = a\in LH(K)$. 

\medskip
\noindent
Let $B$ be the image of $H$ in $A$, and $B_{1}$ the image of $H_{1}$ in $A$. So $B_{1}\leq B$ are abelian subvarieties of $A$. Let $V$ be the maximal 
unipotent $\partial$-subgroup of $H$, and $V_{1}$ the maximal unipotent subgroup of $H_{1}$. So $V_{1} \leq V$, and using the assumptions and 
the claim, everything is defined over $K$.  Note also that the surjective homomorphism $H\to B$ induces an isomorphism between $H/V$ and $\tilde 
B/U_{B}$ (where as above $U_{B}$ denotes the maximal unipotent $\partial$-subgroup of $\tilde B$). Likewise for $H_{1}/V_{1}$ and the quotient of 
$\tilde{B_{1}}$ by its maximal unipotent $\partial$-subgroup.  

\vspace{2mm}
\noindent
{\em Case (I).}  $B = B_{1}$. 
\newline
Then by the previous paragraph, we have a canonical isomorphism $\iota$ (of $\partial$-groups) between $H/H_{1}$ and $V/V_{1}$, defined over $K$, so there is no harm in identifying them, although we need to remember where they came from.  Let us denote $V/V_{1}$ by ${\overline V}$, a unipotent  $\partial$-group.  This isomorphism respects the logarithmic derivatives in the obvious sense. 
Let $\bar y$ denote the image of $y$ in $H/H_{1}$. So $\partial\ell n_{H/H_{1}}(\bar y) = \bar a$ where $\bar a$ is the image of $a$ in $L(H/H_{1})(K)$. 
Via $\iota$ we identify ${\bar y}$ with a point in ${\bar V}(K^{\sharp})$ and ${\bar a}$ with $\partial\ell n_{\bar V}({\bar y})\in L({\bar V})(K)$. 

\medskip
\noindent
By 3.2 we identify $Aut(K^{\sharp}/K)$
 with a group $J(\C)$ where $J$ is a semisimple algebraic group. We have a natural action of  $J(\C)$ on ${\bar V}^{\partial}(K^{diff}) = {\bar V}^{\partial}(K^{\sharp})$. Now the latter is a $\C$-vector space, and this action can be checked to be a (rational)  representation of $J(\C)$. On the other hand, 
for $\sigma\in J(\C)$, $\sigma({\bar y})$
 (which is well-defined since $\bar y$ is $K^{\sharp}$-rational) is also a solution of 
$\partial\ell n_{\bar V}(-) = \bar a$, hence $\sigma(\bar y) - {\bar y}\in {\bar V}^{\partial}(K^{diff})$. The map taking $\sigma$ to $\sigma({\bar y})-{\bar y}$   is then a cocycle $c$ 
from $J(\C)$ to $V^{\partial}(K^{diff})$ which is continuous for the  Zariski topologies. Now the appropriate $H^{1}(J(\C),  {\bar V}^{\partial}(K^{diff}))$ is trivial as it equals  
$Ext_{J(\C)}(1,{\bar V}^{\partial}(K^{diff}))$,  the group of isomorphism classes of extensions of the trivial representation of $J(\C)$ by  ${\bar V}^{\partial}(K^{diff})$. But $J(\C)$ is semisimple, so 
reductive, whereby every rational representation is completely reducible (see p.26 and 27 of \cite{Mumford}, and \cite{Bertrand-unipotent} for Picard-Vessiot applications, which actually cover the case when $a$ lies in $LU_A$).  Putting everything together the original cocycle  is trival. So  there is ${\bar z}\in {\bar V}^{\partial}
(K^{\sharp})$ such that $\sigma({\bar y}) - {\bar y} =  \sigma(z)-z$ for all $\sigma\in J(\C)$.  So $\sigma({\bar y} - {\bar z}) = {\bar y} - {\bar z}$ for all $\sigma$.  Hence 
${\bar y}-{\bar z}\in (H/H_{1})(K)$. Lift  ${\bar z}$ to a point $z\in H^{\partial}(K^{diff})$. So $\overline{y-z}\in {\bar V}(K)$. As $K$ is algebraically closed, there is $d\in H(K)$ such that  $y-z+d\in H_{1}$.  This contradicts   the minimal choice of $H$, unless $H = H_{1}$. So the proof is complete in Case (I).

\vspace{2mm}
\noindent
{\em Case (II).}  $B_{1}$  is a proper subgroup of $B$.
\newline
Consider the group $H_{1}.V$ a $\partial$-subgroup of $H$, defined over $K$, which also projects onto $B_{1}$.  It is now easy to extend $H_{1}.V$ to a 
$\partial$-subgroup $H_{2}$ of $H$ over $K$  such that $H/H_{2}$ is canonically isomorphic to $\overline B_{2}$, where $B_{2}$ is a simple abelian variety, and $\overline{B_{2}}$ denotes the quotient of $\tilde{B_{2}}$ by its maximal unipotent subgroup.  Now let ${\bar y}$ denote $y/H_{2}\in H/H_{2}$. 
Hence $\partial\ell n_{\overline B_{2}}({\bar y}) = {\bar a}\in L(\overline{B_{2}})(K)$.  As $H_{1}\subseteq H_{2}$, ${\bar y}\in {\overline B_{2}}(K^{\sharp})$.  Now we have two cases. If  $B_{2}$ descends to $\C$, then  ${\bar y}$ generates a strongly normal extension of $K$ with Galois group
 a connected algebraic subgroup of $B_{2}(\C)$. As this Galois group will be a homomorphic image of the linear (in fact semisimple) complex algebraic group $Aut(K^{\sharp}/K)$ we have a contradiction, unless ${\bar y}$ is $K$-rational. On the other hand, if $B_{2}$ does not descend to $\C$, then by Fact 2.2, ${\bar y}$ generates over $K$ a (generalized) differential Galois extension of $K$ with Galois group contained in ${\overline B_{2}}^{\partial}(K^{diff})$, which again will be a homomorphic image of a complex semisimple  linear algebraic group 
(cf. \cite{Bertrand-Fourier}, 8.2.i).  We get a contradiction by various possible means (for example as in Remarque 8.2 of \cite{Bertrand-Fourier}) unless  ${\bar y}$ is $K$-rational.  So either way we are forced into ${\bar y}\in (H/H_{2})(K)$. 
But then, as $K$ is algebraically closed, $y- d\in H_{2}$ for some $d\in H(K)$, again a contradiction. So Case (II) is impossible. This concludes the proof of 2.5. 

\subsection{The semiabelian case}
 
We now aim towards proofs of Theorems 2.6 and 2.7.  Here, $G$  denotes an almost semiabelian algebraic $\partial$-group over $K$.  As in the statements of 2.6 and 2.7 we make the notationally simplifying assumption that $G = \tilde B$ for $B$ a semiabelian variety over $K$, equipped with its unique algebraic $\partial$-group structure.

We have: 
\newline 
$0\to T \to B \to A\to 0$,  where $T$ is an algebraic torus and $A$ an abelian variety, all over $K$, 
\newline
$G= \tilde B = B\times_{A} \tilde A$, where $\tilde A$ is the universal vectorial extension of $A$, and
\newline
$0\to T \to G \to \tilde A \to 0$. 
\newline
We use the same notation for $A$ as at the beginning of this section,  namely
\newline
$0 \to W_{A}\to \tilde A \to A \to 0$. We denote by $A_{0}$  the $\C$-trace of $A$ (so up to isogeny we can write $A$ as a product $A_{0}\times A_{1}$, all defined over $K$, where $A_{1}$ has $\C$-trace $0$), and
by $U_{A}$   the maximal  $\partial$-subgroup of $\tilde A$ contained in $W_{A}$. So $U_A$  is a unipotent  subgroup of $G$, though not necessarily one of its $\partial$-subgroups. Finally, we  have the exact sequence:
$$0 \mapright{} T^{\partial} \mapright{} 
 G^{\partial} \mapright{\pi} {\tilde A}^{\partial} \mapright{} 0 ~.$$
Note that $T^{\partial} = T(\C)$. 
Let $K^{\sharp}_{G}$ be the (differential) field generated over $K$ by $G^{\partial}(K^{diff})$. We have already noted above that $K^{\sharp}_{\tilde A}$ equals  $K^{\sharp}_{U_{A}}$. So $K^{\sharp}_{U_{A}} < K^{\sharp}_{G}$, and we deduce from the last exact sequence above:
\begin{Remark} 
$G^{\partial}(K^{diff})$ is the union of the $\pi^{-1}(b)$ for $b\in {\tilde A}^{\partial}$, each $\pi^{-1}(b)$ being a coset of $T({\C})$ defined over $K^{\sharp}_{U_{A}}$.  Hence $K^{\sharp}_{G}$ is (generated by) a union of Picard-Vessiot extensions over $K^{\sharp}_{U_{A}}$ each with Galois group contained in $T(\C)$. 
\end{Remark}

\subsubsection*{Proof of Theorem 2.6}
Bearing in mind Proposition 2.5 we may assume that   $T = \Ga_{m}$. We have $a\in LG(K)$ and $y\in G(K^{diff})$ such that $\partial\ell n_{G}(y) = a$, and that $y \notin H + G(K)$ for any proper $\partial$-subgroup $H$ of $G$. The latter is a little weaker than the condition that $a \notin LH(K) + \partial\ell n_{G}(G(K))$ for any proper $H$, but 
(thanks to Fact 3.1) will suffice for the special case we are dealing with.
 
Fix a solution $y$ of $\partial\ell n_{G}(-) = a$ in $G(K^{diff})$ and let $H^{\partial}(K^{diff})$ be the differential Galois group
 of $K^{\sharp}_{G}(y)$ over $K^{\sharp}_G$. As said after Fact 2.2, 
 there is no harm in assuming that $H$ is connected. 
 So $H$ is a connected $\partial$-subgroup of $G$, defined over $K^{\sharp}_{G}$.  

\vspace{2mm}
\noindent
As in the proof of the claim in the proof of Proposition 2.5 we have:

\vspace{2mm}
\noindent
{\em Claim 1.} $H$ (equivalently $H^{\partial}$) is defined over $K$.

\vspace{2mm}
\noindent
We assume for a contradiction that $H\neq G$. 
\newline
{\em Case (I).} $H$ maps onto a proper ($\partial$-)subgroup of $\tilde A$. 
\newline
This is similar to the Case (II)  in the proof of Proposition 2.5 above. Some additional complications come from the structure of $K_{G}^{\sharp}$.  We willl make use of Remark 3.3 all the time.
\newline
As $\tilde A$ is an essential extension of $A$ by $W_{A}$, it follows that we can find a connected $\partial$-subgroup $H_{1}$ of $G$ containing $H$ and defined over $K$ such that the surjection $G\to \tilde A$ induces an isomorphism between $G/H_{1}$ and $\overline {A_{2}}$, where $A_{2}$ is a simple abelian subvariety of $A$ (over $K$ of course) and $\overline{A_{2}}$ is the quotient of $\tilde{A_{2}}$ by its maximal unipotent $\partial$-subgroup. 
The quotient map taking $G$ to $\overline{A_{2}}$ takes $y$ to $\eta$ say and also induces  a surjection $LG \to L(\overline{A_{2}})$ which takes $a$ to 
$\alpha \in L(\overline{A_{2}})$ say. 

\vspace{2mm}
\noindent
As $\eta = y/H_{1}$ and $H\subseteq H_{1}$, we see that $\eta$ is fixed by $Aut(K^{\sharp}_{G}(y)/K^{\sharp}_{G})$, hence

\vspace{2mm}
\noindent
{\em Claim 2.}  $\eta \in \overline{A_{2}}(K^{\sharp}_{G})$. 

\vspace{2mm}
\noindent
On the other hand $\eta$ is a solution of the logarithmic differential equation $\partial\ell n_{\overline{A_{2}}}(-) = \alpha$ over $K$, and 
using one of the ingredient in the proof of Fact 3.1,   $K^{\sharp}_{\overline{A_{2}}} = K$, hence $K(\eta)$ is a differential Galois extension of $K$ whose Galois group is either trivial (in which case $\eta \in \overline{A_{2}}(K)$), or equal to $\overline{A_{2}}^{\partial}(K^{diff})$. 

\vspace{2mm}
\noindent
{\em Claim 3.}  $\eta\in \overline{A_{2}}(K)$. 
\newline
{\em Proof.} Suppose not. We first claim that $\eta$ is independent from $K^{\sharp}_{U}$ over $K$ (in the sense of differential fields). 
Indeed, the Galois theory would otherwise give  us some proper definable subgroup in the product  
of $\overline{A_{2}}^{\partial}(K^{diff})$ by the Galois group of $K^{\sharp}_{U}$ over $K$ (or equivalently, these two groups would share a non-trivial definable quotient). As the latter is a complex semisimple algebraic group (Remark 3.2),
 we get a contradiction.  Alternatively we can proceed as in Remarque 8.2 of \cite{Bertrand-Fourier}. 
\newline
So the Galois group of $K^{\sharp}_{U}(\eta)$ over $K^{\sharp}_{U}$ is  $\overline{A_{2}}^{\partial}(K^{diff})$. 
 As there are no nontrivial  
 definable subgroups of $\overline{A_{2}}(K^{diff}) \times \Ga_{m}(\C)^{n}$,  we see that $\eta$ is independent of $K^{\sharp}_{G}$ over $K^{\sharp}_{U}$ contradicting Claim 2. 

\vspace{2mm}
\noindent
By Claim 3, the coset of $y$ modulo $H_{1}$ is defined over $K$ (differential algebraically), so as in the proof of Fact 2.1,  as $K$ is algebraically closed there is $y_{1}\in G(K)$ in the same coset of 
$H_1$ as $y$. So $y\in H_1 + G(K)$, contradicting the assumptions. So Case (I) is complete.

\vspace{2mm}
\noindent
{\em Case (II).} $H$ projects on to $\tilde A$. 
\newline
Our assumption that $H$ is a proper subgroup of $G$ and that the toric part is $\Ga_{m}$ implies  that (up to isogeny) $G$ splits as $T\times H  = T\times \tilde A$.  The case is essentially dealt with in \cite{Bertrand-Fourier}. But nevertheless we continue with the proof. 
We identify $G/H$ with $T$. So $y/H = d \in T$ and the image $a_{0}$ of $a$ under the projection $G \to T$ is in $LT(K)$. As $H^{\partial}(K^{diff})$ 
is the Galois group of $K^{\sharp}_{G}(y)$ over $K^{\sharp}_{G}$, we see that $y\in T(K^{\sharp}_{G})$. Now $K(d)$ is a Picard-Vessiot extension of 
$K$ with Galois group a subgroup of $\Ga_{m}(\C)$. Moreover as $G$ splits as $T\times \tilde A$, $G^{\partial} = T^{\partial} \times {\tilde 
A}^{\partial}$. Hence by Fact 3.1, $K^{\sharp}_{G} = K^{\sharp}_{\tilde A}$ and by Remark 3.2, it is a Picard-Vessiot extension of $K$ with Galois group 
a semisimple 
algebraic group in the constants. We deduce from the Galois theory that  $d$ is independent from $K^{\sharp}_{G}$ over $K$, hence $d\in T(K)$. 
So the coset of $y$ modulo $H$ has a representative $y_{1}\in G(K)$ and $y \in H + G(K)$, contradicting our assumption. This concludes Case (II)  and the proof of Theorem 2.6.

\vspace{5mm}
\noindent
 
\subsubsection*{Proof of Theorem 2.7}

So  $G = \tilde B$ for $B = B_{sc}$ a semiconstant semiabelian variety over $K$ and we may assume it has toric  part $\Ga_{m}$.  So although the toric part is still $\Ga_{m}$ both the hypothesis and conclusion of 2.7 are stronger than in 2.6. 

We have $0\to \Ga_{m} \to B \to A$ where $A = A_{0}$ is over $\C$, hence also $\tilde A$ is over $\C$ and we have $0\to \Ga_{m} \to \tilde B \to \tilde A \to 0$, and $G = \tilde B$.  As ${\tilde A}^{\partial} = {\tilde A}(\C) \subseteq \tilde A(K)$, we see that
\begin{Fact} $G^{\partial}(K^{diff})$ is a union of cosets of $\Ga_{m}(\C)$, each defined over $K$. 
\end{Fact}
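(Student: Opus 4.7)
The plan is to descend through the exact sequence
\[
0 \longrightarrow \Ga_m \longrightarrow G \stackrel{\pi}{\longrightarrow} \tilde A \longrightarrow 0
\]
by taking $\partial$-points, exploiting that under the hypothesis $B = B_{sc}$ (so $A = A_{0}$) both $\Ga_m$ and $\tilde A$ descend to $\C$, whence $\Ga_m^{\partial} = \Ga_m(\C)$ and $\tilde A^{\partial}(K^{diff}) = \tilde A(\C) \subseteq \tilde A(K)$.

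Via $\pi$ one obtains a homomorphism $G^{\partial}(K^{diff}) \to \tilde A(\C)$ with kernel $\Ga_m(\C)$; the whole content is then to see that this map is surjective and that its fibers are $K$-definable. For surjectivity, fix $b \in \tilde A(\C)$: since $K$ is algebraically closed every $\Ga_m$-torsor over $K$ is trivial, so $b$ lifts to some $y_{1} \in G(K)$. The element $\partial\ell n_{G}(y_{1}) \in LG(K)$ then projects, under the derivative of $\pi$, to $\partial\ell n_{\tilde A}(b) = 0$, and so lies in $L\Ga_m(K) = K$. Solving the first-order linear ODE $\partial c / c = -\partial\ell n_{G}(y_{1})$ in $K^{diff}$ yields $c \in \Ga_m(K^{diff})$ with $y_{1} + c \in G^{\partial}(K^{diff})$ mapping to $b$.

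Consequently $G^{\partial}(K^{diff})$ decomposes as the disjoint union, indexed by $b \in \tilde A(\C)$, of the fibers $\pi^{-1}(b) \cap G^{\partial}(K^{diff})$, and each such fiber is a coset of $\Ga_m(\C)$. For the definability claim, fix $b \in \tilde A(\C) \subseteq \tilde A(K)$: the fiber is cut out inside $G$ by the algebraic condition $\pi(y) = b$ (defined over $K$ because $b \in \tilde A(K)$) together with the Kolchin-closed condition $\partial\ell n_{G}(y) = 0$ (defined over $K$ because $G$ and its $\partial$-structure are), hence it is Kolchin-closed and $K$-definable. No real obstacle arises; the whole statement reduces to the fact that surjectivity on $\partial$-points amounts to solving one inhomogeneous first-order linear ODE over $K$ in $K^{diff}$.
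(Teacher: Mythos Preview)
Your argument is correct and follows the same line as the paper. The paper deduces Fact 3.4 in one sentence from the earlier short exact sequence $0 \to T^{\partial} \to G^{\partial} \to \tilde A^{\partial} \to 0$ together with the observation (stated just before the Fact) that, since $A = A_{0}$, one has $\tilde A^{\partial} = \tilde A(\C) \subseteq \tilde A(K)$; your decomposition via $\pi$ and the check that each fiber is cut out over $K$ by $\pi(y)=b$ and $\partial\ell n_{G}(y)=0$ is exactly this reasoning made explicit.

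The one place you add something is surjectivity of $G^{\partial}(K^{diff}) \to \tilde A(\C)$: the paper simply asserts exactness of the sequence of $\partial$-points (a general feature of short exact sequences of algebraic $\partial$-groups over a differentially closed field), whereas you lift $b$ to $y_{1} \in G(K)$ and then solve the first-order equation $\partial\ell n_{\Ga_m}(c) = -\partial\ell n_{G}(y_{1})$ by hand. This is a nice elementary alternative, and it works precisely because the kernel is $\Ga_m$ and the target $\tilde A$ is constant; in the general situation of Remark~3.3 one really needs the abstract argument.
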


We are given a logarithmic differential equation $\partial\ell n_{G}(-) = a\in LG(K)$ and solution $y\in G(K^{diff})$. We let $H$ be a minimal connected
$\partial$-subgroup of $G$, defined over $K$, such that $a\in LH + \partial\ell n_{G}(G(K))$, equivalently $y\in H + G(K) + G^{\partial}(K^{diff})$. 
We want to prove that $H^{\partial}(K^{diff})$ is  
 the Galois group of $K^{\sharp}_{G}(y)$ over $K^{\sharp}_{G}$. 
\newline
By Theorem 2.6 we may assume that $H \neq G$. Note that after translating $y$ by an element of $G(K)$ plus an element of $G^{\partial}(K^{diff})$ we can assume that $y\in H$.  
If $H$ is trivial then everything is clear.

\vspace{2mm}
\noindent
We go through the cases.
\newline
{\em Case (I)}. $H = \Ga_{m}$.  
\newline
Then by Fact 2.1,  $K(y)$  is a Picard-Vessiot extension of $K$, with Galois group $\Ga_{m}(\C)$, and all that remains to be proved is that $y$ is algebraically independent from $K^{\sharp}$ over $K$.  Let $z_{1},..,z_{n}\in G^{\partial}(K^{diff})$, and we want 
to show that $y$ is independent from $z_{1},..,z_{n}$ over $K$ (in the sense of $DCF_{0}$). By Fact 3.4, $K(z_{1},..,z_{n})$ is a Picard-Vessiot extension of $K$ and we can assume the Galois group is $\Ga_{m}^{n}(\C)$.  Suppose towards a contradiction that  
$tr.deg(K(y,z_{1},..,z_{n}/K) < n+1$ so has to equal $n$.   Hence the differential Galois group of $K(y,z_{1},..,z_{n})/K)$ is of the form $L(\C)$ where $L$ is the algebraic subgroup of $\Ga_{m}^{n+1}$ defined by $kx + k_{1}x_{1} + ... + k_{n}x_{n} = 0$ for $k, k_{i}$ integers, $k\neq 0$,  not all $k_{i}$ zero. It easily follows that
$ky + k_{1}z_{1} + .. k_{n}z_{n}\in G(K)$. So $ky$ is of the form $z+g$ for $z\in G^{\partial}(K^{diff})$ and $g\in G(K)$. Let $z'\in G^{\partial}$ and $g'\in G(K)$ be such that 
$kz' = z$ and $kg' = g$. Then $k(y-  (z'+g')) = 0$, so $y - (z'+g)$ is a torsion point of $G$ hence also in $G^{\partial}$. We conclude that $y\in G^{\partial}(K^{diff}) + G(K)$, contradicting our assumptions on $y$. This concludes the proof in Case (I).

\vspace{2mm}
\noindent
{\em Case (II).} $H$ projects onto $\tilde A$.
\newline
So our assumption that $G \neq H$ implies that up to isogeny $G$ is $T\times \tilde A$ so defined over $\C$, and everything follows from Fact 2.1.

\vspace{2mm}
\noindent
{\em Case (III).}  Otherwise. 
\newline
This is more or less a combination of the previous cases. 
\newline
To begin, suppose $H$ is disjoint from $T$ (up to finite).  So $H \leq \tilde A$ is a constant group, and by Fact 2.1, $H^{\partial}(K^{diff}) = H(\C)$ is the Galois group of $K(y)$ over $K$. By Fact 3.4 the Galois theory tells us that $y$ is independent from $K^{\sharp}_{G}$ over $K$, so $H(\C)$ is the Galois group of $K^{\sharp}(y)$ over $K^{\sharp}$ as required.
\newline
So we may assume that $T\leq H$. Let $H_{1} \leq H$ be the differential Galois group of $K^{\sharp}_{G}(y)$ over $K^{\sharp}_{G}$, and we suppose for a contradiction that $H_{1} \neq H$. As in the proof of 2.5, $H_{1}$ is defined over $K$. 
By the remark after Fact 2.2, we can assume  that  $H_1$ is connected.

\vspace{2mm}
\noindent
{\em Case (III)(a).} $H_{1}$ is a complement of $T$ in $H$ (in the usual sense that $H_{1}\times T \to H$ is an isogeny). 
\newline
So $y/H_{1} \in T(K^{\sharp}_{G})$. Let $y_{1} = y/H_{1}$.  If $y_{1}\notin T(K)$, $K(y_{1})$ is a Picard-Vessiot extension of $K$ with Galois group $\Ga_{m}(\C)$. The proof in Case (I) above shows that $y_{1}\in G^{\partial}(K^{diff}) + G(K)$ whereby $y\in H_{1} + G^{\partial}(K^{diff}) + G(K)$, contradicting the minimality assumptions on $H$. 

\vspace{2mm}
\noindent
{\em Case (III)(b).}  $H_{1} + T$ is a proper subgroup of $H$. 
\newline
Note that as we are assuming $H_{1}\neq H$, then the negation of Case (III)(a) forces Case (III)(b) to hold.  Let $H_{2} = H_{1} + T$, so $H/H_{2}$
 is a constant group $H_{3}$ say which is a vectorial extension of an abelian variety. Then  $y_{2} = y/H_{2} \in H_{3}(K^{\sharp}_{G})$, and $K(y_{2})$ is a Picard-Vessiot extension of $K$ with Galois group a subgroup of $H_{3}(\C)$.  Fact 3.4 and the Galois theory implies that 
 $y_{2}\in H_{3}(K)$.  Hence  $y \in H_{2} + G(K)$, contradicting the minimality of $H$ again.
\newline 
This completes the proof of Theorem 2.7. 

\subsection{Discussion on non generic cases}
We complete this section with a discussion of some complications arising when one would like to drop either the genericity assumption in Theorem 2.6, or the restriction on the toric part in both Theorems 2.6 and 2.7. 

 \vspace{2mm}
Let us first give an example which will have to be considered  if we drop the genericity assumption in 2.6, and give some positive information as well as identifying some technical  complications. Let $A$ be a simple abelian variety over $K$ which has $\C$-trace $0$ and such that $U_{A}\neq 0$. (Note that such an example appears below in Section 5.2 connected with Manin map issues.) Let $B$ be a nonsplit extension of $A$ by $\Ga_m$, and let $G =\tilde B$.  We have $\pi:G\to {\tilde A}$ with kernel $\Ga_{m}$, and let $H$ be $\pi^{-1}(U_{A})$, a $\partial$-subgroup of $G$.  Let $a\in LH(K)$ and $y\in H(K^{diff})$ with $d\ell n_{H}(y) = a$. We have to compute $tr.deg(K^{\sharp}_{G}(y)/K^{\sharp}_{G})$. Conjecture 2.3 predicts that it is the dimension of the smallest algebraic $\partial$-subgroup $H_{1}$ of $H$ such that $y\in H_1 
+ G(K) + G^{\partial}(K^{diff})$. 

\begin{Lemma} With the above notation: Suppose that $y\notin H_{1} + G(K) + G^{\partial}(K^{diff})$ for any proper algebraic $\partial$-subgroup $H_1$ of $H$ over $K$.  Then $tr.deg(K^{\sharp}_{G}(y)/K^{\sharp}_{G}) = dim(H)$ (and $H$ is the Galois group). 
\end{Lemma}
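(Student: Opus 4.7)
The plan is to transpose the arguments of the proofs of Theorems 2.6 and 2.7 to $H$, viewing it as a quasi-semiabelian $\partial$-group via $0 \to T \to H \to U_A \to 0$ with $T = \Ga_m$, the unipotent $U_A$ playing the role of the abelian quotient. Let $H_1^\partial(K^{diff})$ be the connected differential Galois group of $K^\sharp_G(y)/K^\sharp_G$, with $H_1$ a connected $\partial$-subgroup of $H$ a priori defined only over $K^\sharp_G$. As in Claim 1 of the proof of 2.6, $Aut(K^{diff}/K)$-invariance of $H_1^\partial(K^{diff})$ together with elimination of imaginaries in $DCF_0$ forces $H_1$ to be defined already over $K$. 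Assume for contradiction $H_1 \subsetneq H$, and split into two cases according to whether $\pi(H_1) \subseteq U_A$ is proper.

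Case (i), $\pi(H_1) = U_A$: then $H_1 + T = H$ with $H_1 \cap T$ finite. For a suitable integer $N$, the class $t := Ny \bmod H_1$ lies in $T(K^\sharp_G)$ and satisfies $\partial\ell n_T(t) \in LT(K)$, so $K(t)/K$ is Kummer. Imitating Case (I) of the proof of 2.7, but substituting Remark 3.3 (describing $K^\sharp_G$ as a union of $T(\C)$-torsors over $K^\sharp_{U_A}$) together with the semisimplicity of $Gal(K^\sharp_{U_A}/K)$ from Remark 3.2 in place of Fact 3.4, one obtains $t \in T(K) + T(\C)$. Lifting and clearing torsion, $y \in H_1 + G(K) + G^\partial(K^{diff})$, contradicting the hypothesis.

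Case (ii), $\pi(H_1) \subsetneq U_A$: then $H_2 := H_1 + T$ is still a proper $\partial$-subgroup of $H$, and after further quotienting we may assume $\bar H := H/H_2$ is a simple $\partial$-module quotient of $U_A$ over $K$. The image $\bar y := y/H_2 \in \bar H(K^\sharp_G)$ satisfies $\partial\ell n_{\bar H}(\bar y) \in L\bar H(K)$, so $K\la \bar y\ra/K$ is Picard-Vessiot with Galois group a $\C$-subspace of $\bar H^\partial$. If trivial, then $\bar y \in \bar H(K)$ and $y \in H_2 + G(K)$, contradicting minimality. If nontrivial, this Galois group would appear as a quotient of $Gal(K^\sharp_G/K)$; but the latter is an extension of the semisimple $Gal(K^\sharp_{U_A}/K)$ (Remark 3.2) by a subgroup of some $T(\C)^n$ (Remark 3.3), hence reductive, and admits no nontrivial unipotent quotient. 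Thus $\bar y$ must be independent from $K^\sharp_G$ over $K$, contradicting $\bar y \in K^\sharp_G$.

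The main obstacle lies in Case (ii): carefully converting the structural information on $K^\sharp_G/K$ from Remarks 3.2--3.3 into the required incompatibility with a nontrivial unipotent $\partial$-module Galois group. Unlike Case (I) of the proof of 2.6, where the abelian simplicity of $\overline{A_2}$ yielded the clean equality $K^\sharp_{\overline{A_2}} = K$ via the strong minimality results of \cite{HS}, \cite{BBP-ML}, here $\bar H$ is a subquotient of $U_A$ itself and $K^\sharp_{\bar H}$ need not equal $K$. One must therefore exploit the reductive/unipotent dichotomy directly, in the spirit of Remarque 8.2 of \cite{Bertrand-Fourier}, tracking the layers of the Picard-Vessiot correspondence with some care.
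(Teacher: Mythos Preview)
Your case split is natural, and Case (ii) is essentially fine (the two-step independence argument --- first from $K^{\sharp}_{U_A}$ over $K$ via semisimple-versus-unipotent, then from $K^{\sharp}_{G}$ over $K^{\sharp}_{U_A}$ via torus-versus-unipotent --- goes through). But your Case~(i) has a real gap, and you have misidentified where the difficulty lies.

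In Case~(i) you assert that ``imitating Case~(I) of 2.7, but substituting Remark~3.3 \ldots\ one obtains $t\in T(K)+T(\C)$''. This does not follow. In Theorem~2.7 the crucial input was Fact~3.4: the $T(\C)$-cosets comprising $G^{\partial}(K^{diff})$ are defined over $K$, so the multiplicative relation among $t$ and the $z_i$ descends to a relation over $K$. Here the cosets are only defined over $K^{\sharp}_{U_A}$ (Remark~3.3), so the same manipulation yields only $kt = z + g$ with $z\in G^{\partial}(K^{diff})$ and $g\in G(K^{\sharp}_{U_A})$, not $g\in G(K)$. This is \emph{exactly} the obstruction the paper isolates in the paragraph following the lemma (the missing ``domination'' of $K^{\sharp}_{G}$ by $K^{\sharp}_{U_A}$ over $K$), and the paper explicitly says it has not been overcome.

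The paper's proof avoids this entirely by observing that your Case~(i) is \emph{vacuous}: a proper connected $\partial$-subgroup $H_1\subsetneq H$ with $\pi(H_1)=U_A$ would split $H$ as $\Ga_m\times U_A$ (as a $\partial$-group), and this contradicts the assumption that $B$ is a nonsplit extension of $A$ by $\Ga_m$ (via (v) of Section~2 of \cite{Bertrand-Fourier}). The paper then argues positively: project $y$ to $z:=\pi(y)\in U_A$, verify the analogous genericity for $z$ inside $\tilde A$, apply Proposition~2.5 (Case~I) to get Galois group $U_A^{\partial}$ over $K^{\sharp}_{\tilde A}$, push this up to $K^{\sharp}_{G}$ via the torus/unipotent incompatibility, and finally use non-splitting of $H$ to force the Galois group of $y$ over $K^{\sharp}_{G}$ to be all of $H$. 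So the non-splitting of $B$ is not incidental context but the decisive ingredient --- and it is precisely what is missing from your argument.
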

\begin{proof} Let $z$ and $\alpha$ 
the images of $y$, $a$ respectively under the maps $H\to U_{A}$ and $LH\to L(U_{A}) = U_{A}$ induced by $\pi:G\to \tilde A$.  So $\partial\ell n_{\tilde A}(z) = \alpha$ with $\alpha \in L \tilde A(K)$. 

\vspace{2mm}
\noindent
{\em Claim.}  $z\notin   U + {\tilde A}(K) + {\tilde A}^{\partial}(K^{diff})$ for any proper algebraic $\partial$-subgroup $U$ of $U_{A}$, over $K$. 
\newline
{\em Proof of claim.}  Suppose otherwise. Then lifting suitable $z_{2}\in {\tilde A}(K)$, $z_{3}\in {\tilde A}(K^{diff})$, to $y_{2}\in G(K)$, $y_{3}\in G^{\partial}(K^{diff})$ respectively, we see that $y - (y_{2} + y_{3}) \in \pi^{-1}(U)$, a proper algebraic $\partial$-subgroup of $H$, a contradiction. 

\vspace{2mm}
\noindent

As in  Case (I) in the proof of   2.5, we conclude that $tr.deg(K^{\sharp}_{\tilde A}(z)/K^{\sharp}_{\tilde A}) = dim(U_{A})$, and $U_{A}$ is the Galois group. Now $K^{\sharp}_{G}$ is a union of Picard-Vessiot extensions of $K^{\sharp}_{\tilde A} = K^{\sharp}_{U_{A}}$, each with Galois group $\Ga_{m}$ (by 3.3) so the Galois theory tells us that $z$ is independent from $K^{\sharp}_{G}$ over $K^{\sharp}_{\tilde A}$.  Hence the differential Galois group of $K^{\sharp}_{G}(z)$ over $K^{\sharp}_{G}$ is $U_{A}^{\partial}$. 
But then the Galois group of  $K^{\sharp}_{G}(y)$ over $K^{\sharp}_{G}$ will be the group of $\partial$-points of a $\partial$-subgroup of $H$ which projects onto $U_{A}$.  The only possibility is $H$ itself, because otherwise $H$ splits as $\Ga_{m}\times U_{A}$ as a $\partial$-group, which contradicts (v) of Section 2 of \cite{Bertrand-Fourier}. This completes the proof.  
\end{proof}

Essentially the same argument applies if we replace $H$ by the preimage under $\pi$ of some nontrivial $\partial$-subgroup of $U_{A}$.  So this shows that the scenario described right before Lemma 3.5, reduces to the case where $a\in LT$ where $T$ is the toric part $\Ga_{m}$ (of both $G$ and $H$), and we may assume $y\in T(K^{diff})$. We would like to show (in 
analogy with 3.5) that if $y\notin G(K) + G^{\partial}(K^{diff})$ then $tr.deg(K_{G}^{\sharp}(y)/K_{G}^{\sharp}) = 1$.  Of course already $K(y)$ is a Picard-Vessiot extension of $K$ with Galois group $T(\C)$, and we have to prove that $y$ is independent from $K^{\sharp}_{G}$ over $K$.  One deduces from the Galois theory that $y$ is independent from 
$K^{\sharp}_{U_{A}}$ over $K$. It remains to show that for any $z_{1},..z_{n}\in G^{\partial}(K^{diff})$, $y$ is independent from $z_{1},..,z_{n}$ over $K_{U_{A}}^{\sharp}$. If not, the discussion in Case (I) of the proof of Theorem 2.7, gives that $y = z+g$ for some $z\in G^{\partial}(K^{diff})$ and $g\in G(K_{U_{A}}^{\sharp})$, which does not suffice to yield a contradiction.  It would be enough in this section to prove a ``domination" statement, in the sense of model theory namely that $K_{U_{A}}^{\sharp}$ dominates $K_{G}^{\sharp}$ over $K$. Recall that this means that {\em anything}  independent from $K_{U_{A}}^{\sharp}$ over $K$ is independent from $K_{G}^{\sharp}$ over $K$.  We did not succeed in proving this yet, although it should be the case.

\vspace{2mm}

Similar and other issues arise when  we want to drop the restriction on the toric part.  For example in Case (ii) in the proof of Theorem 2.6, we can no longer deduce the splitting of $G$ as $T\times \tilde A$. And in the proof of Theorem 2.7,  both the analogues of Case (I) $H = T$ and Case (II) $H$ projects on to $\tilde A$,  present technical difficulties. 

\section{Lindemann-Weierstrass}
 
We here prove Corollaries  2.8, 2.9, and 2.10. 
\subsection{General results}

\subsubsection*{Proof of Corollary 2.8}
We first prove (i). Write $G$ for $\tilde B$.  Let $\tilde x \in LG(K)$ be a lift of $x$ and $\tilde y\in G(\cal U)$ a solution of $\partial\ell n_{G}(-) = \tilde x$.  We refer to  Section 1.2  
and Lemma 4.2 of \cite{LW} for a discussion of 
the equivalence of the hypotheses 
`` $x\notin LH(K) + LB_{0}(\C)$ for any proper semiabelian subvariety $H$ of $B$ "  and 
`` (*) $\tilde x \notin LH(K) + (LG)^{\partial}(K)$ for any proper algebraic subgroup $H$ of $G$ over $K$ ".
 
 \vspace{2mm}
Let $a = \partial_{LG}(\tilde x)$. So $\tilde y$ is a solution of the logarithmic differential equation (over $K$) $\partial\ell n_{G}(-) = a$. We want to show  that $tr.deg(K^{\sharp}_{G}(\tilde y)/K^{\sharp}_{G}) = \dim(G)$. If not, we may assume that $\tilde y\in G(K^{diff})$, and so by Theorem 2.6, $\tilde y\in H+G(K)$ for some proper connected algebraic  $\partial$-subgroup $H$ of $G$ defined over $K$. 
  Extend $H$ to a maximal proper connected $\partial$-subgroup $H_{1}$ of $G$, defined over $K$. Then $G/H_{1}$ is either (i) $\Ga_{m}$ or (ii) a simple abelian variety $A_{0}$ over $\C$, or (iii)  the quotient of $\tilde A_{1}$ by a maximal unipotent $\partial$-subgroup, where $A_{1}$ is a simple abelian variety over $K$ with $\C$-trace $0$. 
Let $x'$, $y'$ be the images of $\tilde x$, $\tilde y$ under the map $G\to G/H_{1}$  and induced  $LG \to L(G/H_{1})$. So both $x',y'$ are $K$-rational. Moreover the hypothesis  (*)  is preserved in $G/H_{1}$  (by our assumptions on $G$ and Lemma 4.2(ii) of \cite{LW}). 
As $\partial\ell n_{G/H_{1}}(y') = \partial_{L(G/H_{1})}(x')$, we have a contradiction in each of the cases (i), (ii), (iii) listed above, by virtue of the truth of Ax-Lindemann in the constant case, as well as  Manin-Chai (Proposition 4.4 in \cite{LW}). 

\vspace{2mm}
\noindent
(ii) Immediate as in  \cite{LW}:  Choosing $\tilde y = exp_{G}(\tilde x)$, then $exp_{B}(y)$ is the projection of $\tilde y$ on $B$.

\subsubsection*{Proof of Corollary 2.9}

This is like the proof of Corollary 2.8.  So $x\in LA(K)$. Let $\tilde x\in L\tilde A(K)$ lift $x$ and let $\tilde y\in \tilde A(K^{diff})$ be such that $\partial\ell n_{\tilde A}(\tilde y) = \partial_{L\tilde A}(\tilde x) = a$, say.   Let $B$ be a minimal abelian subvariety of $A$ such that $x\in LB(K) + L
A_{0}(\C)$, and we want to prove that 
$tr.deg(K^{\sharp}_{\tilde A}(\tilde y)/K^{\sharp}_{\tilde A}) = dim(\tilde B)$.
\newline
{\em Claim.} We may assume that $x\in LB(K)$, $\tilde x\in L\tilde B(K)$ and $\tilde y\in \tilde B(K^{diff})$. 
\newline
{\em Proof of claim.} Let $x= x_{1} + c$ for $x_{1}\in LB$ and $c\in LA_{0}(\C)$.  Let ${\tilde x_{1}}\in L\tilde B(K)$ be a lift of $x_{1}$ 
and $\tilde c\in L{\tilde A_{0}}(\C)$ be a lift of $c$. Finally let $\tilde y_{1}\in \tilde B(K^{diff})$ be such that $\partial\ell n_{\tilde A}(\tilde y_{1}) = \partial_{L\tilde A}(\tilde x_{1}) = 
a_{1}$, say. 
As ${\tilde x_{1}} + \tilde c$ projects on to  $x$, it differs from $\tilde x$ by an element $z\in LW(K)$.  Now $\partial_{L\tilde A}(z) = \partial\ell n_{\tilde A}(z)$. 
So $a = \partial_{L\tilde A}(\tilde x) = \partial_{L\tilde A}(\tilde x_{1} + \tilde c + z)  = \partial_{L\tilde A}(\tilde x_{1}) + \partial\ell n_{\tilde A}(z) = a_{1}  + \partial\ell n_{\tilde A}(z)$. Hence $\partial\ell n(\tilde y_{1} + z) = a$, and so $\tilde y_{1} + z$ differs from $\tilde y$, by an element of ${\tilde A}^{\partial}$. Hence $tr.deg(K^{\sharp}_{\tilde A}(\tilde y_{1})/K^{\sharp}_{\tilde A}) = tr.deg(K^{\sharp}_{\tilde A}(\tilde y_{1})/K^{\sharp}_{\tilde A})$. Moreover the same hypothesis remains true of $x_{1}$ (namely $B$ is minimal such that $x_{1}\in LB + LA_{0}(\C)$). So we can replace $x,\tilde x,\tilde y$ by $x_{1},\tilde x_{1}, \tilde y_{1}$. 

\vspace{2mm}
 
As recalled in the proof of Corollary 2.8   
(see Corollary H.5 of \cite{LW}), the condition that $x\notin B_{1}(K) + LA_{0}(\C)$ for any proper abelian subvariety $B_{1}$ of $B$ is equivalent to
(*) $\tilde x\notin LH(K) + (L\tilde A)^\partial(K)$ 
for any proper algebraic subgroup $H$ of $\tilde B$ defined over $K$.
Now  we can use the Galois-theoretic result  Proposition 2.5, namely the truth of Corollary 2.3 for $\tilde A$, as above.  That is, if by way of contradiction $tr.deg(K^{\sharp}_{\tilde A}(\tilde y)/K^{\sharp}_{\tilde A}) < dim(\tilde B)$, then $\tilde y \in H + \tilde A(K) + (\tilde A)^{\partial}(K^{diff}$ for some proper  
connected algebraic $\partial$-subgroup of $\tilde B$, defined over $K$,  and moreover $H^\partial$ is the differential Galois group of $K^{\sharp}_{\tilde A}(\tilde y)/K^{\sharp}_{\tilde A}$. As at the end of the proof of Corollary 2.8 above we get a contradiction by choosing $H_{1}$ to be a maximal proper connected algebraic $\partial$-subgroup of $\tilde A$, containing $H$ and defined over $K$. This concludes the proof   of 2.9. 

\subsection{Semiabelian surfaces}
We first recall the counterexample from Section 5.3 of \cite{LW}.  This example shows that in  Corollary 2.8, we cannot drop the assumption that the semiconstant part is constant.  We go through it again briefly. Let $B$ over $K$ be a nonconstant extension of a constant elliptic curve $E = E_0$  by $\Ga_{m}$, and let $ G = \tilde B$. Let $ \tilde x \in LG(K)$ map onto a point $\check x$ in
$ L\tilde E(\C)$ which itself maps onto a nonzero point $\bar x$  of $LE(\C)$. As pointed out in \cite{LW}  $(LG)^{\partial}(K) = (L\Ga_{m})(\C)$, whereby $\tilde x$ satisfies the hypothesis (*) from 2.8:  $\tilde x\notin LH(K) + (LG)^{\partial}(K)$  for any proper algebraic subgroup $H$ of $G$. 
Let $a= \partial_{LG}(\tilde x)\in LG(K)$, and $\tilde y\in G(K^{diff})$ such that $\partial\ell n _{G}( \tilde y) = a$.  Then as the image of $a$ in $L\tilde E$ is $0$, $ \tilde y$ projects onto a 
point of $\tilde E(\C)$, and hence $\tilde y$ is in a coset of $\Ga_{m}$ defined over $K$ whereby $tr.deg(K( \tilde y)/K) \leq 1$, so a fortiori the same with $K^{\sharp}_{G}$ in place of $K$.
A consequence of Corollary 2.10, in fact the main part of its proof,  is that with  the above choice of $\tilde x$, we have $tr.deg(K^{\sharp}_{G}(\tilde y)/K^{\sharp}_{G}) = 1$ (as announced in \cite{BMPZ}, Footnote 5).

\subsubsection*{Proof of Corollary 2.10}

Let us fix notations: $B$ is a semiabelian variety over $K$ with toric part $\Ga_{m}$ and abelian quotient  a non-necessarily constant elliptic curve $E/K$, with constant part $E_0$; $G$ denotes the universal vectorial extension $\tilde B$ of $B$ and $\tilde E$ the universal vectorial extension of $E$. For $x\in LB(K)$, $\tilde x$ denotes a lift of $x$ to a point of $LG(K)$,  $\bar x$ denotes the projection of $x$ to $LE(K)$, and $\check x$  denotes the projection of $\tilde x$ to $L\tilde E(K)$. 

Recall the hypothesis (*) in 2.10: $x\notin LH + LB_{0}(\C)$ for any proper algebraic subgroup $H$ of $B$.  As pointed out after the statement of Corollary 2.10, under this hypothesis, the condition $\bar x \in LE_{0}(\C)$ can occur only if $B$ is semiconstant and not constant.  Indeed,  if $B$ were not semiconstant then $E_{0} = 0$ so $x\in L\Ga_{m}$ contradicting the hypothesis on $x$. And if $B$ is constant then $B = B_{0}$  and $\bar x$ has a lift in $LB_{0}(\C)$ 
 whereby $x\in L\Ga_{m} + LB_{0}(\C)$, contradicting the hypothesis.

Now if the semiconstant part of $B$ is constant, then we can simply quote Corollary 2.8, bearing in mind the paragraph above which rules out the possibility that $\bar x \in LE_{0}(\C)$. So we will assume that $B_{sc} \neq  B_{0}$,  namely $E= E_{0}$ and $B_{0} = \Ga_{m}$. 

\vspace{2mm}
\noindent
{\em Case (I).}  $\bar x\in LE(\C)$ ($= LE_{0}(\C)$ as $E = E_{0}$). 
\newline
This is where the bulk of the work goes.  We first check that we are essentially in the situation of the ``counterexample" mentioned above. The argument is a bit like in the proof of the claim in Corollary 2.9. Note that $\bar x \neq 0$ by hypothesis (*). 
Let $\check x'$ be a lift of $\bar x$ to a point in $L\tilde E(\C)$  (noting that $\tilde E$ is also over $\C$).  Then $\check x' = \check x - \beta$ for some $\beta\in L\Ga_{a}(K)$. 
Let $\tilde x' = \tilde x - \beta$. Let $a' = \partial_{LG}(\tilde x')$. Then (as $\partial_{LG}(\beta) = \partial\ell n_{G}(\beta)$, under the usual identifications) $a' = a + \partial\ell n_{G}(\beta)$, and if $\tilde y'\in G$ is such that $\partial\ell n_{G}(\tilde y') = a'$ then $\partial\ell n_{G}(\tilde y' -\beta) = a$.  As $\beta\in G(K)$, $tr.deg(K^{\sharp}_{G}(\tilde y')/K^{\sharp}_{G}) = tr.deg(K^{\sharp}_{G}(\tilde y)/K^{\sharp}_{G})$.
\newline
The end result is that we can assume that $\tilde x\in LG(K)$ maps onto $\check x'\in L\tilde E(\C)$ which in turn maps on to our nonzero $\bar x \in LE(\C)$, precisely the situation in the example above from Section 5.1 of \cite{LW}. So to deal with Case (I), we have to prove:

\vspace{2mm}
\noindent
{\em Claim 1.} $tr.deg(K^{\sharp}_{G}(\tilde y)/K^{\sharp}_{G}) = 1$. 
\newline
{\em Proof of claim 1.} 
Remember that $a$ denotes $\partial_{LG}(\tilde x)$. 
Now by Theorem 2.7, it suffices to prove that $a\notin \partial\ell n_{G}(G(K))$. 

We assume for a contradiction that there is $\tilde s\in G(K)$ such that
$$ a = \partial _{LG}(\tilde x) = \partial\ell n_{G}(\tilde s).  \qquad\quad (\dagger)$$
This is the semi-abelian analogue of a Manin kernel statement, which can probably be studied directly, but we will deduce the contradiction from \cite{BMPZ}. Let $\tilde x_{1} = log_{G}(\tilde s)$ be a solution given by complex analysis to the linear inhomogeneous equation  $\partial_{LG}(-) = \partial\ell n_{G}(\tilde s)$.  Namely, with notations as in the appendix to \cite{LW} (generalizing those given after Corollary 2.8 above),  a local analytic section of $L{\bf G}^{an}/S^{an}$ such that $exp_{\bf G}(\tilde x_{1}) = \tilde s$.  Let $\xi\in (LG)^{\partial}$ be $\tilde x- \tilde x_{1}$. Then $\xi$ lives in a differential field (of meromorphic functions on some disc in $S$) which extends $K$ and has the same constants as $K$, namely $\C$. As $\xi$ is the solution of a linear homogeneous differential equation over $K$, $\xi$ lives in $(LG)^{\partial}(K^{diff})$. Hence as $\tilde x\in LG(K)$ this implies that $\tilde x_{1}\in LG(K^{\sharp}_{LG})$  where $K^{\sharp}_{LG}$ is the differential field generated over $K$ by $(LG)^{\partial}(K^{diff})$. 

Now from Section 5.1 of \cite{BMPZ}, $K^{\sharp}_{LG}$ coincides with the ``field of periods" $F_{q}$ attached to the point 
$q \in {\hat E}(K)$ which parametrizes the extension $B$ of $E$ by $\Ga_{m}$.  Hence from ($\dagger$) we conclude
that $ F_{q}(log_{G}(\tilde s)) = F_{q}.$

 Let $s\in B(K)$ be the projection of $\tilde s$, and let $p\in E(K)$ be the projection of $s$. By \cite{BMPZ}, discussion in Section 5.1,  we have that 
$F_{pq}(log_{B}(s)) = F_{q}(log_{G}(\tilde s))$.  Therefore, $ F_{q} = F_{pq} = F_{pq}(log_{B}(s))$. 

Now as $\tilde x\in LG(K)$ maps onto the constant point $\check x \in L\tilde E(\C)$, so also $\tilde s$ maps onto a constant point  $\check p \in \tilde E(\C)$ and hence $p\in E(\C)$. So we are in Case (SC2) of the 
proof of the 
Main Lemma of \cite{BMPZ}, \S 6,  namely $p$ constant while $q$ nonconstant.  The conclusion of (SC2) is that $log_{B}(s)$ is transcendental over $F_{pq}$ if $p$ is nontorsion. So the previous equality forces $p\in E(\C)$ to be torsion. 

Let $\tilde s_{tor}\in G(K)$ be a torsion point lifting $p$, hence  $\tilde s - \tilde s_{tor}$ is a $K$-point of the kernel of the surjection $G\to E$. Hence $\tilde s = \tilde s_{tor} + \delta + \beta$ where
 $\beta \in \Ga_{a}(K)$ and $\delta\in \Ga_{m}(K)$.  Taking logs,  putting again $\xi = \tilde x-\tilde x_{1}$ ,  and  using that  $log_{G}(-)$ restricted to $\Ga_{a}(K)$ is the identity, we see 
that $\tilde x = \xi + log_{G}(\tilde s_{tor}) +  log_{G}(\delta)  + \beta = \xi' + log_{\Ga_{m}}(\delta) + \beta$  where $\xi'\in (LG)^{\partial}$.
It follows that $\ell = log_{\Ga_{m}}(\delta) \in K^{\sharp}_{G} = F_{q}$. But by Lemma 1 of \cite{LW} (proof of Main Lemma  in isotrivial case, but reversing roles of $p$ and $q$), such $\ell$ is transcendental over $F_{q}$ unless $\delta$ is constant.

Hence $\delta \in\Ga_{m}(\C)$, whereby $log_{\Ga_{m}}(\delta) \in L\Ga_{m}(\C)$ so is in $(LG)^{\partial}(K^{diff})$, and we conclude that 
$\tilde x-\beta \in (LG)^{\partial}(K^{diff})$. As also $\tilde x-\beta \in LG(K)$, from Claim III in Section 5.3 of \cite{LW} 
(alternatively, using the fact that $K^\sharp_{LG} = F_q$ has transcendence degree $2$ over $K$),
 we conclude that 
$\tilde x-\beta \in L\Ga_{m}(\C)$ whereby $\tilde x\in L\Ga_{a}(K) + L\Ga_{m}(\C)$, contradicting that $x$ projects onto a nonzero element $LE$. This contradiction completes the proof of  Claim 1  and hence of Case (I) of Corollary 2.10.  

\vspace{2mm}
\noindent
{\em Case (II).} $\bar x \in LE(K)\setminus LE(\C)$ is a nonconstant point of $LE(K) = LE_{0}(K)$. 
\newline
Let $\tilde y\in G(K^{diff})$ be such that  $\partial\ell n_{G}(\tilde y) = a = \partial_{LG}(\tilde x)$.  Let $\check y$ be the projection of $\tilde y$ to $\tilde E$. Hence 
$\partial\ell n_{\tilde E}(\check y) = \partial_{L\tilde A}(\check x)$ (where remember $\check x$ is the projection of $\tilde x$ to $L\tilde E$).  Noting that $\check x$ lifts $\bar x\in LE(K)$, and using our case hypothesis, we can apply Corollary 2.9 to $E$ to conclude that  $tr.deg(K(\check y)/K) = 2$ with Galois group  ${\tilde E}^\partial(K^{diff}) = \tilde E(\C)$. (In fact as $E$ is constant this is already part of the Ax-Kolchin framework and appears in \cite{Bertrand-LMS}.) 

\vspace{2mm}
\noindent
{\em Claim 2.} $tr.deg(K^{\sharp}_{G}(\check y)/K^{\sharp}_{G}) = 2$. 
\newline
{\em Proof of Claim 2.}  Fact 3.4 applies to the current situation, showing that $K^{\sharp}_{G}$ is a directed union of Picard-Vessiot extensions of $K$ each with Galois group some 
product of $\Ga_{m}^{n}(\C)$'s.  As  there are no  
proper algebraic subgroups of $\tilde E(\C)\times \Ga_{m}^{n}(\C)$ projecting onto each factor, it follows from the Galois theory, that $\check 
y$ is independent from $K^{\sharp}_{G}$ over $K$, yielding Claim 2. 

\vspace{2mm}
Now $K^{\sharp}_{G}(\tilde y)/K^{\sharp}_{G}$ is a differential Galois extension with Galois group of the form $H^{\partial}(K^{diff})$ where $H$ is  connected algebraic $\partial$-subgroup of $G$. So $H^{\partial}$ projects  onto the (differential) Galois group of $K^{\sharp}_{G}(\check y)$ over $K^{\sharp}_{G}$, which by Claim 2 is ${\tilde E}^{\partial}(K^{diff})$. In particular $H$ projects onto $\tilde E$. If $H$ is a proper subgroup of $G$, then projecting $H$, $\tilde E$ to $B$, $E$, respectively, shows that $B$ splits (up to isogeny), so $B = B_{0}$ is constant, contradicting the current assumptions.
Hence the (differential) Galois group of $K^{\sharp}_{G}(\tilde y)$ over $K^{\sharp}_{G}$ is $G^{\partial}(K^{diff})$, whereby 
$tr.deg(K^{\sharp}_{G}(\tilde y/K^{\sharp}_{G})$ is $3$. This concludes the proof of Corollary 2.10.

\subsection{An Ax-Schanuel conjecture}

As a conclusion to the first two themes of the paper, we may say that both at the Galois theoretic level and for Lindemann-Weierstrass, we have obtained rather definitive results  for families of abelian varieties, and working over a suitable base $K^{\sharp}$. There  remain open questions for families of semiabelian varieties, such as Conjecture 2.3, as well as dropping the restriction on the toric part in 2.6, 2.7, 2.8, and 2.10.  It also remains to formulate a qualitative description of $tr.deg(K^{\sharp}(exp_B(x)/K^{\sharp})$ where $B$ is a semiabelian variety over $K$ of dimension $> 2$, and $x\in LB(K)$, under the nondegeneracy hypothesis that $x\in LH + LB_{0}(\C)$ for any proper semiabelian subvariety $H$ of $B$. 

\vspace{2mm}

Before turning to our third theme, it seems fitting to  propose a more general {\em Ax-Schanuel}  conjecture for families of abelian varieties, as follows.
\begin{Conjecture} Let $A$ be an abelian variety over $K = \C(S)$ for a curve $S/\C$, and let $F$ be the field of meromorphic functions on some disc in $S$. Let $K^{\sharp}$ now denote $K^{\sharp}_{L\tilde A}$ (which contains $K^{\sharp}_{\tilde A}$).  Let $\tilde x$, $\tilde y$ be $F$-rational points of $L\tilde A, \tilde A$ respectively such that $exp_{\tilde A}(\tilde x) = \tilde y$, and let $y$
 be the projection of $\tilde y$ on $A$. Assume that $y\notin H + A_{0}(\C)$ for any proper 
 algebraic subgroup $H$ of $A$. Then $tr.deg (K^{\sharp}(\tilde x, \tilde y)/K^{\sharp}) \geq dim(\tilde A)$. 
\end{Conjecture}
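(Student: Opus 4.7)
The plan is to establish the statement as a family version of Ax's functional Schanuel theorem for abelian varieties, combining the Lindemann-Weierstrass conclusion of Corollary 2.9 with a Picard-Vessiot analysis over the base $K^{\sharp} = K^{\sharp}_{L\tilde A}$. Set $d = tr.deg(K^{\sharp}(\tilde x, \tilde y)/K^{\sharp})$ and argue by contradiction, assuming $d < \dim \tilde A = 2 \dim A$. Let $W \subseteq L\tilde A \times_{K} \tilde A$ be the $K^{\sharp}$-Zariski closure of the point $(\tilde x, \tilde y)$, so $\dim W = d$. Because $\tilde y = exp_{\tilde A}(\tilde x)$, the identity $\partial\ell n_{\tilde A}(\tilde y) = \partial_{L\tilde A}(\tilde x)$ and all its higher $\partial$-prolongations hold on $(\tilde x, \tilde y)$, equipping $W$ with a compatible algebraic $\partial$-structure inherited from the product.

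The main step is to extract from $W$ a proper connected algebraic $\partial$-subgroup $\tilde H$ of $L\tilde A \times \tilde A$, defined over $K^{\sharp}$, whose cosets foliate a dense open part of $W$, in the spirit of Ax's original extraction of a linear subspace. The graph of $exp_{\tilde A}$ is invariant under the diagonal action $\tilde h \mapsto (\tilde h, exp_{\tilde A}(\tilde h))$ of $L\tilde A$, and since $W$ is $\partial$-closed of strictly smaller dimension than the ambient space, its stabilizer under this action must be positive-dimensional. Projecting $\tilde H$ to the $\tilde A$-factor yields a proper algebraic $\partial$-subgroup $\tilde B \subsetneq \tilde A$ such that $\tilde y$ lies in a $K^{\sharp}$-rational coset of $\tilde B$; projecting further to $A$ gives an abelian subvariety $B \subsetneq A$ with $y$ in a coset of $B$ defined over $K^{\sharp}$. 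One then applies Proposition 2.5 and Corollary 2.9 to the image of $\tilde y$ in $\tilde A/\tilde B$, together with the reductivity of the Galois group of $K^{\sharp}/K$ (Remark 3.2), to force this $K^{\sharp}$-rational coset to come from an $A_{0}(\C)$-translate of $B$, contradicting the hypothesis $y \notin H + A_{0}(\C)$. This descent step is essentially the constrained-cohomology argument already used in Case (I) of the proof of Proposition 2.5.

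The hardest part will be the Ax-style extraction of the $\partial$-subgroup $\tilde H$. In the constant case, the existence of such a subgroup (Bertolin, Kirby, Tsimerman) is established by combining $o$-minimality with the period monodromy of the Gauss-Manin connection. Adapting this to the non-constant setting over $K^{\sharp}_{L\tilde A}$ requires disentangling the flat structure of the $\partial$-module $(L\tilde A, \partial_{L\tilde A})$ from the nonlinear exponential at the level of Zariski closures, uniformly in $t \in S$, and no ready-made theorem supplies this. We regard it as the essential technical content of the conjecture beyond the Lindemann-Weierstrass statement already covered by Corollary 2.9.
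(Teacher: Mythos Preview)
The statement you are attempting to prove is \emph{Conjecture 4.1} in the paper, and the paper does \emph{not} supply a proof. It is explicitly proposed as an open Ax--Schanuel conjecture for families of abelian varieties; the surrounding text only motivates the formulation (explaining why the hypothesis is on $y$ rather than on $x$, and exhibiting a counterexample to the weaker version) and records a concrete consequence for the Legendre family. So there is no ``paper's own proof'' to compare against.

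Your proposal is, correspondingly, not a proof but a strategy, and you identify the gap yourself in the final paragraph: the ``Ax-style extraction'' of a positive-dimensional algebraic $\partial$-subgroup $\tilde H$ stabilizing the Zariski closure $W$ is unproved. This is not a cosmetic omission; it is the entire content of the conjecture. The sentence ``since $W$ is $\partial$-closed of strictly smaller dimension than the ambient space, its stabilizer under this action must be positive-dimensional'' is asserted without argument, and in fact does not follow from what precedes it. In the classical Ax argument one exploits that the relevant variety is algebraic over the base and contains a full leaf of the exponential foliation to force a nontrivial stabilizer via a tangent-space count; here you are working over the transcendental extension $K^{\sharp}_{L\tilde A}$, which already absorbs the periods, and no such mechanism is in place. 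The subsequent steps (projecting $\tilde H$ to $\tilde A$, descending the $K^{\sharp}$-coset to $K$ via Proposition 2.5 and reductivity) are plausible \emph{given} such an $\tilde H$, though even there the claim that the projection of $\tilde H$ to $\tilde A$ is proper needs justification. In short, your outline correctly locates where the difficulty lies, but does not resolve it, which is consistent with the paper's decision to leave the statement as a conjecture.
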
 

We point out that the assumption concerns $y$, and not the projection $x$ of $\tilde x$ to $LA$. Indeed, the conclusion would in general not hold true under the weaker hypothesis that   $x\notin LH + LA_{0}(\C)$ for any proper abelian subvariety $H$ of $A$. 
As a counterexample, take for $A$ a simple non constant abelian variety over $K$, and for $\tilde x$  a non-zero period of $L\tilde A$. Then, $x \neq 0$ satisfies the hypothesis above and $\tilde x$ is defined over $K^\sharp = K^{\sharp}_{L\tilde A}$, but $\tilde y = exp_{\tilde A}(\tilde x) = 0$, so  $tr.deg (K^{\sharp}(\tilde x, \tilde y)/K^{\sharp}) = 0$. 

\vspace{2mm}

Finally, here is a concrete corollary of the conjecture. Let ${\bf E} : y^2 = x(x-1)(x-t)$ be the universal Legendre elliptic curve over $S = \C \setminus \{0, 1\}$, and let $\omega_1(t), \omega_2(t)$ be a basis of the group of periods of $\bf E$ over some disk, so $K^{\sharp} = K^{\sharp}_{L\tilde E}$ is the field generated over $K = \C(t)$ by $\omega_1, \omega_2$ and their first derivatives. Let $\wp = \wp_t(z), \zeta = \zeta_t(z)$ be the standard Weierstrass functions attached to $\{\omega_1(t), \omega_2(t)\}$. For $g \geq 1$, consider $2g$ algebraic functions $\alpha_1^{(i)}(t), \alpha_2^{(i)}(t) \in K^{alg}, i = 1, ..., g$, and assume that the vectors 
$  \left( { \begin{array}{ccccccc} 1   \\  0 \end{array}} \right),  \left( { \begin{array}{ccccccc} 0   \\  1 \end{array}} \right),  \left( { \begin{array}{ccccccc} \alpha_1^{(1)}  \\ \alpha_2^{(1)}  \end{array}} \right), ..., \left( { \begin{array}{ccccccc} \alpha_1^{(g)}  \\ \alpha_2^{(g)}  \end{array}} \right) $ are linearly independent over $\Z$. Then, the $2g$ functions 
$$\wp(\alpha_1^{(i)}\omega_1 + \alpha_2^{(i)}\omega_2) ~,~ \zeta(\alpha_1^{(i)}\omega_1 + \alpha_2^{(i)}\omega_2)~, i = 1, ..., g,$$
 of the variable $t$ are algebraically independent over $K^\sharp$. In the language of \cite{BMPZ}, \S 3.3, this says in particular  that  a $g$-tuple of  $\Z$-linearly independent  local analytic sections of  ${\bf E}/S$ with algebraic {\it Betti} coordinates forms a generic point of ${\bf E}^g/S$. Such a statement is not covered by our Lindemann-Weierstrass results, which concern  analytic sections with algebraic logarithms.

\section{Manin maps}

\subsection{Injectivity}
We here prove Theorem  2.13. and Proposition 2.15. Both statements will follow fairly quickly  from Fact 5.1 below, which is Theorem 4.3 of \cite{Bertrand-descent} and relies on the strongest version of ``Manin-Chai", namely formula $(2^{*})$
 from Section 4.1 of \cite{Bertrand-descent}.
We should mention that a more direct proof of Proposition 2.15 can be extracted from the proof of Proposition J.2 (Manin-Coleman) in \cite{LW}. But we will stick with the current proof below, as it provides a good introduction to the counterexample in Section 5.2.

 \vspace{2mm}
 
 We set up notations :   $K$ is $\C(t)^{alg}$ as usual,  $A$ is an an abelian variety over $K$ and $A_{0}$ is the $\C$-trace of $A$. For $y\in \tilde A(K)$, we let $\overline y$  be its image in $A(K)$. Let $b = \partial\ell n_{\tilde A}(y)$.
  We consider the differential system in unknown $x$:
 $$\nabla_{L\tilde A}(x) = b  ~,$$
 where we write $\nabla_{L\tilde A}$ for $\partial_{L\tilde A}$.  Let $K^{\sharp}_{L\tilde A}$ be the differential field generated, over $K$, by 
 $(L\tilde A)^{\partial}(K^{diff})$. So for $x$ a solution   in $L\tilde A(K^{diff})$, the differential Galois group of $K^{\sharp}_{L\tilde A}(x)$ over $K^{\sharp}_{L\tilde A}$ pertains to Picard-Vessiot theory, and is well-defined as a $\C$-subpace of  the $\C$-vector space $(L\tilde A)^{\partial}(K^{diff})$.

\begin{Fact} [$A = $ any abelian variety over $K = \C(t)^{alg}$] Let $y\in \tilde A(K)$. Let $B$ be the smallest abelian subvariety of $A$ such that a multiple of $\overline y$  by a nonzero integer is in  $B + A_{0}(\C)$.  Let $x$ be a solution of $\nabla_{L\tilde A}(-) = b$ in $L\tilde A(K^{diff})$. 
Then the differential Galois group of $K^{\sharp}_{L\tilde A}(x)$ over $K^{\sharp}_{L\tilde A}$ is $(L\tilde B)^{\partial}(K^{diff})$. In particular  $tr.deg(K^{\sharp}_{L\tilde A}(x)/K^{\sharp}_{L\tilde A}) = dim\tilde B  = 2dimB$. 
\end{Fact}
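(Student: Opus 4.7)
The plan is to identify the Galois group via Fact 2.2(ii) applied to the $\partial$-module $(L\tilde A,\nabla_{L\tilde A})$, then perform a Galois descent from $K^\sharp_{L\tilde A}$ down to $K$ so as to bring Manin-Chai into play, and finally translate the descended object using the Manin-Chai dictionary.

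First, I would apply Fact 2.2(ii) directly to the linear inhomogeneous equation $\nabla_{L\tilde A}(x)=b$ on the commutative $\partial$-group $L\tilde A$. Writing $K^\sharp$ for $K^\sharp_{L\tilde A}$, the connected component of the Galois group of $K^\sharp(x)/K^\sharp$ takes the form $H^\partial(K^{diff})$, where $H$ is the smallest $\partial$-submodule of $L\tilde A$, \emph{a priori} defined only over $K^\sharp$, such that
$$b\in H+\nabla_{L\tilde A}(L\tilde A(K^\sharp)).$$
The $\Q$-tensoring allowed by 2.2(ii) is harmless here, since $K$, and hence $K^\sharp$, is algebraically closed and $L\tilde A$ is a vector group.

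Next, I would descend $H$ to $K$. The crucial input is that $K^\sharp_{L\tilde A}/K$ is a Picard-Vessiot extension with \emph{semisimple} differential Galois group --- the linear-algebraic analogue of Remark 3.2, ultimately coming from Chai's theorem on the Gauss-Manin connection attached to $\tilde A$, which is the same ingredient powering the strongest form $(2^*)$ of Manin-Chai used in \cite{Bertrand-descent}, \S 4.1. Since $b$ is $K$-rational and $\nabla_{L\tilde A}$ is defined over $K$, the condition characterizing $H$ is stable under $Aut(K^\sharp/K)$; by uniqueness of the minimal $H$, semisimplicity then forces $H$ to be pointwise fixed, i.e.\ defined over $K$.

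Finally, I would identify $H$ using Manin-Chai. The statement $(2^*)$ of \cite{Bertrand-descent}, \S 4.1, asserts precisely that for $y\in\tilde A(K)$ with image $\bar y\in A(K)$, the class of $b=\partial\ell n_{\tilde A}(y)$ in $L\tilde A(K)/\nabla_{L\tilde A}(L\tilde A(K))$ belongs to the image of $L\tilde C(K)$, for an abelian subvariety $C\subseteq A$, if and only if some nonzero integer multiple of $\bar y$ lies in $C+A_0(\C)$. Combined with the minimality of $H$, this forces $H=L\tilde B$, yielding the claimed Galois group $(L\tilde B)^\partial(K^{diff})$ and $tr.deg(K^\sharp(x)/K^\sharp)=\dim L\tilde B=2\dim B$. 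The main obstacle is the descent step: everything rests on the semisimplicity of $Aut(K^\sharp_{L\tilde A}/K)$ together with the uniqueness built into 2.2(ii); once the descent is in place, the rest is a dictionary translation of Manin-Chai.
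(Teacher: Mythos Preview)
The paper does not give its own proof of Fact 5.1; it is quoted as Theorem 4.3 of \cite{Bertrand-descent}. Your outline matches the strategy of that reference, but the descent step is misplaced, and this leaves a real gap.

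That $H$ is defined over $K$ follows from uniqueness alone, with no appeal to semisimplicity: since $b\in L\tilde A(K)$ and $\nabla$ is over $K$, any $\sigma\in Aut(K^\sharp/K)$ carries $H$ to another minimal witness, hence $\sigma(H)=H$ setwise and $H$ descends (this is the model-theoretic Claim in the proof of Proposition 2.5). What you have \emph{not} established is that $b\in H+\nabla(L\tilde A(K))$ rather than merely $b\in H+\nabla(L\tilde A(K^\sharp))$; and Manin--Chai $(2^*)$ is a statement about the cokernel of $\nabla$ on $K$-points, so you cannot invoke it until this gauge descent is done. This is precisely where semisimplicity of $Aut(K^\sharp_{L\tilde A}/K)$ enters, via the cocycle argument of \cite{Bertrand-unipotent} (mirrored in Case (I) of the proof of 2.5 here): writing $b=h+\nabla(z)$ with $z\in L\tilde A(K^\sharp)$, the map $\sigma\mapsto\sigma(z)-z\bmod H$ is a cocycle from the semisimple group into the $\C$-representation $(L\tilde A/H)^\partial$, hence a coboundary by complete reducibility, and unwinding produces a $K$-rational gauge. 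Your text puts semisimplicity to work on the wrong object and then applies $(2^*)$ prematurely.

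A secondary point: once the gauge is descended, the minimal $\partial$-submodule $H\subseteq L\tilde A$ over $K$ with $b\in H+\nabla(L\tilde A(K))$ is not \emph{a priori} of the form $L\tilde C$ for an abelian subvariety $C$ (think of $U_A$), so $(2^*)$ does not immediately read off $H=L\tilde B$. One containment is easy, but the other needs an extra step (pass to a maximal proper $\partial$-overgroup and argue on the simple quotient, as in Case (II) of the proof of 2.5) rather than a direct dictionary lookup.
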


\subsubsection*{Proof of Theorem 2.13} 
Here, $A$ has $\C$-trace $0$. 
By assumption we have $y_{1},..,y_{n}\in A(K)$ and $a_{1},..,a_{n}\in \C$, not all $0$ such that 
$a_{1}\mu_{A}(y_{1}) + ... + a_{n}\mu_{A}(y_{n}) = 0$ in $L\tilde A(K)/\partial\ell n_{\tilde A}(W_{A})$. Lifting $y_{i}$ to $\tilde y_{i} \in \tilde 
A(K)$, we derive that
$$ a_{1}\partial\ell n_{\tilde A}(\tilde y_{1}) + .... + a_{n}\partial\ell n_{\tilde A}(\tilde y_{n}) = \partial\ell n_{\tilde A}(z)$$
for some $z\in W_{A}$.   Via our identification of $W_{A}$ with $LW_{A}$ we write the right hand side as $\nabla_{L\tilde A}z$ with $z\in 
LW_{A}\subset L\tilde A$.  Let $\tilde x_{i}\in L\tilde A$ be such that $\nabla_{L\tilde A}(\tilde x_{i}) = \partial\ell n_{\tilde A}(\tilde y_{i})$. 
Hence $a_{1}\tilde x_{1} + ... + a_{n}\tilde x_{n} - z \in (L\tilde A)^{\partial}$, and there exists $d\in (L\tilde A)^{\partial}$ such that
$$a_{1}\tilde x_{1} + ... + a_{n}\tilde x_{n} - d = z  \in LW_A .$$
Suppose for a contradiction that $y_{1},..,y_{n}$ are linearly independent with respect to $End(A)$. Then  no  multiple of $y = (y_{1},..,y_{n})$ by a nonzero integer lies in any  proper abelian subvariety $B$ of the traceless abelian variety $A^{n} = A\times .. \times A$.  By Fact 5.1, 
$tr.deg(K^\sharp(\tilde x_{1},..,\tilde x_{n})/K^\sharp) = dim(\tilde A^n)$, where we have set  $K^\sharp := K^{\sharp}_{L\tilde{A^{n}}} = K^{\sharp}_{L\tilde A}$. 
So $\tilde x_{1},..,\tilde x_{n}$ are generic independent, over $K^\sharp$, points of $L\tilde A$. Hence, as $a_{1},..,a_{n}$ are in $\C$ so in $K^\sharp$, $a_{1}\tilde x_{1} + ... + a_{n}\tilde x_{n}$ is a generic point of $L\tilde A$ over $K^\sharp$. And as $d$ is a $K^\sharp$-rational point of $(L\tilde A)^{\partial}$,  $a_{1}\tilde x_{1} + .. + a_{n}\tilde x_{n} - d = z$ too is a generic point of $L \tilde A$ over $K^\sharp$, so cannot lie in its strict subspace $LW_A$. This contradiction concludes the  proof of 2.13. 

\subsubsection*{Proof of Proposition 2.15.} 
 We use the same notation as at the end of Section 2.4,  and recall that $A$ is traceless. Furthermore, the functoriality of $M_{K,A}$ in $A$ allows us to assume that $A$ is a simple abelian variety. 

\vspace{2mm}
\noindent
{\it Step (I).} We show as in the proof of 2.13 above that if $M_{K,A}(y_{1}),..,M_{K,A}(y_{n})$ are $\C$-linearly dependent, then $y_{1},..,y_{n}$ are $End(A)$-linearly dependent. 
Indeed, assume that $a_{i}\in \C$ are not all $0$ and that  $a_{1}M_{K,A}(y_{1}) +  .. + a_{n}M_{K,A}(y_{n}) = 0$ in  the target space  $L\tilde A(K)/\nabla(L\tilde A(K))$. 
Lifting $y_{i}$ to $\tilde y_{i}\in \tilde A(K)$, we derive that
$$a_{1}\partial\ell n_{\tilde A}(\tilde y_{1}) + .... + a_{n}\partial\ell n_{\tilde A}(y_{n}) \in \nabla(L\tilde A (K))$$
Letting $\tilde x_{i}\in L\tilde A(K^{diff})$ be such that $\nabla\tilde x_{i} = \partial\ell n_{\tilde A}(\tilde y_{i})$, we obtain a $K$-rational point $z\in L\tilde A(K)$ such that
$$    a_{1}\tilde x_{1} + ... + a_{n}\tilde x_{n} - z := d \in (L\tilde A)^{\partial}(K^{diff}).$$
Taking $K^\sharp := K^\sharp_{L\tilde A}$ as in the proof of 2.13, we get $tr.deg(K^\sharp(\tilde x_{1},..,\tilde x_{n})/K^\sharp) < dim({\tilde A}^{n})$. Hence by Fact 5.1,
 some integral multiple of  $(y_{1},..,y_{n})$ lies in a proper abelian
subvariety of $A^{n}$, whereby $y_{1},..,y_{n}$ are $End(A)$-linearly dependent. 

\vspace{2mm}
\noindent
{\it Step (II).}  Assuming that $y_{1},..,y_{n}$ are $End(A)$-linearly dependent, given by Step (I), as well as the relation on the point $d$ above with not all $a_{i}= 0$, we will show that the points $y_{i}$ are $\Z$-linearly dependent.  Equivalently we will show that if a similar relation holds with the $a_{i}$ linearly independent over $\Z$, then $y =(y_{1},..,y_{n})$ is a torsion point of $A^n$.
Let $\tilde x = (\tilde x_{1},...,\tilde x_{n})$.  Let $B$ be the connected component of the 
Zariski closure of the
group $\Z\cdot y$ of multiples of $y$ in $A^{n}$.  By Fact 5.1,  the  differential  Galois group of $K^\sharp(\tilde x)$ over $K^\sharp := K^\sharp_{L\tilde A}$ is 
$(L\tilde B)^\partial$. More precisely, the set of $\sigma(\tilde x) - \tilde x$ as $\sigma$ varies in $Aut_\partial(K^\sharp(\tilde x)/K^\sharp)$ is precisely $(L\tilde B)^{\partial} \subseteq (L\tilde A^{n})^{\partial}$.  Since $z$ and $d$ are defined over $K^\sharp$,  the relation on $d$ implies that
$$ \forall (\tilde c_{1},..,\tilde c_{n})\in (L\tilde B)^{\partial}, a_{1}\tilde c_{1} + ... + a_{n}\tilde c_{n} = 0. $$
Let now
$${\cal B}   = \{\alpha = (\alpha_{1},..,\alpha_{n}) \in (End(A))^{n} = Hom(A, A^n):  \alpha(A) \subseteq B \subset A^{n}\}.$$ 

\vspace{2mm}
\noindent
{\em Claim.} Assume that $a_1, ..., a_n$ are linearly independent over $\Z$. Then, any $\alpha\in {\cal B}$ is identically $0$.

\vspace{2mm}
\noindent
It follows from the claim that $B = 0$ and hence some multiple of $y$ by a nonzero integer vanishes, namely $y$ is a torsion point of $A^{n}$. This completes the proof of Step (II), hence of Proposition 2.15, and we are now reduced to proving the claim. 

\vspace{2mm}
\noindent
{\it Proof of claim.} Since $A$ is simple, $End(A)$ is an order in a simple algebra $D$ over $\Q$. For $i=1,..,n$, denote by $\rho(\alpha_{i})$ the $\C$-linear map induced on $(L\tilde A)^{\partial}$ by the endomorphism $\alpha_{i}$ of $A$. So we view $(L\tilde A)^{\partial}$ as a complex representation, of degree $2dimA$,  of the $\Z$-algebra $End(A)$, or more generally, of $D$. Let $f^2$ be the dimension of $D$  over its centre $F$,  let $e$ be the degree of $F$ over $\Q$ and let $R$ be a reduced representation of $D$, viewed as a complex representation of degree $ef$. As the representation $\rho$ is defined over $\Q$
(since it preserves the Betti homology), 
$\rho$ is equivalent to the direct sum $R^{\oplus r}$ of  $r = 2dimA/ef$ copies of $R$ (cf. \cite{Sh-Ta}, \S 5.1). Furthermore,  $R : D \to Mat_f(F \otimes \C) \simeq (Mat_{f}(\C))^{e} \subset Mat_{ef}(\C)$ extends by $\C$-linearity  to an injection $R \otimes 1 : D\otimes \C \simeq (Mat_{f}(\C))^{e} \subset Mat_{ef}(\C)$.

\vspace{2mm}
Recall now that for any $(\tilde c_1, ..., \tilde c_n)$ in $(L\tilde B)^\partial$, $a_1 \tilde c_1 + ... + a_n \tilde c_n = 0$. Applied to the image under $\alpha = (\alpha_1, ..., \alpha_n) \in {\cal B}$ of the generic element of $(L\tilde A)^{\partial}$, this relation implies that
$$ a_{1}\rho(\alpha_{1}) + ... + a_{n}\rho(\alpha_{n}) = 0 \in End_{\C}((L\tilde A)^{\partial})$$
So $a_{1}R(\alpha_{1}) + .. + a_{n}R(\alpha_{n}) = 0$ in $(Mat_{f}(\C))^{e}$.  From the injectivity of $R \otimes 1$ on $D\otimes \C$ and the $\Z$-linear independence  of the $a_{i}$, we derive that each $\alpha_{i}\in D$ vanishes, hence $\alpha = 0$, proving the claim.

\subsection{A counterexample}
We conclude with the promised counterexample to the injectivity of $\mu_A\otimes 1$, namely 
Proposition 2.14.

\subsubsection*{Construction of $A$}
We will use Yves Andr\'e's example of a simple traceless abelian variety $A$ over $\C(t)^{alg}$ with $0 \neq U_A \subsetneq W_A$, cf. \cite{LW}, just before Remark 3.10. Since $U_A \neq W_A$, this $A$ is not constant, but we will derive this property 
and the simplicity of $A$ from another argument, borrowed from  \cite{LB}, IX.6. 

\vspace{2mm}

We start with a CM field $F$ of degree $2k$ over $\Q$, over a totally real number field $F_0$ of degree $k \geq 2$, and denote by $\{\sigma_1, \bar \sigma_1, ..., \sigma_k, \bar \sigma_k\}$ the complex embeddings of $F$. We further fix the CM type $S := \{\sigma_1, \bar \sigma_1, 2\sigma_2, ..., 2 \sigma_k\}$. By \cite{LB}, IX.6, we can attach to $S$ and to any $\tau \in {\cal H}$ (the Poincar\'e half-plane, or equivalently, the open unit disk)  an abelian variety $A = A_\tau$ of dimension $g = 2k$ and an embedding of $F$ into $End(A)\otimes \Q$ such that the representation $r$ of $F$ on $W_A$ is given by the type $S$. The representation $\rho$ of $F$ on $L\tilde A$ is then  $r \oplus \bar r$, equivalent to twice the regular representation. (The notations used by \cite{LB} here read~:  $e_0 = k, d = 1, m = 2, r_1 = s_1 = 1, r_2 = .. = r_{e_0} = 2, s_2 = ... = s_{e_0} = 0$, so, the product of the  ${\cal H}_{r_i, s_i}$  of {\it loc. cit.} is just $\cal H$. Also, \cite{LB} considers the more standard ``analytic" representation of $F$ on the Lie algebra $LA = L\tilde A/W_A$, which is $\bar r$ in our notation.)

From the bottom of \cite{LB}, p. 271, one infers that the moduli space of such abelian varieties $A_\tau$ is an analytic curve  ${\cal H}/\Gamma$. But  Shimura has shown that  it can be compactified to an algebraic curve $\cal X$, cf \cite{LB}, p. 247.  So, we can view the universal abelian variety  $A_\tau = A$ of this moduli space as an abelian variety   over $\C({\cal X})$, hence as an abelian variety $A$ over $K= \C(t)^{alg}$. This will be our $A$ : it is by construction not constant - and it is a fourfold if  we  take, as we will in what follows,  $k = 2$.

Finally,  since $A$ is the general element over ${\cal H}/\Gamma$,  Theorem 9.1 of  \cite{LB} and the hypothesis $k \geq 2$ imply that $End(A) \otimes \Q$ is {\it equal} to $F$. Therefore, $A$ is a simple abelian variety, necessarily traceless since it is not constant.  We denote by $\cal O$ the order $End(A)$ of $F$.

\subsubsection*{Action of $F$ and of $\nabla$ on $L \tilde A$}
For simplicity, we will now restrict to the case $k = 2$, but the general case (requiring $2k$ points) would work in exactly the same way. So, $F$ is a totally imaginary quadratic extension of a real quadratic field $F_0$, and $L \tilde A$ is 8-dimensional. As said in \cite{LW}, and by definition of the CM-type $S$, the action $\rho$ of $F$ splits $L \tilde A$ into eigen-spaces for its irreducible representations $\sigma$'s, as follows : 

\smallskip

 -  $W_A = D_{\sigma_1} \oplus  D_{\bar \sigma_1} \oplus P_{\sigma_2}$, where the $D$'s are lines , and  $P_{\sigma_2}$ is a plane;
 
 \smallskip
 
 - $LA$ lifts to $L \tilde A$ into $D'_{\sigma_1} \oplus  D'_{\bar \sigma_1} \oplus P_{\bar \sigma_2}$, with same notations.
 
 \medskip
 
Since $\nabla := \nabla_{L\tilde A} = \partial_{L\tilde A}$ commutes with the action $\rho$ of $F$ and since $A$ is not constant, we infer that the maximal $\partial$-submodule of $W_A$ is 
$$U_A = P_{\sigma_2} ~,$$
while $W_A + \nabla(W_A) = \Pi_{\sigma_1}�\oplus  U_A \oplus \Pi_{\bar \sigma_1}$, with planes $\Pi_{\sigma_1} = D_{\sigma_1}�\oplus D'_{\sigma_1}, \Pi_{\bar \sigma_1} = D_{\bar \sigma_1}�\oplus D'_{\bar \sigma_1} $, each stable under $\nabla$ (just as is $P_{\bar \sigma_2}$, of course). In fact, for our proof, we  only need to know that $P_{\sigma_2} \subset U_A$. �

\medskip
Now, let $\tilde y \in \tilde A(K)$ be  a lift of a point $y \in A(K)$. Going into a complex analytic setting, we choose a logarithm $\tilde x \in L \tilde A(K^{diff})$ of $\tilde y$, locally analytic on a small disk in ${\cal X}(\C)$. Let further $\alpha \in {\cal O}$, which canonically lifts to $End(\tilde A)$. Then, $\rho(\alpha) \tilde x$ is a logarithm of $\alpha. \tilde y \in \tilde A(K)$, and thefore satisfies 
$$\nabla (\rho(\alpha) \tilde x) = \partial \ell n_{\tilde A}(\alpha . \tilde y).$$
In fact, this appeal to analysis is not  necessary : the formula just says that $\partial \ell n_{\tilde A}$ (and $\nabla$) commutes with the actions of $\cal O$. But once one $\tilde y$ and one $\tilde x$ are chosen, it will be crucial for the searched-for  relation $({\frak R})$ following Proposition 2.14  that we take these $\rho(\alpha) \tilde x$ as solutions to the equations on the $\cal O$-orbit of $\tilde y$.

\medskip
Concretely, if
$$ \tilde x = x_{\sigma_2} \oplus  x_{\sigma_1}�\oplus x_{\bar \sigma_1}�\oplus x_{\bar \sigma_2}$$
is the decomposition of $\tilde x $ in  $L \tilde A =  P_{\sigma_2 } \oplus \Pi_{\sigma_1} \oplus \Pi_{\bar \sigma_1}   \oplus P_{\bar \sigma_2}$, then  for any $\alpha \in {\cal O}$, we have
$$\rho(\alpha)(\tilde x) =  \sigma_2(\alpha) x_{\sigma_2} \oplus  \sigma_1(\alpha) x_{\sigma_1} \oplus \bar  \sigma_1(\alpha) x_{\bar \sigma_1} \oplus   \bar \sigma_2(\alpha) x_{\bar \sigma_2} .$$

\subsubsection*{Conclusion}

\medskip
\noindent
Let $y \in A(K)$ be a non torsion point of the simple abelian variety $A$, for which we choose at will a lift $\tilde y$ to $\tilde A(K)$ and  a logarithm $\tilde x \in L\tilde A(K^{diff})$. Let $\{\alpha_1, ..., \alpha_4\}$ be an integral basis of $F$ over $\Q$. We will consider the 4 points $y_i = \alpha_i. y$ of $A(K)$, $i = 1, ..., 4$. Since the action of ${\cal O}$ on $A$ is faithful, they are linearly independent over $\Z$. For each $ i = 1, .., 4$,  we consider the lift  $\tilde y_i = \alpha_i \tilde y$ of $y_i$ to $L \tilde A(K)$, and set as above $\tilde x_i = \rho(\alpha_i) \tilde x$, which satisfies $\nabla(\tilde x_i) = \partial \ell n_{\tilde A} \tilde y_i$.  

\smallskip

We claim that there exist  complex numbers $a_1, .., a_4$, not all zero,  such that 
$$u := a_1 \tilde x_1 + ... + a_4 \tilde x_4 = \big(a_1 \rho(\alpha_1) + ... + a_4 \rho(\alpha_4)\big)({\tilde x})$$
 lies in $U_A(K^{diff})$, i.e. such that in the decomposition above of $L\tilde A =   P_{\sigma_2 } \oplus \Pi_{\sigma_1} \oplus \Pi_{\bar \sigma_1}   \oplus P_{\bar \sigma_2}$, the components of $u = u_{\sigma_2} \oplus  u_{\sigma_1}�\oplus u_{\bar \sigma_1}�\oplus u_{\bar \sigma_2}$ on the last three planes vanish.

\smallskip
The whole point is that the complex representation $\hat \sigma^{\oplus 2}$ of $F$ which $\rho$ induces on $ \Pi_{\sigma_1} \oplus \Pi_{\bar \sigma_1}   \oplus P_{\bar \sigma_2}$  is twice the representation $\hat \sigma  := \sigma_1 \oplus \bar \sigma_1 \oplus \bar \sigma_2$ of $F$ on $\C^3$, and so,  does not contain the full regular representation of $F$. More concretely, the 4 vectors $\hat \sigma(\alpha_1), ..., \hat \sigma(\alpha_4)$ of $\C^3$ are  of necessity linearly  dependent  over $\C$, so, there exists a non trivial linear relation 
$$a_1 \hat \sigma(\alpha_1)+ ... + a_4 \hat \sigma(\alpha_4) = 0 ~�{\rm in }�~ \C^3 $$
(where the complex numbers $a_i$ lie in the normal closure of $F$). Therefore, {\it any} element  $\tilde x_{\hat \sigma} =  ( x_{\sigma_1},    x_{\bar \sigma_1},  x_{\bar \sigma_2})$ of $ \Pi_{\sigma_1} \oplus \Pi_{\bar \sigma_1}   \oplus P_{\bar \sigma_2}$ satisfies :
$$(a_1 \hat \sigma^{\oplus 2}(\alpha_1)+ ... + a_4 \hat \sigma^{\oplus 2}(\alpha_4))\tilde x_{ \hat \sigma} = 0 ~�{\rm in }�~  \Pi_{\sigma_1} \oplus \Pi_{\bar \sigma_1}   \oplus P_{\bar \sigma_2}$$ 
(view each  $\hat \sigma^{\oplus 2}(\alpha_i)$  as a $(6 \times  6)$ diagonal matrice inside the $(8 \times 8)$ diagonal matrix $\rho(\alpha_i)$), i.e.  the  3 plane-components $u_{\sigma_1}, u_{\bar \sigma_1}, u_{\bar \sigma_2}$ of $u$ all vanish, and $u$ indeed lies in $P_{\sigma_2}$, 	and so in $U_A$. 

\vspace{2mm}

The existence of such a point $u =a_1 \tilde x_1 + ... + a_4 \tilde x_4$ in $U_A(K^{diff})$ establishes relation  $(\frak R)$ of \S 2.4, and concludes the proof of   Proposition 2.14.

\medskip
\noindent
Authors' addresses :

D.B. $<$daniel.bertrand@imj-prg.fr$>$

 A.P.  $<$A.Pillay@leeds.ac.uk$>$, $<$apillay@nd.edu$>$

\end{document}